\DeclareMathOperator{\intr}{int}
\DeclareMathOperator{\id}{id}
\DeclareMathOperator{\supp}{supp}
\DeclareMathOperator{\tors}{torsion}
\DeclareMathOperator{\lip}{lipschitz}
\newcommand{\x}{\times}
\newcommand{\<}{\left<}
\renewcommand{\>}{\right>}
\renewcommand{\epsilon}{\varepsilon}
\renewcommand{\phi}{\varphi}
\newcommand{\bd}{\partial}
\newcommand{\ra}{\rightarrow} 
\newcommand{\hra}{\hookrightarrow} 
\newcommand{\ura}{\underrightarrow}
\newcommand{\xra}{\xrightarrow}
\newcommand{\ula}{\underleftarrow}
\newcommand{\ol}{\overline} 
\newcommand{\wh}{\widehat} 
\newcommand{\trb}{\bd_{\pitchfork}} 
\newcommand{\tr}{\pitchfork} 
\newcommand{\tb}{\bd_{\tau}}
\newcommand{\CO}{C^{0}} 
\newcommand{\COO}{C^{0+}}
\newcommand{\CI}{C^{1}}
\newcommand{\CII}{C^{2}}
\newcommand{\Ci}{C^{\infty}}
\newcommand{\SI}{S^{1}}
\newcommand{\CC}{\mathcal{C}} 
\newcommand{\FF}{\mathcal{F}}
\newcommand{\LL}{\mathcal{L}} 
\newcommand{\KK}{\mathcal{K}}
\newcommand{\EE}{\mathcal{E}} 
\newcommand{\II}{\mathcal{I}}
\newcommand{\BB}{\mathcal{B}} 
\newcommand{\HH}{\mathcal{H}} 
\newcommand{\GG}{\mathcal{G}} 
\newcommand{\UU}{\mathcal{U}} 
\newcommand{\DD}{\mathcal{D}}
\newcommand{\XX}{\mathcal{X}}
\newcommand{\ZZ}{\mathcal{Z}}
\newcommand{\T}{\mathfrak{T}} 
\newcommand{\C}{\mathfrak{C}} 
\newcommand{\Ck}{\C^{\kappa}}
\newcommand{\R}{\mathbb{R}}
\newcommand{\Z}{\mathbb{Z}}
\newcommand{\Varphi}{\Phi}
\newcommand{\wt}{\widetilde} 
\newcommand{\0}{\emptyset} 
\newcommand{\hk}{H^{1}_{\kappa}}
\newcommand{\hI}{H^{1}}
\newcommand{\hi}{H_{1}}
\newcommand{\sm}{\smallsetminus} 
\newtheorem{theorem}{Theorem}[section]
\newtheorem{lemma}[theorem]{Lemma}
\newtheorem{cor}[theorem]{Corollary}
\newtheorem{prop}[theorem]{Proposition}
\theoremstyle{definition} 
\newtheorem{defn}[theorem]{Definition} 
\newtheorem{rem}{Remark}
 \newtheorem{example}{Example}
\newtheorem{ass}{Assumption}
\begin{document}

\title{Open Saturated Sets Without Holonomy}

\author[J. Cantwell]{John Cantwell}
\address{Department of Mathematics\\ St. Louis University\\ St. Louis, MO 
63103}
\email{CANTWELLJC@SLU.EDU}

\author[L. Conlon]{Lawrence Conlon}
\address{Department of Mathematics\\ Washington University, St. Louis, MO 
63130}
\email{CONLONLAWRENCE@ICLOUD.COM}

\subjclass{57R30;37D99,37E30,57R58}
\keywords{holonomy, cone, monodromy, endperiodic, current, asymptotic cycle, homology direction, Handel-Miller monodromy}

\begin{abstract}
Open, connected, saturated sets $W$ without holonomy in codimension one foliations $\FF$ play key roles as fundamental building blocks.  A great deal is understood about the structure of $\FF|W$, but the possibilities have not been quantified.  Here, for the case of foliated 3-manifolds, we produce a finite system of closed, convex, non-overlapping polyhedral cones in $\hI(W)$ (real coefficients) such that the isotopy classes of possible foliations $\FF|W$ without holonomy correspond one-to-one to the rays in the interiors of these cones.  This   generalizes our  classification of depth one  foliations to foliations of finite depth and more general foliations.
\end{abstract}

\maketitle

\section{Introduction}

 W.~\!\!Thurston shows that fibrations $\pi:M \to \SI$, where $M$ is a compact $3$-manifold,  are in one-one corresponance, up to isotopy, with rays through the nontrivial lattice points in the open cones  over certain top dimensional faces  of the unit ball  of the Thurston norm in $\hI(M;\R)$.    In~\cite{cc:cone}, we produced closed polyhedral cones in  $\hI(M;\R)$, where $(M,\gamma)$ is a taut sutured 3-manifold, such that the depth-one  foliations of $M$  having the components of $R(\gamma)$ as sole compact leaves are in one-one corresponance, up to isotopy, with rays through the nontrivial lattice points (called rational rays) in the interiors of these  cones. The cones have disjoint interiors which are open, convex, polyhedral cones. In~\cite{cc:almostnohol}, we correct some errors in~\cite{cc:cone} and further show that the dense leaved foliations having the components of $R(\gamma)$ as sole compact leaves and sole leaves with holonomy are in one-one corresponance, up to isotopy, with the irrationsl rays in these open cones.

In this paper, we apply the methods of~\cite{cc:cone} and~\cite{cc:almostnohol}, suitably modified,  to  open, connected, $\FF$-saturated sets without holonomy in tautly foliated 3-manifolds $(M,\FF)$.   Typically a finite depth foliation has finitely many leaves with holonomy, the complement of which are finitely many open, connected, $\FF$-saturated sets without holonomy. Thus, our results extend the results of \cite{th:norm}, \cite{cc:cone}, and~\cite{cc:almostnohol} to  finite depth foliations of $3$-manifolds and, more generally, to other taut  foliations.
In particular, our results apply to some foliations with exceptional minimal sets. For example,

\begin{example}

Example~4.1.6 in~\cite{condel1} is a foliation $\FF$ of a compact $3$-manifold with a unique exceptional minimal set. The complement of this minimal set is an  open, connected, $\FF$-saturated set $W$ without holonomy to which our results apply. Note that every leaf in the set $W$ has a Cantor set of ends.

\end{example}

\section{Definitions, assumptions, and results}

Let $(M,\FF)$ be a compact, connected, transversely oriented, foliated $n$-manifold of codimension~$1$ and class $\COO$.  That is, the leaves are $\CI$ and the tangent planes to the leaves form a topological vector bundle over $M$. Less formally, $\FF$ is integral to a $\CO$ plane field. Such a foliation admits a transverse, $1$-dimensional foliation $\LL$ of class $\Ci$.  The manifold $M$ need not be oriented, hence leaves may fail to be orientable.  We assume throughout that $(M,\FF)$ is not a foliated product.

\begin{rem}
For foliations $\GG$ of arbitrary codimension~$q$ on $M$, there is always to be a decomposition $\bd M=\tb M\cup\trb M$, where the tangential boundary $\tb M$ is a union of leaves of $\GG$ and the transverse boundary $\trb M$ is transverse to $\GG$. Both, of course, are  $(n-1)$-dimensional. Thus, $\GG|\trb M$ is a foliation of leaf dimension $n-q-1$ and $\GG|\tb M$ is a foliation of leaf dimension $n-q$.  If $\tb M\cap\trb M\ne\0$, this intersection is to consist of corners which are convex with respect to $\GG$.  In this paper, $\tb M$ and $\trb M$ will always refer to the codimension~$1$ foliation $\FF$.  Thus, each component of $\tb M$ will be a compact leaf of $\FF$.  When we have cause to consider the tangential and transverse boundary for higher codimnsion foliations (typically $1$-dimensional foliations $\LL$) we will use the words, not the symbols $\tb M$ and $\trb M$.
\end{rem}

If $\dim M = 3$,  the transverse boundary $\trb M$ has  components  which are annuli, tori and/or Klein bottles. (M\"obius strips are ruled out by the transverse orientability of $\FF$.) We require that $\FF$ be \emph{taut}, meaning that each leaf meets either a closed transversal to $\FF$ or a transverse, properly imbedded arc with endpoints in $\tb M$. 
We also require that $\FF$ be \emph{boundary taut}, meaning that the induced $1$-dimensional foliation $\FF|\trb M$ is to have no $2$-dimensional Reeb components.  The annular components of $\trb M$ meet $\tb M$ in convex corners. Except for the possible nonorientability, $M$ is a sutured manifold in the sense of D.~Gabai~\cite{ga0}. In the standard notation for sutured manifolds $(M,\gamma)$, $\gamma=\trb M$ and $R(\gamma)=\tb M$.

\subsection{The transverse completion}

Let $W\subset M$ be an open, connected, $\FF$-saturated set. It will be useful to consider the transverse completion $\wh W$ of $W$.  This is also called the metric completion, being the completion of $W$ in any Riemannian metric  on $W$ relativized from a Riemannian metric on $M$.  For the properties of $\wh W$ summarized here, see~\cite[Section~5.2]{condel1}.  This completion adds finitely many boundary leaves to $W$.  Two such boundary leaves may be identical in $M$, but distinct as leaves in $\tb\wh W$.  Their union in $M$, denoted by $\delta W$, is called the border of $W$.   The extension $\wh\FF$ of $\FF|W$ to $\wh W$ is obtained by adding on the components of $\tb\wh W$ as leaves.  We again use the notation $\bd\wh W=\tb\wh W\cup\trb\wh W$.  When $\dim M=3$, $\trb \wh W\subset\trb M$  is a  union of annuli, tori, Klein bottles and/or (possibly infinitely many) infinite rectangular strips.  If $\dim M=3$ the tautness of $\FF$ implies that $\wh\FF$ is taut and boundary tautness for $\wh\FF$ follows from that of $\FF$.

\subsection{Foliations of $W$ without holonomy} 

We are interested in the case that $\FF|W$ is without holonomy.   If $\FF$ is of class $\CII$ (class $C^{1+\lip}$ suffices), there are only two possibilities, either $\FF|W$ fibers $W$ over $\SI$ or each leaf of $\FF|W$ is dense in $W$~\cite[Theorem~9.1.4]{condel1}. But if the smoothness class is $\CI$ or less, one could have exceptional minimal sets of Denjoy type.

Since, generally, we only require $\FF$ to be of class $\COO$, we place the following definition. 

\begin{defn}
If the foliation $\FF|W$  fibers $W$ over $\SI$, we say that $\wh\FF$ is of \emph{relative depth one}.  More generally, if $\FF|W$ is without holonomy and is either dense leaved or fibers $W$ over $\SI$, we say that $\wh\FF$ is \emph{relatively almost without holonomy}. 
\end{defn}

We will be interested in  varying the foliation of $W$ through a family of foliations $\HH$ which extend to foliations $\wh\HH$ of $\wh W$, relatively almost without holonomy.  Thus, $\HH$ extends $\FF|(M\sm W)$ to a $\COO$ foliation of $M$.  But even if $\FF$ is $\Ci$ and  $\wh\HH$ is $\Ci$, $\HH$ may only extend $\FF|(M\sm W)$ to a $\COO$ foliation of $M$.  

\subsection{The octopus decomposition}

The manifold $\wh W$ admits an ``octopus decomposition'' into a compact nucleus and finitely many arms that are foliated interval bundles~\cite[Definition~5.2.13]{condel1}.  In the case that $\FF$ is $\Ci$ and $\wh\FF$ is of relative depth one,  $\FF$ is a product foliation in the arms of an octopus decomposition with suitably large nucleus (compactness of the junctures, cf.~\cite[Theorem~8.1.26]{condel1}).  We say that the foliation is \emph{trivial in the arms}. 

We will need to require that the foliations $\wh\HH$ of $\wh W$ admit a smooth \emph{strongly transverse} $1$-dimensional foliation $\wh\LL$ in the following sense.

\begin{defn}
A $1$-dimensional foliation $\wh \LL$ of $\wh W$  is \emph{strongly transverse} to the codimension~$1$ foliation $\wh\HH$ of $\wh W$ (and vice versa) if 

\begin{enumerate}

\item $\wh\LL$ is of class $\Ci$\upn{;}\label{itemone}

\item   $\trb\wh W$ is the tangential boundary for $\wh\LL$ and  $\tb\wh W$ is the transverse boundary for $\wh\LL$\upn{;}\label{itemtwo} 

\item $\wh\LL$ and $\wh\HH$ are transverse\upn{;}\label{itemthree}

\item in the arms $A_{i}$ of some octopus decomposition of $\wh W$, $\wh\LL|A_{i}$ defines a trivial fibration by compact intervals.\label{itemfour}

\end{enumerate}

\end{defn}

\begin{rem}
Any $1$-dimensional foliation $\LL$ of $M$ which is  transverse to $\FF$ induces such a foliation $\wh\LL$ of $\wh W$ transverse to $\wh\FF$.  The main point of strong tranversality is that the structure of $\wh\LL$ in the arms of the octopus prevents an end of a leaf of $\wh\LL$ from ``scooting off'' to infinity without accumulating anywhere in $\wh W$.
\end{rem}

\subsection{Distinction between the $\COO$ case and the $\CII$ case}\label{distinctC2}

Remark that the inclusion $W\hra\wh W$ is a homotopy equivalence, hence $\hI(W)=\hI(\wh{W})$.  Let $\hk(\wh{W})\subset\hI(\wh{W})$ be the subspace of classes represented by compactly supported cocycles.  All of these spaces are generally infinite dimensional, but carry natural topologies.  Unless otherwise indicated, homology and cohomology are computed with real coefficients.

If $\FF$  is a $\CII$ foliation on $M$, it is well known that $\FF|W$ will be trivial (that is a product foliation) in the arms of some octopus decomposition of $W$. In this case we work in the cohomology group $\hk(\wh{W})$. In the $\COO$ case this is not true and we work in $\hI(\wh{W})$.

\subsection{Proof of main results}

The proof of our main results involves three very different parts. The first part is developed in Section~\ref{SchSul}, uses the Schwartzmann-Sullivan theory of  asymptotic cycles and is valid for  $\dim M\ge 3$ with the above assumptions on $M$.   The second part invloves the Handel-Miller theory of endperiodic automorphisms which is only valid for $\dim M = 3$, is developed in Sections~\ref{HMcones} and~\ref{HMcones1}, and requires additional restrictions on the topology of the leaves. The third part we leave to a separate paper~\cite{cc:isorel} (see the remark after the statements of Theorems~\ref{cone} and~\ref{conesmooth}).

\subsection{Restrictions on the topology of the leaves}\label{restrict}

In Sections~\ref{HMcones} and~\ref{HMcones1} we require that $\dim M=3$. We further require  that \textbf{all compact leaves of $\FF$ have strictly negative Euler characteristic}.  Disks, spheres and projective planes are ruled out due to Reeb stability.  Annuli, M\"obius strips, tori and Klein bottles are ruled out to guarantee the following.  
\begin{enumerate}
\item The foliation is taut. 
\item If the foliation is smooth of class  $\Ci$, no semiproper leaf  has a simple end.
\end{enumerate}
A simple end is one with a neighborhood homeomorphic to $J\x[0,\infty)$, $J=I$ or $J=\SI$.  The fact that Property~(1) follows from the restriction on the leaves is essentially a theorem of S.~Goodman~\cite{sue:closed} and~\cite[Theorem~6.3.5]{condel1}, extended in a fairly straightforward way  to manifolds with boundary and corners. Property~(2) will be proven in this paper (Proposition~\ref{nosimple}).   For foliations of class $\COO$, we need in addition  the following very weak restriction on the topology of the leaves.

\begin{ass}
No semiproper leaf is contractible nor has the homotopy type of the circle.
\end{ass}

For $\Ci$ foliations, this is automatic by (2).   All of this is needed for the Handel-Miller theory of endperiodic automorphisms of surfaces~\cite{cc:hm}.

\subsection{The main results}

We assume that $\dim M = 3$ and that $M$ satisfies all the above assumptions. We make the following definition to simplify the statements of Theorems~\ref{cone} and~\ref{conesmooth}.

\begin{defn}
A $\COO$ foliation $\wh\HH$ of $\wh W$  is \emph{admissible} if it admits a strongly transverse $1$-dimensional foliation  $\wh\LL$ and is either of relative depth~$1$ or is dense leaved without holonomy in $W$.    
\end{defn}

Here is our main result in the $\COO$ case. The proof is given in Section~\ref{HMcones1}.

\begin{theorem}\label{cone}
There is a finite set of   closed, convex, polyhedral cones $\C_{1},\C_{2},\dots,\C_{r}\subset\hI(\wh{W})$ with disjoint interiors, each with finitely many faces of codimension one,  such that the $\CO$ isotopy classes  of admissible foliations $\wh\HH$ of $\wh W$  are in natural one-to-one correspondence with the rays out of the origin in the interiors of these cones. 
\end{theorem}

Here is our main result in the $\CII$ case (see Section~\ref{distinctC2}). The proof is given in Section~\ref{HMcones}.

\begin{theorem}\label{conesmooth} 
There is a finite set of  closed, convex, polyhedral cones $\C^{\kappa}_{1},\C^{\kappa}_{2},\dots,\C^{\kappa}_{r}\subset\hk(\wh{W})$  with disjoint interiors, each with finitely many faces of codimension one,  such that the $\CO$ isotopy classes of admissible foliations $\wh\HH$ of $\wh W$ that are  trivial in the arms are in natural one-to-one correspondence with the rays out of the origin in the interiors of these cones.   
\end{theorem}

 \begin{defn}\label{folnconedef}
The rays in the interiors of these cones will be called \emph{foliated rays}. The foliated rays meeting  nontrivial elements of the integer lattice of the cohomology groups are called \emph{rational foliated rays}.  The foliated rays not meeting any  nontrivial elements of the integer lattice of the cohomology groups are called \emph{irrational foliated rays}.
\end{defn}

\begin{rem}
The rational rays  correspond to the foliations of relative depth one. The irrational rays  correspond to the foliations that are dense leaved without holonomy in $W$.     
\end{rem}

\begin{rem}
 The isotopies in both theorems fix $\tb\wh W$ pointwise.
 \end{rem}

\begin{rem}

The final part of the proof of both theorems involves showing that if two admissible foliations of $\wh W$ are defined by the same  foliated ray then they are ambiently isotopic.  We do this in a separate paper~\cite{cc:isorel}.

\end{rem}

\section{Semiproper leaves}

In this section we prove a technical result mentioned in Section~\ref{restrict}. Here, we assume that the foliation is smooth of class $\Ci$  (although $\CII$ or $C^{1+\lip}$ is sufficient), satisfying the requirement that all compact leaves have negative Euler characteristic, and prove the following.  

\begin{prop}\label{nosimple}
If $\dim M=3$ and $L$ is a semiproper leaf, then $L$ has no simple end.
\end{prop}

\begin{proof}
 Semiproper leaves lie at finite levels~\cite[Lemma~8.3.23]{condel1}. We proceed by induction on the level of $L$.  If the level is 0, then $L$ semiproper implies either that $L$ is  compact or lies in an exceptional minimal set.  In the first case, the assertion is true by default and, in the second, it is true by the theorem of Duminy that the endset of $L$ is a Cantor set (unpublished by Duminy, but cf.~\cite{cc:dum}). Assume the assertion true for all semiproper leaves at levels $\le k-1$, $k\ge1$, and let $L$ be semiproper at level $k$.  Then either $L=X$ is proper, hence a local minimal set, or $L\subset X$ where $X$ is an exceptional local minimal set. In either case, there is an open, connected, saturated set $U\subset M$ such that $X$ is a minimal set of $\FF|U$.  Let $e$ be an end of $L$ which is not asymptotic to $L$.  By Duminy's theorem (for exceptional local minimal sets), a simple end must have this property.  Such an end must be asymptotic to a semiproper leaf $F$ at level $\le k-1$.  
 
 First suppose that $F\subset\delta U$ is a border leaf and that $e$ is asymptotic to $F$ from within $\wh U$.  Let $J\subset\wh U$ be an $\FF$-transverse arc with an endpoint $x\in F$ and let $V\subset L$ be an arbitrary neighborhood of $e$.  Then $V\cap J$ must be a sequence $\{x_{i}\}_{i=0}^{\infty}$ converging exactly to $x$.  By the Dippolito semi-stability theorem~\cite{dip}, \cite[Theorem~5.3.4]{condel1}, some loop $\sigma$ on $F$, based at $x$, has holonomy $h_{\sigma}$ such that $h_{\sigma}(x_{k})=x_{k+q}$, some $k\ge0$ and $q\ge1$.  It follows that $h_{\sigma}$ is a contraction and, by the $\Ci$ hypothesis, corresponds to a compact juncture on $F$~\cite[Theorem~8.1.26]{condel1}.  This reference assumes that $F$ is orientable, but cf.~\cite[Section~2.3]{cc:hm} for a way around this difficulty.  Thus, $V$ contains an ``infinite repetition'' of $F$.  Since $F$ has level $\le k-1$ and, if $k=1$, has negative Euler characteristic, $V$ must contain a neighborhood of a nonsimple end.  Since $V$ is an arbitrary neighborhood of $e$, the end $e$ is not simple.
 
 Alternatively, the arbitrary neighborhood $V\subset L$ of $e$ does not accumulate on a border leaf of $U$ from within $U$.  Since $e$ is not asymptotic to $L$, $V$ cannot be contained in a compact part of $\wh U$, hence must meet an arm $A$ of an ``octopus decomposition'' of $\wh U$~\cite[Definition~5.2.13]{condel1}.  Let $B\subset F\subset\tb\wh U$ be a submanifold such that $A=B\x I$ and let $x\in B$ such that $V$ meets $\{x\}\x I$ in at least one point $y$.  Then our hypotheses imply that every loop $\sigma$ in $B$ based at $x$ has total holonomy $h_{\sigma}:\{x\}\x I\to\{x\}\x I$ fixing $y$.  Thus $V$ contains a submanifold homeomorphic to $B$, hence it contains a neighborhood of a nonsimple end.  Since $V$ is arbitrary, $e$ is not a simple end.
\end{proof}

\section{Endperiodic monodromy}\label{endpmono}

 Here, we work in $n$-manifolds, $n\ge3$, without the restrictions on the topology of leaves of Section~\ref{restrict}. Let $\LL$ be a smooth 1-dimensional foliation of $M$ that is transverse to $\FF$ and is
 tangent to $\trb M$, is oriented by the transverse orientation of $\FF$, and fibers by intervals all  components of $\trb M$ that have boundary. (This, of course, restricts the topology of $\FF$. In 3-manifolds we have assumed that the induced foliation of $\trb M$ by $\FF$ has no Reeb components.) Let $W$ be an open, saturated set without holonomy. which is fibered by $\FF$ over $\SI$ and let $\wh\FF$ and $\wh\LL$ denote the foliations induced on $\wh W$ by $\FF$ and $\LL$, respectively.
  
 There is a  sublamination $\XX\subset\LL|\wh W$ consisting of those leaves that do not meet $\tb \wh W$.  This sublamination is a compact sublamination of $\LL|W$.  Indeed, $\XX$ is  closed in $W$ since every leaf of $\wh\LL$ that gets sufficiently close  to $\tb\wh W$ meets $\tb\wh W$.  Since the leaves of $\wh\LL$ in the arms of an octopus decomposition are properly imbedded, compact arcs, $\XX$ is confined to the compact nucleus of the decomposition, hence is compact.  We call $\XX$ the \emph{core lamination} of $\LL|W$.     
 
 We will assume that $\wh W$ is not a foliated product, hence the core lamination
is nonempty.  Indeed, if some leaf $\ell$ of $\LL$ issues from  one
component of $\tb \wh W$ but never reaches another component, the asymptote of
$\ell$ in $\wh W$ will be a nonempty subset of $\XX$.  Thus, if $\XX=\0$,
every leaf of $\LL$ issues from  one component of $\tb \wh W$ and terminates at another component, implying that $\wh W\cong F\times I$, where $F$ is a leaf of $\FF$.   In this case, we will see that Theorem~\ref{cone} is rather trivial, hence we will assume that $\XX\ne\0$. 

The foliation $\LL|W$ can be parametrized as a flow $\Phi_{t}$ preserving $\FF|W$ such that $\Phi_{1}$ sends each leaf to itself.  Indeed, $\FF|W$ defines a fibration 
 $$\pi:W\to\SI$$ and one can lift the  locally defined parameter $ t$ of $\SI=[0,1]/\{0\sim1\}$ to $\LL$. Equivalently, lifting Lebesgue measure on $\SI$ gives a smooth, transverse, holonomy invariant measure for $\FF$. Remark that the parameter becomes unbounded near $\tb\wh W$ and that $\LL\pitchfork \tb\wh W$, hence our flow extends to a flow on $\wh W$ that fixes $\tb\wh W$ pointwise.  If $L$ is a leaf of $\FF|W$, the flow induces a first return map, $\Phi_{1}=f:L\to L$, called the monodromy diffeomorphism of $L$.  If $\FF$ is smooth, the monodromy is endperiodic~(\cite[Definition~2.4 and Theorem~13.16]{cc:hm} which extend to the higher dimensional cases).    In the case that $\FF$ is of class $\COO$, however, the restriction $\FF|K$ to the nucleus of an octopus decomposition  has the property that the monodromy of any noncompact leaf is endperiodic and this will be sufficient for our purposes in that case.
 
 Let $\LL_{f}$ denote $\LL|W$ and  $\wh\LL_{f}$ the induced foliation on $\wh W$, recording thereby the monodromy $f$ that the flow induces on $L$.  We will also denote $\XX\subset\LL$ by $\XX_{f}$. Remark that, while $\LL_{f}$ determines the monodromy $f$, it is not uniquely determined by $f$.

 \section{The Schwartzmann-Sullivan asymptotic cycles}\label{SchSul}
 
 Save mention to the contrary, we continue to work in $n$-manifolds, $n\ge3$, without the restrictions on the topology of leaves of Section~\ref{restrict}.  
 For compact, 1-dimensional laminations such as $\XX_{f}\subset\LL_{f}$,  Sullivan's theory of foliation cycles~\cite{sull:cycles} specializes to the Schwartzmann theory of asymptotic cycles~\cite{sch_cycles}.   We will  take as our main reference the exposition of Sullivan's theory in~\cite[Chapter~10]{condel1}, but taking into account the full theory of de~Rham currents in~\cite[Chapitre~III]{deRham} on noncompact manifolds such as $\wh W$.  As developed in~\cite{condel1}, this theory was concerned primarily with compact foliated manifolds having all leaves without boundary, but as noted there, everything goes through for compact laminations of possibly noncompact manifolds, provided that, again, all leaves have empty boundary.

\subsection{Forms and currents}
Here we summarize the de~Rham theory of homology defined by \emph{currents}, which are related to differential forms exactly as Schwartzian distributions are related to smooth functions.  Since this material is standard, but may be considered  esoteric by some, we give references rather than proofs.

 Slightly modifying the notation of~\cite{deRham} so as to keep track of the degrees of forms and currents, we set $\DD_{p}=\DD_{p}(\wh{W})$, the locally convex topological vector space of \emph{compactly supported} $p$-forms of class $\Ci$.  That is, the underlying vector space is $A^{p}_{c}(\wh{W})$ and the topology $\T$ is generated by the increasing union of the topologies $\T_{k}$ defined by the $C^{k}$ norm $\|\cdot\|_{k}$ on compact sets, $0\le k<\infty$. This norm arises from a choice of $\Ci$ atlas on $\wh W$, and  basic open neighborhoods of 0 in $\T_{k}$ are of the form $V(C,\epsilon)=\{\phi\mid\supp\phi\subset C\text{ and }\|\phi\|_{k}<\epsilon\}$, where $C\subset\wh{W}$ is compact and $\epsilon>0$. The topology $\T$ is independent of the choices. Following de~Rham~\cite[p.~44]{deRham}, we say that a subset $B\subset\DD_{p}$ is bounded if all of its elements have support in a common compact set and $B$ is bounded in the $C^{k}$ norm, $0\le k<\infty$. 
 
 We set $\DD'_{p}$ equal to the strong dual of $\DD_{p}$, the space of continuous linear functionals on $\DD_{p}$.  This is the space of $p$-\emph{currents} on $\wh W$.  The topology on $\DD'_{p}$ can now be defined exactly as in~\cite[Definition~10.1.17]{condel1}.  This makes $\DD'_{p}$ into a locally convex, topological vector space.  It has a notion of bounded subset~\cite[Definition~10.1.20]{condel1}.
Both $\DD_{p}$ and $\DD'_{p}$ are strong duals of one another~\cite[p.~89, Th\'eor\`eme~13]{deRham}.

 De~Rham also introduces a locally convex topological vector space $\EE_{p}$ with underlying vector space $A^{p}(\wh{W})$.  The topology uses a notion of boundedness which is local boundedness in the $C^{k}$ norms, enabling one to define when a sequence of forms converges to $0$.  
The support of a $p$-current  $T$  is the smallest closed subset $C\subset\wh W$ such that $T(\alpha)=0$ whenever $\supp(\alpha)\cap C=\0$.  The  subset $\EE_{p}'\subset\DD'_{p}$ of currents with compact support is exactly the strong dual of $\EE_{p}$ and vice-versa~\cite[p.~89, Th\'eor\`eme~13]{deRham}.  
Using the notion of bounded set in $\EE_{p}$, one  mimics Definition~10.1.17 of~\cite{condel1} to define the locally convex topology on $\EE'_{p}$.  

All of these spaces are Montel, meaning that every bounded subset is precompact.   For the case $p=0$, this is proven in~\cite[p.~70, Th\'eor\`em~VII and p.~74, Th\'eor\`eme~XII]{schwartz}, the general case being similar.

 The exterior derivative $d:\DD_{p}\to \DD_{p+1}$ is continuous, hence has continuous adjoint $\bd:\DD_{p}'\to\DD_{p-1}'$.  Similarly, one has that $d:\EE_{p}\to\EE_{p+1}$ is continuous with continuous adjoint  $\bd:\EE'_{p}\to\EE'_{p-1}$. Since $d^{2}=0$, we see that $\bd^{2}=0$.  The kernel $\ZZ_{p}\subset\EE'_{p}$ of $\bd$ is the space of $p$-cycles and the image $\BB_{p}=\bd(\EE'_{p+1})\subset\EE'_{p}$ is the space of $p$-boundaries. These are closed subspaces of $\EE'_{p}$. The space $H_{p}(\wh{W})=\ZZ_{p}/\BB_{p}$ is the de~Rham  homology of $\wh{W}$, canonically isomorphic to the singular homology.  The homology of the complex
 $(\DD'_{*},\bd)$  gives the dual space to $H^{p}_{c}(\wh{W})$, for each $p\ge0$, which we may think of as homology computed with  locally finite, but possibly infinite, chains and denote it by $H_{p}^{\infty}(\wh{W})=\ZZ^{\infty}_{p}/\BB^{\infty}_{p}$, where, of course, $\ZZ^{\infty}_{p}$ is the kernel of $\bd$ in $\DD_{p}'$ and $\BB_{p}^{\infty}$ its image.

 \subsection{Topologies on homologies and cohomologies.}\label{tops}
 In homology, we topologize $H_{p}(\wh{W})=\ZZ_{p}/\BB_{p}$  and $H_{p}^{\infty}(\wh{W})=\ZZ^{\infty}_{p}/\BB^{\infty}_{p}$ with the quotient topology.
 
 The manifold $\wh W$, if noncompact, is an increasing union 
 $$K_{0}\subset K_{1}\subset\cdots\subset K_{i}\subset\cdots=\wh W,
 $$ 
 where each $K_{i}$ is a compact manifold which is the nucleus of an octopus decomposition of $\wh W$.  This can be used to define natural topologies on $H^{p}(\wh{W})$ and $H^{p}_{\kappa}(\wh{W})$.

  We  topologize $H^{p}(\wh{W})=\ula{\lim}\,H^{p}(K_{i})$~\cite[Proposition~3F.5]{hatch} with the inverse limit topology.  This is the topology induced from the Tychonov topology by the inclusion 
  $$
  \ula{\lim}\,H^{p}(K_{i})\subset H^{p}(K_{0})\x H^{p}(K_{1})\x\cdots\x H^{p}(K_{i})\x\cdots 
  $$ 
  In fact, this is also the quotient topology induced by the complex $(\EE_{*},d)$ (exercise).  In any event, it is clear that the elements of $H^{p}(\wh{W})$, viewed as linear functionals on $H_{p}(\wh{W})$, are continuous and so, this being the vector space dual, it is also the strong dual of $H_{p}(\wh{W})$.  Similarly, the elements of $H_{p}(\wh{W})$, viewed as linear functionals on $H^{p}(\wh{W})$, are continuous (exercise), so homology is a subspace of the strong dual of cohomology. 
 
 Set $K_{i}^{\circ}$ equal to the relative interior of $K_{i}$ in $\wh W$, noting that the inclusions $H^{p}_{\kappa}(K_{i}^{\circ})\hra H^{p}_{\kappa}(K_{i+1}^{\circ})$ 
 induce a natural identification $H^{p}_{\kappa}(\wh{W})=\ura{\lim}\,H^{p}_{\kappa}(K_{i}^{\circ})$.  We give this space the direct limit (weak) topology.  Equivalently, the topology on  $H^{p}_{\kappa}(\wh{W})\subset H^{p}(\wh{W})$ is the induced topology (exercise).

 \subsection{The asymptotic currents for $\XX$.}  While the results in this section are intended for $\XX_{g}$, where $g$ is the monodromy of the leaves in $W$ of a foliation $\wh\HH$ of relative depth one,  we will be more general for possible future applications.  Thus, let $\LL$ be a one dimensional foliation of $\wh W$ transverse to $\tb\wh W$ and of class at least $\COO$ (no foliation $\HH$ in sight).  We assume that, in the arms of a suitable octopus decomposition, the leaves of $\LL$ are the fibers of an interval bundle.  We let $\XX$ be the lamination consisting of the leaves of $\LL$ that do not meet $\tb\wh W$.  Since $\XX$ is closed in $W$ and lies in the compact nucleus of an octopus decomposition, it is compact.  Evidently, $\LL_{g}$  and the core lamination $\XX_{g}$ satisfy all of this.

 The Dirac currents for $\XX$ are the positively oriented, nontrivial tangent vectors to $\XX$. These currents clearly have compact support. The closure in $\DD_{1}'$ of the union of all positive linear combinations of Dirac currents is a closed, convex cone $\CC_{\XX}\subset\DD'_{1}$, called the cone of \emph{asymptotic currents}.  This cone lies on one side of a hyperplane $H= \omega^{-1}(0)$, where $ \omega:\DD'_{1}\to\R$ is a compactly supported 1-form. Indeed, a nonsingular velocity field $v$ of $\LL$, slightly perturbed to a smooth field $w$  and damped off to $0$ outside  a compact neighborhood of $\XX$, together with a Riemannian metric, defines a 1-form $\omega=\<w,\cdot\>$ such that $\omega(v)>0$ on $\XX$.  Since $\XX$ is compact, the asymptotic currents are compactly supported and we can view $\CC_{\XX}\subset\EE'_{1}$.  When working in $\DD'_{1}$, the continuous linear functionals are the compactly supported 1-forms, but when working in $\EE'_{1}$, they are all 1-forms.

 The base $\wh{\CC}_{\XX}=\CC_{\XX}\cap \omega^{-1}(1)$ of the cone $\CC_{\XX}$ is compact (both in $\EE'_{1}$ and in $\DD'_{1}$) by~\cite[Lemma~10.2.3]{condel1}. Once again the proof goes through by the compactness of $\XX$ and the fact that our spaces are Montel. Those continuous linear functionals $ \eta:\DD_{1}'\to\R$ which are strictly positive on $\wh{\CC}_{\XX}$ are exactly the smooth, compactly supported 1-forms on $\wh{W}$ which are transverse to $\XX$ (meaning that they take a positive value on each Dirac current). Similarly, the continuous linear functionals $\eta:\EE_{1}'\to\R$, strictly positive on $\wh{\CC}_{\XX}$, are the smooth 1-forms on $\wh W$ which are transverse to $\XX$. Sullivan applies the Hahn-Banach  theorem, using compactness of the base, to produce interesting 1-forms  that are transverse to $\XX$ (see~\cite[Subsection~10.2]{condel1}).

\subsection{Cones defined by the asymptotic cycles} The cone $\CC_{\XX}\cap \ZZ_{1}$ of \emph{a\-symptotic cycles} is also a closed, convex cone with compact base.  The natural continuous linear surjection $\ZZ_{1}\to H_{1}(\wh{W})$ carries the cone of asymptotic cycles  onto a  convex cone $\C'_{\XX}\subset H_{1}(\wh{W})$ with compact base. The elements of this cone are called \emph{asymptotic classes}.  Compactness of the base implies that this cone  is closed.  There are dual  closed, convex cones in $\hk(\wh{W})$ and $\hI(\wh{W})$:
 \begin{align}\label{C2conedef}
 \C_{\XX}^{\kappa} &= \{[ \eta]\in\hk(\wh{W})\mid [ \eta]([z])\ge0, \forall[z]\in \C'_{\XX}\},\\
  \C_{\XX} &= \{[ \eta]\in\hI(\wh{W})\mid [ \eta]([z])\ge0, \forall[z]\in \C'_{\XX}\}.
\end{align} 
Generally, these do not have compact base.  Indeed, there are interesting cases in which $\C'_{\XX}$ reduces to a single ray, hence the dual cone will be a full half-space.

 Examples of asymptotic cycles are nonnegative, transverse, holonomy invariant measures $\mu$ on $\XX$ that are finite on (transverse) compact sets.  By Sullivan~\cite[Theorem~10.2.12]{condel1}, these are the only ones.  
 
 \begin{theorem}\label{measures}
The asymptotic cycles for $\XX$ are exactly the nonnegative, transverse, holonomy invariant measures on $\XX$ that are finite on compact sets.
\end{theorem}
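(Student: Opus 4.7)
The plan is to adapt Sullivan's structure theorem for foliation cycles (\cite[Theorem~10.2.12]{condel1}) to the present setting, in which the one-dimensional lamination $\XX_{g}$ sits as a compact sublamination of $\LL_{g}$ inside the possibly noncompact manifold $\wh W$. Compactness of $\XX_{g}$, together with the fact that all asymptotic currents are compactly supported, means the modifications to the standard proof are essentially cosmetic: we may work inside a compact neighborhood of $\XX_{g}$ drawn from the nucleus $K_{i}$ of an octopus decomposition, and the spaces $\EE'_{1}$ and $\DD'_{1}$ agree on such currents.

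For the direction ``measures $\Ra$ asymptotic cycles'', start with a nonnegative, transverse, holonomy invariant measure $\mu$ on $\XX_{g}$ that is finite on compact transversals. Cover $\XX_{g}$ by finitely many flow boxes $U_{i}\cong T_{i}\x J_{i}$ in which $T_{i}$ is a local transversal and $J_{i}$ is a plaque of $\XX_{g}$, and let $\{\phi_{i}\}$ be a subordinate partition of unity with $\sum\phi_{i}\equiv 1$ on a neighborhood of $\XX_{g}$. Define
\[
c_{\mu}(\omega)=\sum_{i}\int_{T_{i}}\left(\int_{J_{i}(t)}\phi_{i}\,\omega\right)d\mu(t),\qquad\omega\in\DD_{1}.
\]
Holonomy invariance of $\mu$ makes $c_{\mu}$ independent of both the flow-box cover and the partition of unity, and integration by parts on each $J_{i}$ (the longitudinal contributions on overlaps cancel by holonomy invariance) gives $c_{\mu}(df)=0$ for every compactly supported $f$, so $c_{\mu}\in\ZZ_{1}$. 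Riemann-sum approximations along the transversals $T_{i}$ then exhibit $c_{\mu}$ as a $\DD'_{1}$-limit of positive linear combinations of Dirac currents tangent to $\XX_{g}$, placing it in $\CC_{g}\cap\ZZ_{1}$.

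For the direction ``asymptotic cycles $\Ra$ measures'', take $c\in\CC_{g}\cap\ZZ_{1}$. Since $c$ is by definition a limit of positive combinations of Dirac currents tangent to $\XX_{g}$, its restriction to any flow box $U\cong T\x J$ disintegrates as $c|_{U}=\int_{T}\delta_{\{t\}\x J}\,d\nu_{U}(t)$ against a nonnegative Borel measure $\nu_{U}$ on $T$; compactness of the base $\wh{\CC}_{g}$ forces each $\nu_{U}$ to be finite on compact subtransversals. Testing $\bd c=0$ against a bump function supported across two overlapping flow boxes forces the holonomy maps between the corresponding transversals to transport $\nu$ to $\nu$, so the local measures patch into a single transverse, holonomy invariant measure $\mu$ on $\XX_{g}$, and one checks directly that $c=c_{\mu}$. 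The main obstacle will be this disintegration step: verifying that a closed one-dimensional current in the closure of positive Dirac combinations really arises as a fiberwise integral, and that the cycle condition translates cleanly into holonomy invariance of the transverse factor. Both are the technical heart of Sullivan's argument, but because $\XX_{g}$ is compact a finite flow-box cover is enough, and no genuinely new analysis in the noncompact target $\wh W$ is required.
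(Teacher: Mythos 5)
Your approach is essentially the same as the paper's: the paper proves this theorem by a one-line citation to Sullivan's structure theorem~\cite[Theorem~10.2.12]{condel1}, noting (as you do) that compactness of $\XX_{g}$ together with compact support of asymptotic currents makes the result carry over to the noncompact ambient manifold $\wh W$. Your proposal simply unpacks the content of that cited theorem in both directions, and your identification of the disintegration step as the technical heart and of compactness as the reason no new analysis is needed matches the paper's reasoning exactly.
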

 
 By a well known theorem of J.~F.~Plante~\cite{plante:meas} and the fact that the leaves of $\XX$, being 1-dimensional, are compact or have linear growth, we obtain the following.
 
 \begin{lemma}\label{nontriv:cycles}
 
 There are nontrivial asymptotic cycles for $\XX$.
\end{lemma}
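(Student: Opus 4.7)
The plan is to combine two ingredients already signalled in the text: Plante's theorem on invariant measures for laminations of subexponential growth, and the identification Theorem~\ref{measures} between asymptotic cycles and nonnegative transverse holonomy-invariant measures on $\XX_g$ that are finite on compacta.

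First I would record the geometric facts that make Plante applicable. The lamination $\XX_g$ is compact (it sits inside the nucleus $K$ of an octopus decomposition of $\wh W$, by the discussion preceding its definition), and each of its leaves is a one-dimensional manifold, hence either a line or a circle. Measuring leaf length by the flow parameter (or any metric comparable to it on the compact set $\XX_g$), a ball of radius $r$ in any leaf has length at most $2r$, so every leaf has linear and in particular subexponential growth. Plante's theorem~\cite{plante:meas} in the version for compact laminations then produces a nontrivial nonnegative transverse holonomy-invariant measure on $\XX_g$, finite on compact transversals: concretely, one averages leafwise Dirac measures along F\o lner-type exhaustions of a non-compact leaf and extracts a weak-$*$ limit, using compactness of $\XX_g$ to guarantee that the limit is nonzero and supported inside $\XX_g$.

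Second, I would feed this measure into Theorem~\ref{measures} to obtain a nontrivial asymptotic cycle in $\CC_g\cap\ZZ_1$, which is what is to be proved. As a sanity check on the degenerate cases: if $\XX_g$ contains a circle leaf, the invariant arc-length on that circle is manifestly nontrivial, so the Plante argument can be bypassed; in the remaining case all leaves are lines and the F\o lner construction is genuinely needed but does not run into obstructions, precisely because linear growth trivially satisfies the F\o lner condition.

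There is really no hard step here; the only point that needs a moment's care is verifying the hypotheses of Plante's theorem in the possibly noncompact ambient $\wh W$. This is not a genuine obstacle because all the dynamics and the averaging used in the proof take place inside the compact set $\XX_g\ss K$, so the noncompactness of $\wh W$ is irrelevant. Thus the lemma reduces to a one-line citation of Plante plus Theorem~\ref{measures}.
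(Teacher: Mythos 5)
Your proposal matches the paper's argument exactly: the paper proves this lemma by citing Plante's theorem together with the observation that one-dimensional leaves have linear growth, and relies on the preceding Sullivan identification (Theorem~\ref{measures}) to pass from an invariant transverse measure to an asymptotic cycle. Your additional remarks about compactness of $\XX_g$ and the circle-leaf degenerate case are sound elaborations of the same one-line proof.
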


For the case that $\wh\HH$ is of relative depth~$1$ with monodromy $g:L\to L$,  $\XX=\XX_{g}$ is smooth and we have the following.

\begin{lemma}\label{transverseform}
There is a closed $1$-form $\eta$ on $W$, transverse to $\XX_{g}$, hence no nontrivial asymptotic cycle bounds in $(\EE'_{*},\bd)$.  If $\HH$ is trivial in the arms of some octopus decomposition, this form can be chosen to be compactly supported and no nontrivial asymptotic cycle bounds either in $(\DD'_{*},\bd)$ or $(\EE'_{*},\bd)$.
\end{lemma}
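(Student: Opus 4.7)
The plan is to exploit the fibration $\pi\colon W\to\SI$ determined by the leaves of $\FF|W$ to write down the transverse closed $1$-form, and then to combine the compactness of $\supp(z)$ in $W$ with the homotopy equivalence $W\hra\wh W$ to rule out bounding. Set $\omega=\pi^*\,d\theta$, a smooth closed $1$-form on $W$ that is positive on the oriented tangent vectors to $\LL_g$; hence it is transverse to $\XX_g\sseq\LL_g$, and $z(\omega)>0$ for every nontrivial asymptotic cycle $z$, since such a $z$ is a nonzero closed positive combination (or limit of such) of Dirac currents of $\XX_g$.

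To deduce that no such $z$ bounds in $(\EE'_*,\bd)$, suppose $z=\bd T$ for some $T\in\EE'_2(\wh W)$. Because $W\hra\wh W$ is a homotopy equivalence, the class $[\omega]\in H^1(W)$ is the image of some $[\tilde\omega]\in H^1(\wh W)$ represented by a smooth closed form $\tilde\omega$ on all of $\wh W$. Then $\tilde\omega|_W-\omega=df$ for some $f\in\Ci(W)$; multiplying $f$ by a cutoff equal to $1$ on a neighborhood of $\supp(z)$ and compactly supported in $W$ shows $z(df)=0$ (since $\bd z=0$), whence $z(\tilde\omega)=z(\omega)>0$. But $T\in\EE'_2(\wh W)$ is compactly supported, hence pairs with every smooth form on $\wh W$, so $z(\tilde\omega)=T(d\tilde\omega)=0$, a contradiction.

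For the $\CII$ case, compactness of junctures \cite[Theorem~8.1.26]{condel1} supplies a compact set $K\sseq\wh W$ containing $\XX_g$ such that each component of $\wh W\sm K$ is an arm of the form $Y_i\times[0,\infty)$ on which $\FF$ is the product foliation with leaves $Y_i\times\{t\}$. On such an arm $\pi(y,t)$ depends only on $t$, and since $[0,\infty)$ is simply connected the local parameter $\theta\o\pi$ lifts to a smooth function $\Theta_i$ on the arm with $\omega=d\Theta_i$ there. Choose smooth cutoffs $\rho_i$ supported in their respective arms, equal to $0$ on a neighborhood of $K$ and to $1$ outside a larger compact neighborhood of $K$ in the arm. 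The form
$$\omega'=\omega-\sum_i d(\rho_i\Theta_i)$$
is closed, equals $\omega$ on a neighborhood of $\XX_g$ (where every $\rho_i$ vanishes, so it remains transverse to $\XX_g$), and vanishes in a neighborhood of $\tb\wh W$, so extends by $0$ to a compactly supported closed $1$-form on $\wh W$, i.e., $\omega'\in\DD_1(\wh W)$. If $z=\bd T$ for some $T\in\DD'_2(\wh W)$, then $z(\omega')=T(d\omega')=0$ while $z(\omega')=z(\omega)>0$ by locality, a contradiction; since $\EE'_2\sseq\DD'_2$, non-bounding in $(\EE'_*,\bd)$ follows a fortiori.

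The main delicate point is the cutoff construction in the $\CII$ case: the product structure on the arms, guaranteed here by compactness of junctures, is what lets one write $\omega=d\Theta_i$ near infinity, and this exactness near infinity is precisely what is needed to cut off $\omega$ to compact support without destroying its closedness or its transversality to $\XX_g$. In the $\COO$ case this product structure may fail, explaining why only the weaker non-bounding in $(\EE'_*,\bd)$ is claimed.
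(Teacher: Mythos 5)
Your proof is correct and follows essentially the same approach as the paper's: choose the closed $1$-form defining the fibration $\pi\colon W\to\SI$, observe it is positive on asymptotic cycles, and in the $\CII$ case use compactness of junctures to make the form exact on the arms and cut it off to compact support. You supply somewhat more detail than the paper on the step ``positive on asymptotic cycles, hence non-bounding'' (invoking the homotopy equivalence $W\hra\wh W$ to transfer $[\omega]$ to a genuine closed form on $\wh W$), and your description of the arm as $Y_i\times[0,\infty)$ transposes the compact and noncompact factors relative to the paper's $B\times I$ octopus picture, but neither affects the substance of the argument.
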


\begin{proof}
Whether the foliation is smooth or  of class $\COO$, $\FF|W$, as a fiber bundle over $\SI$ with smooth leaves,  has a smooth structure. Let $ \omega$ be a closed, nonsingular 1-form defining $\FF|W$. This is clearly transverse to $\XX_{g}$ and $\omega$ takes positive values on all nontrivial asymptotic cycles which, therefore, cannot bound in $(\EE'_{*},\bd)$.  Relative to a suitable octopus decomposition of $\wh W$,  $\XX_{g}$ is contained in the interior of the compact nucleus $K$ of $\wh W$. If $K$ is chosen large enough, then $\HH$  is trivial in $\wh {W}\sm K$. Thus $ \omega= d\gamma$ outside of $K$.  Extending and damping $ \gamma$ off smoothly to be 0 in a neighborhood of $\XX_{g}$, we see that the compactly supported form $\eta= \omega-d \gamma$ is as desired. Again $\eta$ takes positive values on all nontrivial asymptotic cycles, hence these cannot bound either in $(\DD'_{*},\bd)$ or $(\EE'_{*},\bd)$.
\end{proof}

In our more general case,  nontrivial asymptotic cycles may bound.  When they do not, we have the following key result,  the proof of which is an adaptation of the proof of~\cite[Lemma~10.2.8]{condel1}, which assumed the foliated manifold (here, $\wh W$) was compact.  The other hypothesis of that lemma is given by Lemma~\ref{nontriv:cycles}. 
 
 \begin{theorem}\label{intr:cone}
If no nontrivial asymptotic cycle bounds in $(\EE'_{*},\bd)$, $\intr\C_{\XX}\ne\0$ and consists of exactly those classes $[ \eta]\in\hI(\wh{W})$ that are represented by closed 
$1$-forms $\eta$ transverse to $\XX$.  If no nontrivial asymptotic cycle bounds in $(\DD'_{*},\bd)$, the completely analogous assertion holds for $\Ck_{\XX}$ with $\eta$ compactly supported.
\end{theorem}

\begin{proof}
Let $\C_{\XX}^{\circ}\subset\C_{\XX}$ be the set of classes that take strictly positive values on the nonzero elements of $\C'_{\XX}$.  This is nonempty by an application of the Hahn-Banach theorem~\cite[Lemma~10.2.7]{condel1}.  If 
$$
[\eta]=([\eta_{0}],[\eta_{1}],\dots,[\eta_{i}],\dots)\in\ula{\lim}\,\hI(K_{i})=\hI(\wh{W}) 
$$
is in $\C^{\circ}_{\XX}$, then $\eta_{0}$ takes positive values on all nontrivial asymptotic cycles.  These cycles are all supported in the compact manifold $K_{0}$, hence the analogous result for $\wh{W}=K_{0}$ is proven in~\cite[Lemma~10.2.8]{condel1}. (Compactness is used to guarantee that $\hI(K_{0})$ is finite dimensional.) Thus $[\eta_{0}]\in U\subset\hI(K_{0})$, where $U$ is the interior of the dual cone there.  Then  $$\UU=U\x\hI(K_{1})\x\hI(K_{2})\x\cdots\x\hI(K_{i})\x\cdots$$ is an open set in the Tychonov topology and $\UU\cap\hI(\wh{W})=\C^{\circ}_{\XX}$, hence this set is open. We prove that it is the entire interior of $\C_{\XX}$ by showing that every $[\alpha]\in\C_{\XX}$ that vanishes on some nontrivial asymptotic cycle $T$ is in the frontier.  Indeed, if $[\eta]\in\C^{\circ}_{\XX}$, form the line of classes $t[\eta]+(1-t)[\alpha]$, $-1\le t\le 1$. For $0<t\le1$, these classes are in $\C^{\circ}_{\XX}$, but for $-1\le t<0$, they take negative values on $T$, hence are not in $\C_{\XX}$.

It remains to show that if $[\eta]\in\C^{\circ}_{\XX}$, then $\eta$ is cohomologous to a closed form $\eta'$ that takes positive values on the entire base $\wh{\CC}_{\XX}$ of the cone of asymptotic currents.  The kernel of $[\eta]:H_{1}(\wh{W})\to\R$ is a closed hyperplane in that vector space and its pre-image  $V\subset\ZZ_{1}$ is the kernel of $\eta:\ZZ_{1}\to\R$,  a closed vector subspace containing $\BB_{1}$ and meeting $\CC_{\XX}\cap\ZZ_{1}$ only at the vertex of that cone, hence meeting $\CC_{\XX}$ itself only at the vertex.  Using the standard  Hahn-Banach argument (cf.~\cite[Section~10.2]{condel1}), we find a continuous linear functional $\eta':\EE'\to\R$ which is strictly positive on $\wh{\CC}_{\XX}$ and vanishes identically on $V$.  Since $\BB_{1}\subset V$, $\eta'$ is a closed form transverse to $\XX$.  Since $[\eta']$ vanishes on the kernel of $[\eta]$ and is not trivial, $[\eta]$ and $[\eta']$ are positive multiples of each other, completing the proof for the cone $\C_{\XX}$.

For the cone $\Ck_{\XX}$, the proof is quite similar, but there are sufficient differences that we will give details. The fact that $\C^{\kappa\circ}_{\XX}\ne\0$ is guaranteed by the standard Hahn-Banach argument.  The fact that this is the interior of $\Ck_{\XX}$ follows from the corresponding fact for $\C_{\XX}$ via the relative topology.  If $[\eta]\in\intr\Ck_{\XX}$, we take $\eta$ compactly supported,  hence it is a continuous linear functional $\eta:\DD'_{1}\to\R$.  Restricting to the space $\ZZ_{1}^{\infty}$ of closed currents, we obtain a closed subspace $V=\eta^{-1}\{0\}$ which contains the space $\BB_{1}^{\infty}$ of boundaries.  Viewing $\CC_{\XX}$ as a cone in $\DD'_{1}$, we again apply the Hahn-Banach theorem to produce a compactly supported form $\eta'$ strictly positive on $\wh{\CC}_{\XX}$ and vanishing on $V$. Thus $\eta'$ is  compactly supported and transverse to $\XX$.  Since $\BB_{1}^{\infty}\subset V$, $\eta'$ is closed.  Via the continuous injection $\EE'_{1}\hra\DD'_{1}$, $V$ pulls back to a space having as image in $H_{1}(\wh{W})$  the kernel of $[\eta]:H_{1}(\wh{W})\to\R$.  Since $[\eta']$ also vanishes on this space and is nontrivial,  $[\eta]$ and $[\eta']$ are positive multiples of each other.
\end{proof}

Varying the  monodromy $g$ within its  isotopy class will give different  cones $\C_{g}=\C_{\XX_{g}}$ and $\Ck_{g}=\Ck_{\XX_{g}}$.  In Sections~\ref{HMcones} and~\ref{HMcones1}, when $\dim M=3$, we use the Handel-Miller theory~\cite{cc:hm} to find a monodromy  $h$ which gives  maximal cones $\C_{h}$ and $\Ck_{h}$ among all cones given by monodramy maps in the  isotopy class of $g$.  The monodromy $h$ has the ``tightest'' dynamics in its isotopy class, and determines the minimal dual homology cones. 

There is a serious smoothness problem with this monodromy  $h$ given by Handel-Miller theory~\cite{cc:hm}.  But $h$ is not unique in its isotopy class and a suitable choice will give a strongly transverse $\LL_{h}$ which is smooth except at finitely many closed leaves in $\XX_{h}$ where it is only of class $\COO$.  Since the Schwartzmann-Sullivan theory works when $\XX$ is only integral to a $\CO$ line field, all will be well.

The notations $\C_{g}$, $\Ck_{g}$ and $\C'_{g}$, suggest dependence only on $g$, rather than on the choice of $\XX_{g}$.  In Section~\ref{indchoice}, we will see that this is correct when $\dim M=3$.  At the level of currents, the notation $\CC_{g}$ is a bit of an abuse.
 
 \subsection{Homology directions}

It will be important to characterize a particularly simple spanning
set of $\C_{\XX}'$, the so called ``homology directions'' of Fried~\cite[p.~260]{fried}.  Assuming that $\LL$ has been parametrized as a
nonsingular $\COO$ flow $\Varphi _{t}$, select a point $x\in\XX$ and
let $\Gamma $ denote the $\Varphi $-orbit of $x$.  If this is a
closed orbit, it defines an asymptotic cycle which we will denote by
$\ol{\Gamma }$.  If it is not a closed orbit, let $\Gamma _{\tau
}=\{\Varphi _{t} (x)\mid 0\le t\le \tau \}$.  Let $\tau
_{k}\uparrow\infty$ and set $\Gamma _{k}=\Gamma _{\tau _{k}}$.  After
passing to a subsequence, we obtain an asymptotic current $$ \ol{\Gamma}
= \lim_{k\ra\infty}\frac{1}{\tau _{k}}\int_{\Gamma _{k}}^{}. $$  In fact this is a \emph{cycle}.   One calls $\Gamma_{k}$ a ``long, almost closed orbit'' of $\XX_{h}$.  Its endpoints lie in the compact set $\XX$  and it can be closed by adding a uniformly bounded curve in $M$. These are averaged out in the limit and the corresponding singular cycles, also called ``long, almost closed orbits'', are denoted by $ \Gamma'_{k}$.  The cycles $(1/\tau_{k}) \Gamma'_{k}$ also limit on $\ol{\Gamma}$, proving that it is a cycle.

\begin{lemma}\label{long_aco}
The asymptotic current $\ol{\Gamma }$, obtained as above, is an asymptotic
cycle.
\end{lemma}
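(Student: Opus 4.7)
The plan is to show that $\bd \ol{\Gamma} = 0$ by realizing $\ol{\Gamma}$ as a weak-$\ast$ limit of honest $1$-cycles in $\DD'_{1}$ and then appealing to continuity of $\bd \colon \DD'_{1} \to \DD'_{0}$. The raw currents $(1/\tau_{k})\int_{\Gamma_{k}}$ have small but nonzero boundary, supported at $x$ and $\Varphi_{\tau_{k}}(x)$; the trick is to close those endpoints off by a uniformly short path and check that nothing is lost in the limit.

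Concretely, I first fix a compact, path-connected neighborhood $N \subset \wh{W}$ of the compact lamination $\XX_{g}$, with finite diameter $D$ in some chosen Riemannian metric. Since both $x$ and $\Varphi_{\tau_{k}}(x)$ lie in $\XX_{g} \subset N$, I choose a piecewise smooth arc $\sigma_{k} \subset N$ joining $\Varphi_{\tau_{k}}(x)$ to $x$ of length at most $D$, and set $\Gamma'_{k} = \Gamma_{k} \ast \sigma_{k}$. Each $\Gamma'_{k}$ is a closed piecewise smooth loop in $N$, so the corresponding integration current in $\EE'_{1}$ is automatically a $1$-cycle by Stokes' theorem. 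For any test form $\alpha \in \DD_{1}$,
$$
\frac{1}{\tau_{k}}\int_{\Gamma'_{k}}\alpha \;=\; \frac{1}{\tau_{k}}\int_{\Gamma_{k}}\alpha \;+\; \frac{1}{\tau_{k}}\int_{\sigma_{k}}\alpha,
$$
where, along the subsequence realizing $\ol{\Gamma}$, the first term tends to $\ol{\Gamma}(\alpha)$ by definition while the second is bounded by $D\|\alpha\|_{\CO}/\tau_{k} \to 0$. Hence $(1/\tau_{k})\int_{\Gamma'_{k}} \to \ol{\Gamma}$, and continuity of $\bd$ forces $\bd \ol{\Gamma} = 0$, so $\ol{\Gamma} \in \CC_{g} \cap \ZZ_{1}$ is indeed an asymptotic cycle.

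The only step requiring any attention is the uniform length bound on the closing arcs $\sigma_{k}$; that is pure point-set topology, resting on compactness of $\XX_{g}$ and local path-connectedness of $\wh{W}$. Everything else is formal: the boundary contribution of $\sigma_{k}$ is of order $1/\tau_{k}$ and vanishes in the limit, and $\bd$ is continuous as the transpose of $d \colon \DD_{0} \to \DD_{1}$.
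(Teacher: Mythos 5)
Your proof is correct and takes essentially the same route as the paper: the text preceding the lemma closes each $\Gamma_{k}$ by a uniformly bounded arc to form singular cycles $\Gamma'_{k}$ whose normalizations also converge to $\ol{\Gamma}$, which is exactly your construction. You have simply supplied the quantitative details (the compact neighborhood $N$, the $O(1/\tau_{k})$ estimate, and the explicit appeal to continuity of $\bd$) that the paper leaves implicit.
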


Another proof can be given by appealing to Stokes's theorem.

\begin{defn}
All  asymptotic cycles $\ol{\Gamma} $, obtained as
above, and  their homology classes are called \emph{homology directions} of
$\XX$.
\end{defn}

By abuse, we will denote both the cycle and its homology class by $\ol \Gamma$.

An elementary application of ergodic theory proves the following
(see~\cite[Proposition II.25]{sull:cycles} and~\cite[Proposition~10.3.11]{condel1}).

\begin{lemma}\label{span}
Any asymptotic cycle $\mu$ can be arbitrarily well approximated by
finite, nonnegative linear combinations $\sum_{i=1}^{r}a_{i}\ol{\Gamma }_{i}$ of homology
directions. If $\mu \ne0$, the coefficients $a_{i}$ are strictly positive and
their sum is bounded below by a constant $b_{\mu }>0$ depending only on $\mu
$.
\end{lemma}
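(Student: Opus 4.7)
The plan is to combine Theorem~\ref{measures}, which identifies asymptotic cycles on $\XX_g$ with finite nonnegative transverse holonomy-invariant measures, with the ergodic decomposition of such a measure and a Birkhoff-type ergodic theorem for the flow $\Phi_t$ parametrizing $\LL_g$. Given an asymptotic cycle $\mu$, I would first view it as a $\Phi_t$-invariant Borel measure on the compact set $\XX_g$. The standard ergodic decomposition then gives $\mu=\int\mu_e\,d\rho(e)$, where each $\mu_e$ is an ergodic probability measure for $\Phi_t$ and $\rho$ is a probability measure on the space of ergodic components.

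For $\rho$-almost every $e$, Birkhoff's theorem applied to $(\Phi_t,\mu_e)$ says that for $\mu_e$-a.e.\ basepoint $x\in\XX_g$,
\[
\frac{1}{\tau_k}\int_{\Gamma_{\tau_k}(x)}\alpha \;\longrightarrow\; \int_{\XX_g}\alpha\,d\mu_e
\]
for every test form $\alpha\in\DD_1(\wh{W})$ as $\tau_k\uparrow\infty$. Hence the long almost-closed orbit construction at $x$ produces a homology direction $\ol{\Gamma}(x)$ that realizes $\mu_e$. To build a finite approximation of $\mu$, I would partition the set of ergodic components into finitely many Borel pieces $E_1,\dots,E_r$, pick $e_i\in E_i$, set $c_i=\rho(E_i)$, and check that $\sum c_i\mu_{e_i}\to\mu$ in $\EE'_1$ as the partition is refined. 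Replacing each $\mu_{e_i}$ by a nearby homology direction $\ol{\Gamma}_i$ then gives $\mu\approx\sum a_i\ol{\Gamma}_i$ with $a_i=c_i\ge 0$ and $\sum a_i=\mu(\XX_g)$, with error as small as desired.

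For the lower bound on $\sum a_i$ when $\mu\ne 0$, Lemma~\ref{transverseform} furnishes a closed 1-form $\omega$ on $W$ transverse to $\XX_g$, so $\omega(\mu)>0$. Since $\XX_g$ is compact and $\omega$ is smooth near it, the pairing of $\omega$ with the positively oriented unit tangent vectors to $\XX_g$ (relative to the fixed flow parametrization) is bounded above by some constant $C>0$. Integrating along an orbit segment of length $\tau_k$ and dividing by $\tau_k$ yields $\omega(\ol{\Gamma}_i)\le C$ in the limit for every homology direction. Taking the approximation close enough that $\sum a_i\,\omega(\ol{\Gamma}_i)\ge\omega(\mu)/2$, one obtains $\sum a_i\ge\omega(\mu)/(2C)=:b_\mu>0$, a bound depending only on $\mu$ and the fixed choice of $\omega$.

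The hard part, I expect, is making the Riemann-sum approximation rigorous in the infinite-dimensional strong dual topology on $\EE'_1$: one must verify that $e\mapsto\mu_e$ is sufficiently measurable and that the integral converges against the relevant test forms. This is where the Montel property of $\EE'_1$ and the compactness of the base $\wh{\CC}_g$ of the asymptotic cone do the work, reducing weak approximation to testing against finitely many forms at a time and returning the argument to the essentially finite-dimensional situation of~\cite[Proposition~10.3.11]{condel1}.
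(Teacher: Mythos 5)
Your ergodic-decomposition-plus-Birkhoff argument, with the transverse form from Lemma~\ref{transverseform} furnishing the lower bound on $\sum a_{i}$, is precisely the ``elementary application of ergodic theory'' the paper has in mind when it cites Sullivan's Proposition~II.25 and Candel--Conlon's Proposition~10.3.11. The Riemann-sum step you flag as delicate is best handled by partitioning the ergodic measures by small weak-$*$ balls (or by invoking Krein--Milman to write $\mu$ as a weak-$*$ limit of finite convex combinations of ergodic components), since arbitrary sample points from a merely measurable decomposition need not give convergent Riemann sums; once weak-$*$ convergence of the bounded approximating sums is secured, the Montel property of $\EE'_{1}$ upgrades it to strong convergence, and the proposal is sound.
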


\subsection{The independence of the cones from various choices}\label{indchoice}

In this subsection, we restrict to the case that $\dim M=3$, imposing the usual restrictions on the topology of the leaves of $\FF$, namely that the compact leaves have strictly negative Euler characteristic and no leaf is contractible nor has the homotopy type of the circle.  We assume that $\wh\FF$ is of relative depth~$1$ on $\wh W$ and  consider $\LL_{f}$ and $\XX_{f}$, corresponding to the  monodromy homeomorphism $f$ on a leaf $L$ of $\FF|W$.  We will not need $f$ nor the foliations to be smooth.

Let us first note that long, almost closed orbits and homology directions, as classes in the singular homology $H_{1}(\wh{W})$,  are clearly well defined even when $\LL_{f}$ and $\XX_{f}$ are only $\CO$.    While most of the theory of asymptotic classes fails when these objects are not at least integral to a $\CO$ line field, we can still define the cone $\C'_{f}$ as the closure in $H_{1}(\wh{W})$ of the set of nonnegative linear combinations of homology directions.  This is a closed, convex cone which, when $\XX_{f}$ is of class $\COO$, coincides with the cone already defined (Lemma~\ref{span}).  We are going to show that  this cone depends only on $f$, not on the choice of $\LL_{f}$.

 Let $\LL$ and $\LL_{\sharp}$ be strongly transverse to $\wh\FF$.   
 Let $\XX$ and $\XX_{\sharp}$ be the respective core
laminations and let $\C'_{\XX}$ and $\C'_{\XX_{\sharp}}$ denote the corresponding cones in $H_{1}(\wh{W})$, $\C_{\XX}$ and $\C_{\XX_{\sharp}}$ those in $\hI(\wh{W})$, and $\Ck_{\XX}$ and  $\Ck_{\XX_{\sharp}}$ those in $\hk(\wh{W})$.  

In~\cite{cc:cone}, the following elementary theorem was deduced as a corollary of a much deeper result (Lemma~4.10 in that reference) which we attempted to deduce from results of M.~E.~Hamstrom~\cite{ham:disk-holes, ham:torus, ham:homeo}.  A correct proof of that lemma  needs a deep result of T.~Yagasaki~\cite{yag}, but we omit this because we do not need it.   

 \begin{theorem}\label{samecones}
Let  $L$ be a  leaf of $\FF|W$, and let $\LL$
and $\LL_{\sharp}$ be  $1$-dimensional foliations of $\wh W$,  strongly transverse to $\wh\FF$,  having respective core laminations $\XX$ and $\XX_{\sharp}$, and inducing the same endperiodic monodromy  $f:L\to L$.    Then $\C'_{\XX}=\C'_{\XX_{\sharp}}$, $\C_{\XX}=\C_{\XX_{\sharp}}$ and $\Ck_{\XX}=\Ck_{\XX_{\sharp}}$.
\end{theorem}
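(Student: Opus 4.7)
The plan is to reduce all three cone equalities to the single statement $\C'_{\XX}=\C'_{\XX_\sharp}$ in $H_1(\wh{W})$, since $\C_{\XX}$ and $\C^{\kappa}_{\XX}$ are by definition the dual cones of $\C'_{\XX}$ in $\hI(\wh{W})$ and $\hk(\wh{W})$ respectively, and analogously for $\XX_\sharp$; so the cohomology equalities follow from the homology one by duality. By Lemma~\ref{span}, each homology cone is the closed convex hull of nonnegative multiples of its homology directions, so it suffices to show every homology direction of $\XX$ lies in $\C'_{\XX_\sharp}$; the reverse inclusion then follows by symmetry. A key observation is that the compact set $X=X_g\ss L$ of admissible base points depends only on the monodromy $g$, hence is common to both core laminations.

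Fix $x\in X$ and a sequence $n_k\to\infty$, and form the long almost-closed orbits for $\LL$ and $\LL_\sharp$ starting at $x$, closed up by the same singular arcs $\alpha_{n_k}\ss L$. The central step is to compare the two by analyzing the difference $\gamma_{n_k}-\gamma_{n_k}^\sharp$ in $\wh{W}$, which is a singular $1$-cycle since both arcs share the endpoints $x$ and $g^{n_k}(x)$. This cycle decomposes into period-one pieces $c_i=\gamma^{(i)}-\gamma_\sharp^{(i)}$, each lying in the kernel of $\pi_*\colon H_1(\wh W)\to H_1(\SI)$. Lifting to the infinite cyclic cover $\wt W\cong L\x\R$ of $W$ and comparing the trivializations $\Psi,\Psi_\sharp$ that render the lifts of $\LL$ and $\LL_\sharp$ vertical, each class $[c_i]\in H_1(\wt W)=H_1(L)$ is represented by a loop determined by a one-parameter family $\theta_s\colon L\to L$ satisfying $\theta_{s-1}=g\theta_sg^{-1}$ and $\theta_n=\id$ for all integers $n$.

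The main obstacle is to show that the rescaled asymptotic limit $\lim(1/\tau_k)\sum_{i=0}^{n_k-1}[c_i]$ lies in $\C'_{\XX_\sharp}$ in $H_1(\wh W)$. The plan is to apply Birkhoff's ergodic theorem to the $g$-action on $X$: the Ces\`aro average $(1/n_k)\sum_i [c_i]$ converges to an integral against a $g$-invariant measure $\mu$ on $X$, and by Theorem~\ref{measures}, $\mu$ corresponds canonically to a transverse invariant measure on both $\XX$ and $\XX_\sharp$, hence to asymptotic cycles of both core laminations. The standing hypotheses of the paper (in particular the exclusion of compact torus and Klein bottle leaves) play a role in ensuring that this shift can be absorbed into both cones simultaneously, preventing them from being forced apart. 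The reverse inclusion, obtained by exchanging the roles of $\LL$ and $\LL_\sharp$, closes the argument.
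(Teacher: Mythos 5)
The key step your proposal needs, but does not supply, is that the period-one difference loops $c_i=\gamma^{(i)}-\gamma^{(i)}_\sharp$ are actually trivial. The paper establishes exactly this: using Lemma~\ref{commutes}, it shows that for a surface $L$ with nonabelian fundamental group (guaranteed here since $L$ is noncompact with no simple end), the loop $\beta_{x}=p\circ\alpha_x$ would be central in $\pi_1(L)$ if it were nontrivial, hence each arc of $\LL_\sharp$ is homotopic rel endpoints to the corresponding arc of $\LL$ (Corollary~\ref{0homotopic}). Consequently each $c_i$ vanishes in $\pi_1$, the long almost-closed orbits of $\XX$ and $\XX_\sharp$ determined by the same basepoint, integer time, and closing arc are homologous, and the two laminations have literally the same homology directions. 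Your setup --- reducing to homology directions via Lemma~\ref{span}, lifting to $L\times\R$, observing that $X_g$ is shared --- is all sound, but you stop short of the commutator argument and instead reach for Birkhoff's theorem.

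The ergodic-theoretic substitute does not close the gap. The quantity $(1/n_k)\sum_i[c_i]$ is a Ces\`aro average of \emph{differences} of one-period arcs, not of Dirac-type currents, so even granting convergence there is no reason its limit should be a nonnegative transverse invariant measure, nor that it should lie in $\C'_{\XX}$, $\C'_{\XX_\sharp}$, or indeed in any preassigned cone; the appeal to Theorem~\ref{measures} is therefore misplaced. Further, a transverse invariant measure on $X_g$ determines an asymptotic cycle whose class in $H_1(\wh W)$ depends on the ambient lamination, not just on the measure, so ``corresponds canonically to a transverse invariant measure on both $\XX$ and $\XX_\sharp$'' does not yield a single well-defined homology class common to both. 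The remark that the exclusion of torus/Klein bottle leaves ``ensures this shift can be absorbed into both cones'' gestures at the correct hypothesis --- it is exactly what makes $\pi_1(L)$ nonabelian --- but the hypothesis is used in the paper for a homotopy-theoretic rigidity statement, not a measure-theoretic one. To repair the argument you should replace the Birkhoff step with the centrality argument of Lemma~\ref{commutes} and Corollary~\ref{0homotopic}, after which each $c_i$ is nullhomotopic and the two laminations' homology directions coincide on the nose.
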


We will show that the homology directions determined by the long, almost closed
orbits of $\XX$ are exactly the same as the ones for $\XX_{\sharp}$ and the theorem will follow.  Our arguments are carried out in the $\CO$ category for an arbitrary connected surface $L$ that is either compact with strictly negative Euler characteristic or is noncompact, noncontractible and not homotopy equivalent to the circle. By our basic assumptions, the leaves of $\FF$ satisfy this condition.

  Let $I$ be the compact interval
$[0,1]$ and consider the product $L\times I$. (One obtains such a product, for instance,  by
cutting $W$ apart along $L$ and taking as the interval fibers the
resulting segments of the leaves of $\LL$.) For each $x\in L$, denote by
$I_{x}$ the interval fiber with endpoints $\{x\}\times\{0,1\}$. Consider a
second fibration of $L\times I$ by intervals $J_{x}$, requiring that the
endpoints of $J_{x}$ coincide with those of $I_{x}$, for all $x\in L$.  (By
the hypothesis on $\LL_{\sharp}$, this second fibration arises in our case by cutting
apart along $L$ and using the segments of leaves of $\LL_{\sharp}$ as fibers.)
For each $x\in L$, let $\alpha_{x}$ denote the loop in $L\times I$ obtained
by following $I_{x}$ from $(x,0)$ to $(x,1)$ and then following $J_{x}$ from
$(x,1)$ to $(x,0)$.  Finally, if $p:L\times I\to L$ is the canonical
projection, let $\beta_{x}=p\circ\alpha_{x}$, a loop in $L$.

\begin{lemma}\label{commutes}
Let $x_{0}\in L$ and set $\delta=\beta_{x_{0}}$.  If $\gamma(s)$, $0\le s\le 1$, is any
other closed curve in $L$ based at $x_{0}$, then $\gamma\cdot\delta =
\delta\cdot\gamma$ in $\pi_{1}(L,x_{0})$.
\end{lemma}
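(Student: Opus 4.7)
The plan is to construct, from $\gamma$, an explicit nullhomotopy in $L$ of the commutator $\gamma\cdot\delta\cdot\ol{\gamma}\cdot\ol{\delta}$, by exploiting the continuous dependence of the family of loops $\{\beta_x\}_{x\in L}$ on the basepoint $x$.

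First I would observe that the fibration $\{J_x\}_{x\in L}$ of $L\times I$ admits a continuous global parametrization: there is a continuous map $J:L\times [0,1]\to L\times I$ with $J(x,0)=(x,0)$, $J(x,1)=(x,1)$, and $t\mapsto J(x,t)$ a homeomorphism onto $J_x$ for each $x$.  This follows from the continuity of the $1$-dimensional foliation $\LL_{\sharp}$ to which the fibres $J_x$ belong; no smoothness is required.  Combining $J$ with the trivial parametrization of $I_x=\{x\}\times I$ and composing with the canonical projection $p:L\times I\to L$ yields a continuous map $B:L\times [0,1]\to L$ with $B(x,\cdot)=\beta_x$; in particular $B(x,0)=B(x,1)=x$ for every $x\in L$.

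Given the loop $\gamma:[0,1]\to L$ based at $x_0$, I would then define
\[
H:[0,1]\times [0,1]\to L, \qquad H(s,t)=B(\gamma(s),t)=\beta_{\gamma(s)}(t),
\]
which is continuous. Reading off the restrictions to the four sides of the square gives $H(s,0)=H(s,1)=\gamma(s)$ and $H(0,t)=H(1,t)=\beta_{x_0}(t)=\delta(t)$. Traversing $\bd([0,1]^2)$ counterclockwise from the corner $(0,0)$ therefore represents the loop $\gamma\cdot\delta\cdot\ol{\gamma}\cdot\ol{\delta}$ in $L$ based at $x_0$, and this loop is nullhomotopic because it extends over the disc $[0,1]^2$ via $H$. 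Hence $\gamma\cdot\delta=\delta\cdot\gamma$ in $\pi_1(L,x_0)$.

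The only substantive point is the continuous dependence of $\beta_x$ on $x$, and I do not expect a real obstacle here; everything else is formal. I note that the hypothesis that $\pi_1(L)$ is nonabelian plays no role in the lemma itself; presumably it enters in the sequel, where the centrality of $\delta$ established here will combine with nonabelianness (and the structure of surface groups whose centres are trivial) to force $\delta$ to be trivial in $\pi_1(L,x_0)$, which is the conclusion actually needed for Theorem~\ref{samecones}.
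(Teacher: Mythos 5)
Your proposal is correct and follows essentially the same route as the paper: both proofs hinge on the map $(s,t)\mapsto\beta_{\gamma(s)}(t)$, with the paper writing out the explicit homotopy $\sigma_t$ from $\gamma$ to $\delta\cdot\gamma\cdot\delta^{-1}$ while you phrase it as the commutator being nullhomotopic because it bounds the image of the square. Your remark about the continuity of $x\mapsto\beta_x$ (which the paper leaves tacit) and your reading of how the nonabelian hypothesis enters only in the sequel (via Corollary~\ref{0homotopic}) are both accurate.
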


\begin{proof}
Define $F(s,t) = \beta_{\gamma(s)}(t)$. Then $F(s,0) = \gamma(s) =
F(s,1)$. Also $F(0,t) = F(1,t) = \beta_{x_{0}}(t) = \delta(t)$. Because of
this last, we can view $F$ as a map from the cylinder $S^1\times [0,1]$ into
$L$. The curve obtained by following $F(0,t)$, $0\le t\le 1$, followed by
$F(s,1)$, $0\le s\le 1$, and then $F(0,1-t)$, $0\le t\le 1$, is the composite
loop $\delta\cdot\gamma\cdot\delta^{-1}$. We show how to deform this curve
continuously to $\gamma$, keeping the basepoint $x_{0}$ fixed throughout the deformation.

Let $\sigma_{t}$ be the curve obtained by following $F(0,\tau)$, 
$0\le\tau\le t$, followed by $F(s,t)$, $0\le s\le 1$, followed by
$F(0,t-\tau)$, $0\le \tau\le t$. Since $\sigma_{0} = \gamma$ and 
$\sigma_{1} = \delta\cdot\gamma\cdot\delta^{-1}$, we have the desired 
deformation. 
\end{proof}

\begin{cor}\label{0homotopic}
If there exists an $x_{0}\in L$ so that $J_{x_{0}}$ cannot be deformed into $I_{x_{0}}$ keeping the endpoints fixed, then $L$ is either compact with nonnegative Euler characteristic, or is contractible, or has the homotopy type of the circle.
\end{cor}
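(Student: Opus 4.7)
The plan is to translate the hypothesis into the statement that $\pi_{1}(L)$ has a nontrivial center, and then to invoke the classification of surfaces with this property.

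First I would observe that the hypothesis, that $J_{x_{0}}$ cannot be deformed onto $I_{x_{0}}$ rel endpoints, is exactly the assertion that the concatenation $\alpha_{x_{0}} = I_{x_{0}}\cdot\ol{J_{x_{0}}}$ is a homotopically nontrivial loop in $L\x I$ based at $(x_{0},0)$. Since $p\colon L\x I\to L$ is a homotopy equivalence (with $x\mapsto (x,0)$ as a homotopy inverse), $p_{*}$ is an isomorphism on $\pi_{1}$; consequently $\delta = \beta_{x_{0}} = p\o\alpha_{x_{0}}$ is nontrivial in $\pi_{1}(L,x_{0})$. By Lemma~\ref{commutes}, this $\delta$ commutes with every element of $\pi_{1}(L,x_{0})$, so the center $Z(\pi_{1}(L,x_{0}))$ contains a nontrivial element.

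What remains is a surface-classification step: I would argue that a connected surface whose fundamental group is nontrivial and has nontrivial center must be the torus, the Klein bottle, or a surface with $\pi_{1}\cong\Z$ (hence of the homotopy type of $S^{1}$: annulus, M\"obius band, or an open version thereof). The standing assumptions of the paper (excluding Klein bottles, and also spheres and projective planes, as leaves) rule out the Klein bottle from arising here, yielding the stated conclusion.

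The hard part will be justifying the classification step. The key algebraic inputs are: every noncompact surface (with or without boundary) deformation retracts onto a graph, so its $\pi_{1}$ is free, and free groups are centerless unless infinite cyclic; every compact surface with nonempty boundary likewise has free $\pi_{1}$; and among closed surfaces, those of genus $\ge 2$ (or nonorientable genus $\ge 3$) admit hyperbolic metrics and hence have centerless $\pi_{1}$. The only remaining cases with nontrivial $\pi_{1}$ and nontrivial center are those listed above, modulo the handful of small closed surfaces (sphere, projective plane) which the paper's standing hypotheses already forbid as leaves.
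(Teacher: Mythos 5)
Your proof takes essentially the same route as the paper's: translate the hypothesis into the statement that $\beta_{x_0}$ is a nontrivial central element of $\pi_1(L,x_0)$ (via Lemma~\ref{commutes}), and then classify surfaces whose fundamental group has nontrivial center. The paper's own classification step is terser --- it says only that the unique closed \emph{orientable} surface with this property is the torus, and that in the noncompact case $L$ is homotopy equivalent to a bouquet of circles, whose $\pi_1$ is free and hence centerless unless it is a single circle. Your version makes the classification more explicit (free groups for noncompact or bounded surfaces; hyperbolicity for closed surfaces of negative Euler characteristic), and in doing so you correctly notice that the Klein bottle, whose $\pi_1$ has nontrivial center $\langle b^2\rangle$, is a genuine third case that the paper's proof silently omits. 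You then appeal to the paper's standing hypothesis (no Klein bottle leaves) to dispose of it. That is a reasonable repair, though it is worth noting a mild inconsistency: the same standing hypothesis also forbids torus leaves, yet the corollary's conclusion explicitly retains the torus; so the corollary is evidently meant as a free-standing statement about surfaces, and as such its conclusion should really read ``torus, Klein bottle, or homotopy circle.'' In the only place the corollary is actually used --- the proof of Theorem~\ref{samecones}, where $L$ is noncompact with nonabelian $\pi_1$ --- neither closed case arises, so the omission is harmless there.
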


\begin{proof}
The hypothesis implies that $\alpha_{x_{0}}$ is essential in $L\times I$, and so $\beta_{x_{0}}$ is essential in $L$ and thus is a nontrivial element of $\pi_{1}(L,x_{0})$. By Lemma~\ref{commutes}, every element of $\pi_{1}(L,x_{0})$ commutes with $\beta_{x_{0}}$. Thus, if $L$ is compact, $\chi(L)\ge0$. If $L$ is not compact and not contractible, $L$ is homotopically equivalent
to a bouquet $B$ of circles. The only bouquet of circles that contains a
nontrivial element of $\pi_1(B,*)$ that commutes with every other element of
$\pi_1(B,*)$ is one circle.
\end{proof}

\begin{proof}[Proof of \emph{Theorem~\ref{samecones}}]
 Let $\ol{\Gamma }\in H_{1}
(\wh{W};\R) $ be a homology direction for $\XX$.

First assume that $\ol{\Gamma }$ is not represented by a closed orbit in
$\XX$ and write 
 $$
\ol{\Gamma } = \lim_{k\to\infty}\frac{1}{\tau _{k}}[\Gamma' _{k}],
$$ a limit in $H_{1} (M)$ of the homology classes of long, almost closed
orbits. The numbers $\tau _{k}$ are the ``lengths'' of $\Gamma _{k}$
(measured by the transverse, invariant measure for $\FF|W$) and
increase to $\infty$ with $k$.  Thus, except for a uniformly bounded arc in $L$,  $\Gamma '_{k}$ is a
sequence of segments, $\sigma _{1},\dots,\sigma _{n_{k}}$ of an orbit in
$\XX$, each starting and ending in $L$.  There is a corresponding
sequence $\sigma' _{1},\dots,\sigma' _{n_{k}}$ of segments of an orbit in
$\XX_{\sharp}$ such that $\sigma _{i}\text{ and }\sigma '_{i}$ have the same
endpoints and the same lengths, $1\leq i\leq n_{k}$. By Corollary~\ref{0homotopic}, these respective segments are
homotopic by a homotopy that keeps their endpoints fixed.  Thus, we see that $\ol{\Gamma }$ is also a homology direction for $\XX_{\sharp}$.  In the case that
$\ol{\Gamma }$ is represented by a closed orbit, the argument adapts and is
simpler. Finally, the roles of $\XX\text{ and }\XX_{\sharp}$ can be interchanged,
proving that the two laminations have the same homology directions.
\end{proof}

\begin{prop}\label{C0} Let $\LL\text{ and }\LL_{\sharp}$ be two  $1$-dimensional  
foliations strongly transverse to $\FF|\wh{W}$.  Suppose that the respective core
laminations $\XX\text{ and }\XX_{\sharp}$ are $\CO$-isotopic by an isotopy
$\varphi _{t}:\XX\hra M$, $\varphi _{0}=\id_{\XX}$ and $\varphi _{1} (\XX)=\XX_{\sharp}$, such that $\varphi _{t} (x)$
lies in the same leaf of $\FF|W$ as $x$, for $0\le t\le 1$,
$\forall\,x\in \XX$. Then $\C'_{\XX}=\C'_{\XX_{\sharp}}$ and $\C_{\XX}=\C_{\XX_{\sharp}}$.
\end{prop}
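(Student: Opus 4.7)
The plan is to reduce both cone equalities to the statement that the homology directions of $\XX$ and $\XX_{\sharp}$ coincide. Parametrize $\LL$ and $\LL_{\sharp}$ as flows $\Phi_{t}$ and $\Phi^{\sharp}_{t}$, each preserving $\FF|W$ at unit rate. I would first verify that the hypothesis forces $\varphi_{1}$ to conjugate these flows on the core laminations. As a homeomorphism $\varphi_{1}:\XX\to\XX_{\sharp}$ of $1$-dimensional laminations, $\varphi_1$ sends each leaf $\ell\ss\XX$ of $\LL$ to a leaf $\varphi_{1}(\ell)\ss\XX_{\sharp}$ of $\LL_{\sharp}$. Since the fibration $\pi:W\to\SI$ has the leaves of $\FF|W$ as fibers and $\varphi_{t}$ moves each point within its $\FF|W$-leaf, $\pi\o\varphi_{1}=\pi$ on $\XX$. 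Both flows increase the local lift $\wt\pi$ at unit rate along their respective leaves, so for any $x\in\XX$ the two curves $s\mapsto\varphi_{1}(\Phi_{s}(x))$ and $s\mapsto\Phi^{\sharp}_{s}(\varphi_{1}(x))$ lie in the common $\LL_{\sharp}$-leaf $\varphi_{1}(\ell)$, start at $\varphi_{1}(x)$, and carry identical $\wt\pi$-values. Since $\wt\pi$ is a local diffeomorphism along that leaf, these curves coincide, giving $\varphi_{1}\o\Phi_{s}=\Phi^{\sharp}_{s}\o\varphi_{1}$ on $\XX$.

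Next, let $\ol{\Gamma}=\lim_{k\to\infty}(1/\tau_{k})[\Gamma'_{k}]$ be a homology direction of $\XX$, where $\Gamma_{k}$ is an $\LL$-orbit segment from $x_{k}$ to $y_{k}=\Phi_{\tau_{k}}(x_{k})$ closed up to $\Gamma'_{k}$ by a uniformly bounded arc. By the previous step, $\Gamma^{\sharp}_{k}:=\varphi_{1}(\Gamma_{k})$ is precisely the $\LL_{\sharp}$-orbit segment from $\varphi_{1}(x_{k})$ of length $\tau_{k}$, so closing it by a uniformly bounded arc produces a long, almost closed orbit $\Gamma^{\sharp\prime}_{k}$ of $\XX_{\sharp}$. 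The isotopy itself supplies a $2$-chain $\Sigma_{k}(s,t)=\varphi_{t}(\Gamma_{k}(s))$ whose boundary, with appropriate signs, is $\Gamma^{\sharp}_{k}-\Gamma_{k}$ together with the two side arcs $t\mapsto\varphi_{t}(x_{k})$ and $t\mapsto\varphi_{t}(y_{k})$. Because $\varphi$ is continuous on the compact set $\XX\x[0,1]$, these side arcs, together with the closing arcs, form a bounded family in $M$. Hence $[\Gamma^{\sharp\prime}_{k}]-[\Gamma'_{k}]$ is uniformly bounded in $H_{1}(\wh{W})$, and dividing by $\tau_{k}\to\infty$ yields $\lim_{k\to\infty}(1/\tau_{k})[\Gamma^{\sharp\prime}_{k}]=\ol{\Gamma}$. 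So $\ol{\Gamma}$ is a homology direction of $\XX_{\sharp}$, proving $\C'_{\XX}\ss\C'_{\XX_{\sharp}}$.

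The reverse inclusion follows by running the same argument with the inverse isotopy $\psi_{t}(y):=\varphi_{1-t}(\varphi_{1}^{-1}(y))$, which satisfies the hypothesis with the roles of $\XX$ and $\XX_{\sharp}$ exchanged. Hence $\C'_{\XX}=\C'_{\XX_{\sharp}}$, and the equality $\C_{\XX}=\C_{\XX_{\sharp}}$ follows immediately since each is defined as the dual cone to $\C'$ under the non-negativity pairing.

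The step most likely to hide subtlety is the first one, namely the claim that $\varphi_{1}$ genuinely conjugates the two flows on the core laminations. Once this leaf-by-leaf compatibility is established, the rest is a routine use of the $2$-chain $\Sigma_{k}$ to convert the isotopy into a uniformly bounded homological correction, in the same spirit as the argument used for Theorem~\ref{samecones}.
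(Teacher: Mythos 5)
Your argument is correct and matches the paper's (much shorter) proof in essence: both parametrize the two $1$-dimensional foliations as flows advancing the same transverse invariant measure for $\FF$ at unit rate, use the leafwise invariance of $\FF$ under the isotopy to conclude that the flow parameter is preserved, deduce that long, almost closed orbits of $\XX$ are isotoped to long, almost closed orbits of $\XX_{\sharp}$, and finish by noting that homotopic singular cycles are homologous (with the uniformly bounded correction terms washed out after dividing by $\tau_{k}$). You have also correctly flagged the one delicate point---that $\varphi_{1}$ must actually carry $\LL$-orbit segments in $\XX$ to $\LL_{\sharp}$-orbit segments in $\XX_{\sharp}$, which requires that $\varphi_{1}$ respect the leaf decompositions of the two laminations and not merely be a homeomorphism of the underlying sets---a hypothesis the paper likewise takes for granted and which holds in the intended application (Corollary~\ref{conj}).
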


\begin{proof}  Parametrize the two foliations as  flows using the
same transverse invariant measure $ \theta$ for $\FF$.  Since $\FF$ is
leafwise invariant under the  isotopy, the flow parameter is
preserved and the long, almost closed orbits of $\XX$ are isotoped to
the long, almost closed orbits of $\XX_{\sharp}$.  Homotopic singular cycles
are homologous and the assertions follow.
\end{proof}

The
property that points of $W$ remain in the same leaf of $\FF|W$
throughout the isotopy will be indicated by saying that
$\FF|W$ is leafwise invariant by $\varphi _{t}$.

\begin{cor}\label{conj}
Let $f:L\ra L$ be the  endperiodic first return homeomorphism induced on a
 leaf $L$ of $\FF|W$ by a strongly transverse  $1$-dimensional
foliation $\LL_{f}$ of class $\CO$.  If $\varphi_{t}:L\ra L$ is an isotopy of $\phi_{0}=\id$ to a homeomorphism 
$\phi=\phi_{1}$, then $\C'_{f}=\C'_{\varphi f\varphi^{-1}}$.
\end{cor}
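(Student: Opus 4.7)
The plan is to reduce the statement to Proposition~\ref{C0}. It suffices to exhibit a transverse 1-dimensional $\CO$-foliation $\LL_{g'}$ of $\wh W$ whose first-return on $L$ is $g' = \varphi_1 g \varphi_1^{-1}$, and whose core lamination $\XX_{g'}$ is $\CO$-isotopic to $\XX_g$ by an isotopy leaving each leaf of $\FF|W$ invariant. Granted such an $\LL_{g'}$, Proposition~\ref{C0} yields $\C'_{\XX_g} = \C'_{\XX_{g'}}$, and this is the same as $\C'_g = \C'_{g'}$, since by Theorem~\ref{samecones} together with the discussion of Subsection~\ref{indchoice} the cones $\C'_g$ and $\C'_{g'}$ are independent of the choice of the foliations $\LL_g$ and $\LL_{g'}$ inducing these monodromies.

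To construct $\LL_{g'}$, I would first build an ambient $\CO$-isotopy $\Psi_t: W \to W$ with $\Psi_0 = \id$, $\Psi_1|_L = \varphi_1$, and each $\Psi_t$ preserving every leaf of $\FF|W$ setwise. Using the flow of $\LL_g$, fix a collar neighborhood $U \cong L \times (-\epsilon, \epsilon)$ of $L$ in $W$ in which $\LL_g$-leaves appear as vertical fibers $\{x\} \times (-\epsilon, \epsilon)$ and $\FF|W$-leaves as horizontal slices $L \times \{s\}$. Pick a continuous bump $\rho: (-\epsilon, \epsilon) \to [0, 1]$ with $\rho(0) = 1$ and $\rho \equiv 0$ outside $(-\epsilon/2, \epsilon/2)$, and set
\[
\Psi_t(x, s) = (\varphi_{t\rho(s)}(x),\, s)
\]
on $U$, extended by the identity on $W \setminus U$. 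Each $\Psi_t$ is a self-homeomorphism of $W$ that fixes every horizontal slice setwise, hence every leaf of $\FF|W$.

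Now declare $\LL_{g'} := \Psi_1(\LL_g)$. Off $U$ this agrees with $\LL_g$, so it extends appropriately to $\wh W$ with the same behavior near $\tb \wh W$. Tracing a leaf of $\LL_{g'}$ from $x \in L$ upward through $U$, out along the unchanged $\LL_g$-flow around $W$, and back down through $U$ into $L$, the first-return map computes to $\Psi_1 \circ g \circ \Psi_1^{-1}|_L = g'$. The core lamination is $\XX_{g'} = \Psi_1(\XX_g)$, because $\Psi_1$ is the identity outside the collar $U$: an $\LL_{g'}$-leaf escapes to $\tb \wh W$ if and only if the corresponding $\LL_g$-leaf does. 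The restrictions $\Psi_t|_{\XX_g}: \XX_g \hookrightarrow W$ then furnish the required $\CO$-isotopy from $\XX_g$ to $\XX_{g'}$, with each trajectory $t \mapsto \Psi_t(x)$ confined to a single $\FF|W$-slice $L \times \{s\}$, so that Proposition~\ref{C0} applies.

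The main subtlety I anticipate is the careful verification of the identification $\XX_{g'} = \Psi_1(\XX_g)$, i.e.\ that no $\LL_{g'}$-leaf acquires new escape behavior relative to $\LL_g$ under the collar modification. Since $\Psi_1$ is supported in a relatively compact collar of a single leaf of $\FF|W$ inside $W$ and is the identity elsewhere, this verification should reduce to a straightforward bookkeeping of orbits once the product structure on $U$ and the support of $\rho$ are fixed.
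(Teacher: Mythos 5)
Your proposal is correct and follows essentially the same route as the paper: construct a leaf-preserving ambient isotopy supported in a normal product collar of $L$ in $W$ that deforms $\LL_{g}$ to a transverse $1$-dimensional foliation inducing first return $\varphi_{1}g\varphi_{1}^{-1}$, then invoke Proposition~\ref{C0} together with the independence of the cones from the choice of transverse foliation (Theorem~\ref{samecones}). One small inaccuracy in the final paragraph: the collar $U\cong L\times(-\epsilon,\epsilon)$ is \emph{not} relatively compact, since $L$ is noncompact; the correct justification that $\XX_{g'}=\Psi_{1}(\XX_{g})$ is that $\Psi_{1}$ carries $\LL_{g}$-leaves to $\LL_{g'}$-leaves while preserving each leaf of $\FF|W$, and $\Psi_{1}|_{L}=\varphi_{1}$ is isotopic to the identity hence preserves the ends of $L$, so $X_{g'}=\varphi_{1}(X_{g})$ and the saturations match.
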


\begin{proof}
Let $N$ be a closed normal neighborhood of $L$ in $W$ which is a foliated product with leaves the leaves of $\FF$ meeting $N$ and normal fibers the arcs of $\LL\cap N$.  Write $N=L\x[-\epsilon,\epsilon]$ and consider each arc $\ell_{x}$ of a leaf of $\LL$ issuing in the positive direction from $(x,\epsilon)\in L\x\{\epsilon\}$ and first returning to $N$ at $(f(x),-\epsilon)$.  In $N$, replace each arc $\tau_{y}=\{y\}\x[-\epsilon,\epsilon]$ of $\LL\cap N$ with an arc $\sigma_{y}:[-1,1]\to N$ defined by 
\begin{align*}
\sigma_{y}(t) &= (\phi_{t+1}(y),\epsilon t),\,-1\le t\le0,\\
\sigma_{y}(t) &= (\phi_{t}^{-1}(\phi_{1}(y)),\epsilon t),\,0\le t\le1.
\end{align*}  Notice that this still connects $(y,-\epsilon)$ to $(y,\epsilon)$.  We construct an ambient leaf-preserving isotopy $\psi$, supported in $N$ and carrying each $\tau_{y}$ to $\sigma_{y}$, by
\begin{align*}
\psi_{s}(y,\epsilon t)  &= (\phi_{s(t+1)}(y),\epsilon t),\,-1\le t\le0,\,0\le s\le1,\\
\psi_{s}(y,\epsilon t) &= (\phi_{st}^{-1}(\phi_{s}(y)),\epsilon t),\,0\le t\le1,\,0\le s\le1.
\end{align*}
We obtain $\LL'$ from $\LL$ by replacing $\tau_{y}$ with $\sigma_{y}$, $\forall y\in L$, observing that the monodromy induced by $\LL'$ on $L=L\x\{0\}$ is $\phi\circ f\circ\phi^{-1}$.  The assertion follows by Proposition~\ref{C0}.
\end{proof}

\subsection{Foliated forms}\label{fforms}

In this subsection we assume that the smooth $1$-dimensional foliation $\wh\LL$ on $\wh W$ is strongly transverse to the foliation $\wh\FF$ on $W$, relatively almost without holonomy and induced by the foliation $\FF$ of $M$. $\wh\FF$ may be dense leaved or of relative depth~$1$.  We will need that $\wh\LL\sm\XX$ is smooth and that $\LL$ itself is of class $\COO$.  Our discussion will be valid for all dimensions $\ge3$ and without restrictions on the topology of leaves.

\begin{defn}
A closed, nonvanishing form $\eta\in A^{1}(W)$, defining a foliation $\FF$ of $W$, \emph{blows up smoothly} at $\tb\wh W$ if $\eta_{x}$ becomes unbounded as $x\to\tb\wh W$ and $\FF$ completes to a foliation $\wh\FF$ of $\wh W$ by adjunction of the components of $\tb\wh W$ as leaves, $\wh\FF$ being of class $\Ci$ and $\Ci$-flat at $\tb\wh W$.
\end{defn}

\begin{defn}\label{ff}
A 1-form $\eta\in A^{1}(W)$ is a \emph{foliated form} if it is closed, nowhere vanishing and blows up smoothly at $\tb\wh W$.
\end{defn}

We will prove the following.

\begin{theorem}\label{folforms}
 The  open cone $\intr\C_{\XX}$ consists of  classes in $H^{1}(\wh{W})$ that can be represented by foliated forms transverse to $\LL$.  Similarly, the interior of $\Ck_{\XX}$ consists of classes represented by $\LL$-transverse foliated forms $\eta$ which become exact in $W\sm K_{i}$ for large enough values of $i$, defining a product foliation there.  If  $\LL=\LL_{f}$ for the monodromy of a smooth foliation $\FF$ of relative depth one, this product foliation can be chosen to agree with $\FF$ in this region.
 \end{theorem}
 
 Remark that foliated forms only live in $W$, not in $\wh W$, but $\hI(\wh W)=\hI(W)$  and any form representing a class in $W$ can be taken to be equal to that representing the class in $\wh W$ outside of any small neighborhood of $\tb\wh W$.  If the foliated form is exact in the arms of some octopus decomposition, it is clear that it represents a class in $\hk(\wh{W})$.

Thus, we call $\C_{\XX}$ and $\Ck_{\XX}$  foliation cones associated to $\XX$.  If $\XX=\XX_{f}$, where $\wh\FF$ is of relative depth~$1$ with monodromy $f$,  they are foliation cones associated to $f$.  The rays out of the origin meeting the interior of $\C_{\XX}$ or of $\Ck_{\XX}$  correspond to foliations $\wh\HH$ of $\wh W$ that have holonomy only along $\tb\wh W$ ($\Ci$ tangent to the identity), are transverse to $\wh\LL$ and extend $\FF|(M\sm W)$ over $M$ to a foliation of class $\COO$.  Even if $\FF$ was of class $\Ci$ and $\Ci$-flat along the border leaves on $W$, the extension may fail to be smooth unless the foliations $\wh\FF$ and $\wh\HH$ are products in the arms of an octopus decomposition.  The rational rays correspond to foliations defined by forms $\eta$ with period group infinite cyclic, defining foliations of $\wh W$ that fiber $W$ over $\SI$.  The rest of the rays in $\intr\C_{\XX}$ consist of classes  having period group dense in $\R$ and so define foliations that are dense leaved in $W$.  Recall that compactness of junctures in $\CII$ foliations  forces   $\wh\FF$ to be trivial in the arms when $\FF$ is smooth and $\FF|W$ fibers $W$ over $\SI$.

\begin{proof}[Proof of \emph{Theorem~\ref{folforms}}]
  Fix a class $[\eta]\in\intr\C_{\XX}$, the 1-form $\eta\in[\eta]$ being defined on $\wh W$ and transverse to $\XX$ (Theorem~\ref{intr:cone}). If $\FF$ is of class $\Ci$, select this form to be compactly supported in $W$, $[\eta]\in\Ck_{\XX}$. Select a neighborhood $U$ of $\XX$ such that $\eta\tr\LL|U$.  We need to show that $\eta$ is cohomologous to a foliated form. Note that, if $\eta$ is compactly supported in $K_{i}$, the foliated form will  automatically be exact in $W\sm K_{i}$.

Given $x\in W\sm\XX$, 
let $s(t)$ be the smooth trajectory along $\wh\LL$ in  $W\sm\XX$, smoothly reparametrized so that $x=s(0)$ and $s(\pm1)\in F_{\pm}$.  Here, $F_{+}$ is the union of outwardly oriented leaves of $\tb\wh{W}$ and $F_{-}$ the union of inwardly oriented ones. For some choices of $x$ both signs may be possible and for others only one. For definiteness, consider the case $s(-1)\in F_{-}$.  Define a tubular neighborhood $V_{x}=D\x[-1,3/4)$ of $s$ so that $s(t)=(0,t)$ and $\{z\}\x[-1,3/4)$ is an arc in $\wh\LL$, $\forall z\in D$.  Here, $D$ is the open unit $(n-1)$-ball with polar coordinates $(r, \theta_{1},\dots,\theta_{n-2})$, $0\le r<1$.  This gives cylindrical coordinates $(t,r, \theta_{1},\dots,\theta_{n-2})$ on $V_{x}$.  On $V_{x}$, define a smooth, real valued function  $$\ell_{x}(t,r, \theta_{1},\dots,\theta_{n-1})=\ell_{x}(t,r)=\ell(t)\lambda(r),$$ where $\ell(t)=t-1$, $-1\le t\le1/2$, and damps off to 0 smoothly and with positive derivative  as $t\to3/4$, and $ \lambda(r)\equiv1$, $0\le r\le1/2$, and damps off to  0 smoothly  through positive values as $r\to1$.  Thus, $ \ell _{x}(t,r)$ vanishes outside of $V_{x}$ and $d \ell _{x}$  is transverse to $\wh\LL$ in $V_{x}$.   Let $V'_{x}\subset V_{x}$ be the neighborhood of $x$ defined by $-1\le t<1/2$ and $0\le r<1/2$.   Perform an analogous construction for trajectories out of $x$ with $s(1)\in F_{+}$.

Suitable choices of these open cylinders  (using the local compactness) give a locally finite open cover $\{U,V'_{x_{1}},V'_{x_{2}},\dots\}$ of $\wh W$.  For suitable choices of positive constants $c_{i}$, set $ \ell=\sum_{i=1}^{\infty} c_{i}\ell_{x_{i}}$, a smooth function, supported in $W\sm\XX$, with $d\ell\tr\LL$ outside of a compact neighborhood of $\XX$ in $U$.  Since $\eta$ is bounded in any compact region of $\wh W$ and is transverse to $\LL$ in $U$, we can choose the coefficients $c_{i}>0$ large enough  that $\eta'=\eta+ d \ell$ is a closed form in $\wh W$, cohomologous to $\eta$ and transverse to $\wh\LL$.  This form might be badly behaved at $\tb\wh W$, hence we must modify it by adding on a suitable exact form supported in a neighborhood of the boundary leaves.

Let $V=F_{-}\x[0,1)$ be a normal neighborhood of $F_{-}$ in $\wh W$, the fibers being arcs in leaves of $\wh\LL$. 
Let $\lambda$ be a smooth function, supported in the deleted  normal neighborhood $V\sm F_{-}$, depending only on the normal parameter $t$, and having $\lambda'(t)\ge0$,  with $\lambda'(t)=e^{1/t^{2}}$  near $F_{-}$.  Make a similar construction near $F_{+}$.  Now $\wt{\eta}=\eta'+d\lambda$ is everywhere transverse to $\LL$, hence nonsingular on $W$, it is cohomologous to $\eta'$ and it becomes unbounded at $\tb \wh W$.  We must show that $\ker\wt\eta$ extends $\Ci$-smoothly to a plane field on $\wh W$ by adding on the tangent planes to $\tb \wh W$.  For this, set $\ol{\eta}=\wt{\eta}/\lambda'=\eta'/\lambda'+dt$, a form defined  on a small enough deleted neighborhood  of $F_{-}$.  This form is no longer closed but satisfies $\ker\ol{\eta}=\ker\wt\eta$ in that neighborhood.  Since $\eta'$ is bounded on $\wh W$, it is clear that $\ol{\eta}$ approaches $dt $ in the $\Ci$ topology as $t\to0$ and that the resulting foliation of $\wh W$ is of class $\Ci$ which is $\Ci$-trivial at $\tb\wh W$.
 After a similar construction in a normal neighborhood of $F_{+}$, we obtain a foliated form, again denoted by $\wt\eta$, transverse to $\LL$.
 
 Note that, if we choose $\eta$ compactly supported, then $\wt{\eta}$ is exact outside of some compact region  in $\wh W$.  If $\FF$ is smooth, we can choose $\wh\LL$ to be strongly transverse to $\wh\FF$ and assume that $\FF|W$ is defined by a foliated form $\omega$ which is exact outside of a compact region.  In this case,  we will choose $\wt\eta$ to agree with this exact form.  For $i$ large, $\omega|({W}\sm K_{i})=d\gamma$ and $\supp\eta\subset K_{i}^{\circ}$.  Choose $r>0$ so that $\{U,V'_{x_{1}},V'_{x_{2}},\dots,V'_{x_{r}}\}$ covers $K_{i+1}$.  Smoothly damp $\gamma$  off to $0$ in a neighborhood $N$ of $K_{i}$ in $K_{i+1}$ so that it becomes $0$ near $K_{i}$ and is unchanged in $\wh{W}\sm N$,  extending it by $0$ over $K_{i}$. Call this new function $\wt\gamma$.  Similarly, damp $\lambda$ off to $0$ in $V\sm K_{i+1}$ so that $d\lambda$ remains transverse to $\LL$ in $V\sm K_{i+1}$ and becomes $0$ in $V\sm K_{i+2}$, calling this new function $\wt\lambda$. Now, $\wt\eta=\eta+\sum_{j=1}^{r}c_{j}d\lambda_{x_{j}}+d\wt\lambda+d\wt\gamma$ is as desired for suitably large choices of the constants $c_{j}.$  
The proof of Theorem~\ref{folforms} is complete.  
\end{proof}

\section{Foliations of class $C^{2}$}\label{HMcones}

In this section, we prove Theorem~\ref{conesmooth}. We restrict our attention to the case that $\FF$ is smooth,  $\dim M=3$ and all compact leaves have negative Euler characteristic.  Consequently, if $\wh\FF$ is a foliation of $\wh W$ of  relative depth one and $L$ is a leaf of $\FF|W$, the monodromy $f:L\to L$ is endperiodic~\cite[Theorem~13.16]{cc:hm} and $L$ has no simple ends.  

\begin{rem}

Most of~\cite{cc:hm} concerns endperiodic automorphisms of surfaces with finite endset. However,~\cite[Section~13.4]{cc:hm} extends the definitions given for surfaces with finite endsets to surfaces with infinite endsets and with these definitions most of the results of ~\cite{cc:hm} are true. We use the  terminolgy, notation, and results of~\cite{cc:hm} for surfaces with infinite endset without further comment.

\end{rem}

\begin{rem}

Our treatment of Handel-Miller theory in this paper is a brief outline. The reader should consult~\cite{cc:hm} for details.

\end{rem}

Given an endperiodic automorphism $f:L\to L$, Handel-Miller theory~\cite[Section~4]{cc:hm} gives a pair of transverse, geodesic  bilaminations $(\Lambda_{+},\Lambda_{-})$ associated to $f$. In~\cite[Section~10]{cc:hm} we give a set of axioms and show that the pseudo-geodesic bilaminations $(\Lambda_{+},\Lambda_{-})$ associated to $f$ satisfying the axioms  are ambiently isotopic to the Handel-Miller geodesic ones.   In particular, they are unique up to ambient isotopy. The following is~\cite[Definition~10.7]{cc:hm}.

\begin{defn}

The pseudo-geodesic bilaminations $(\Lambda_{+},\Lambda_{-})$ in the previous paragraph are called the \emph{Handel-Miller laminations} associated to $f$,

\end{defn}

The following is~\cite[Definition~12.4]{cc:hm}.

\begin{defn}
Suppose $\wh\FF$ is a foliation of $\wh W$ of  relative depth one and $L$ is a leaf of $\FF|W$ with endperiodic  monodromy $f:L\to L$. If $\wh\LL$ is a $1$-dimensional foliation transverse to $\wh\FF$ inducing
 monodromy $h:L\to L$ preserving the Handel-Miller laminations $(\Lambda_{+},\Lambda_{-})$ associated to $f$, then $h$ is called  the \emph{Handel-Miller monodromy} of $L$. 
\end{defn}

\begin{rem}
Given the laminations $(\Lambda_{+},\Lambda_{-})$ associated to the endperiodic monodromy $f:L\to L$, by~\cite[Theorem~8.1, Corollary~10.11]{cc:hm}, there exists   Handel-Miller monodromy $h:L\to L$. 
\end{rem}

\begin{rem}
By~\cite[Theorem~11.1]{cc:hm} the laminations $(\Lambda_{+},\Lambda_{-})$ can be chosen to be smooth and $h$ a diffeomorphism. We will always do this and take Handel-Miller monodromy to be smooth without further comment. 
\end{rem}

\begin{rem} 
In fact it will be necessary to further tighten $h$ in a bounded open subset of $L$, as we will show shortly.  This will give what we will call the \emph{tight} Handel-Miller monodromy $h$ (Definition~\ref{tight}).
\end{rem}

By the above remarks, there is a smooth $1$-dimensional foliation $\LL_{h}$, transverse to $\FF$ and inducing the tight  Handel-Miller monodromy.  We will show that the  cone $\Ck_{h}$ is the maximal foliation cone corresponding to the isotopy class of $f$.  Note that ``the'' tight Handel-Miller monodromy is a misnomer.  There are infinitely many that are mutually isotopic, but we will show that  $\Ck_{h}$ is independent of the choice.  For this, we need to show that the homology cone $\C'_{h}$ is independent of the choice.

\begin{defn}

The homology cone $\C'_{h}$ is called the \emph{Handel-Miller  cone} where $h$ is tight (and smooth) Handel-Miller monodromy.

\end{defn}

We need to investigate the asymptotic cycles for $\XX_{h}$ more carefully.

\subsection{The invariant set}\label{invset}
The lamination $\XX_{h}$ is the $\LL_{h}$-saturation of the invariant set $X_{h}=L\sm(\UU_{+}\cup\UU_{-})$, the set of points that do not escape to ends of $L$ under forward or backward iteration of $h$.  Set $ X^{*}_{h}=\Lambda_{+}\cap \Lambda_{-}\subseteq X_{h}$, the so called  \emph{meager invariant set}.  Generally, $X_{h}^{*}\ne X_{h}$.  Referring to~\cite{cc:hm} (where $X_{h}$ was denoted by $\II$~\cite[Definition~5.11]{cc:hm} and $X_{h}^{*} $ by $\KK$~\cite[Definition~9.1]{cc:hm} ), we describe the set $X_{h}\sm X^{*}_{h}$.

The complement $L\sm (\Lambda_{+}\cup \Lambda_{-})$ generally has infinitely many components, ``most'' of which lie in the $\pm$-escaping set $\UU_{+}\cup\UU_{-}$, hence are disjoint from $X_{h}$.  Those components that do not lie in the $\pm$-escaping set are the nuclei $N_{i} $ of finitely many principal regions $P_{i}$, $1\le i\le n$.    The nucleus $N_{i}$  is a compact, connected surface meeting $X^{*}_{h}$ in finitely many vertices, its boundary being piecewise smooth, made up alternately of arcs of leaves of $\Lambda_{+}$ and arcs of leaves of $\Lambda_{-}$.  The principal regions and their nuclei are permuted amongst themeselves by $h$. For a complete treatment of principal regions and their nuclei, see~\cite[Subsection~6.5]{cc:hm}. The set $N=N_{1}\cup\cdots\cup N_{n}$ is called the \emph{nuclear invariant set}.

\begin{lemma}
  The invariant set $X_{h}$ is the union of the nuclear invariant set $N$ and the meager invariant set $X^{*}_{h}$.
\end{lemma}

We are going to define a ``tightening'' of $h|N$ (Definition~\ref{tight}) via the Nielsen-Thurston theory of compact surface automorphisms. Using this terminology, we can state the following basic result whose proof is given in Sections~\ref{nuclear} and~\ref{polyh}.

\begin{theorem}\label{finitegen}
If $h$ is a tight $\COO$ Handel-Miller monodromy, there is a minimal finite set of asymptotic cycles $\mu_{1},\mu_{r},\dots,\mu_{r}$ such that $\C'_{h}\subset H_{1}(\wh W)$ is the closed convex cone which is the topological closure in $H_{1}(\wh W)$ of the non-negative linerar combinations of $[\mu_{1}],[\mu_{2}],\dots,[\mu_{r}]$.
\end{theorem}

This has the following rather obvious corollary.

\begin{cor}\label{conepoly}
The cones $\C'_{h}$, $\C_{h}$ and $\C^{\kappa}_{h}$ are polyhedral.  The latter two are defined by the linear inequalities $[\mu_{i}]\ge0$ and have finitely many codimension~one faces defined by the linear equations $[\mu_{i}]=0$, $1\le i\le r$.
\end{cor}

Since $\XX_{h}$ is the union of $\XX_{N}$, the $\LL_{h}$-saturation of $N$, and $\XX^{*}_{h}$, the $\LL_{h}$-saturation of $X^{*}_{h}$, it will be enough to show that each of these sublaminations contributes finitely many generators. We begin with $\XX_{N}$.

\subsection{The nuclear invariant set ${N}$}\label{nuclear}
 
 There are cases.
\begin{enumerate}
\item A nucleus $N_{i}$ may be a disk.
\item  A nucleus $N_{i}$ may be an annulus or M\"obius strip.
\item A nucleus $N_{i}$ may have negative Euler characteristic.
\end{enumerate}

The $\LL_{h}$-saturation $\XX_{N_{i}}$ of a nucleus $N_{i}$ is the same   as the $\LL_{h}$-saturation  of the $h$-orbit of $N_{i}$.

\begin{lemma}
If $N_{i}$ is a disk, $\XX_{N_{i}}$ contributes at most one generator to the cone $\C'_{h}$. 
\end{lemma}

\begin{proof}
Indeed, this saturation of $N_{i}$ is a solid torus and has $1$-dimensional first homology.  
\end{proof}

\begin{rem}
We are not trying to produce a finite set of linearly independent generators.  In the case that $N_{i}$ is a disk, a spanning asymptotic class is represented by the closed, oriented $\LL_{h}$-orbit of a vertex of $\bd N_{i}$ and this orbit is also an asymptotic cycle for $\XX^{*}_{h}$.
\end{rem}

\begin{lemma}\label{AM}
If $N_{i}$ is an annulus or M\"obius strip, $\XX_{N_{i}}$ contributes at most two generators to the cone $\C'_{h}$.
\end{lemma}

\begin{proof}
Indeed, this saturation of $N_{i}$ has the homotopy type of the torus, hence has 2-dimensional homology. The corresponding cone of asymptotic classes is at most $2$-dimensional.
\end{proof}
 
We analyze Case~(3). Enumerate the nuclei with negative Euler characteristic as $N_{1},\dots,N_{m}$.  This set of nuclei is permuted by $h$.  We will isotope $h$ by an isotopy supported on the union  of these nuclei to ``Nielsen-Thurston'' form.

For each of the nuclei $N_{i}$, $1\le i\le m$,  let $p_{i}>0$ denote the $h$-period of $N_{i}$, the smallest positive integer such that $h^{p_{i}}(N_{i})=N_{i}$.   We want to apply the Nielsen-Thurston theory of automorphisms of surfaces~\cite{bca,FLP,HandT} to isotope $h$ by an isotopy supported in $N_{1} \cup N_{2}\cup\cdots  N_{m}$ to what  we will call Nielsen-Thurston form.  There is a small difficulty in that this theory was developed for smooth surfaces and $\bd N_{i}$ is only piecewise smooth.  We get that problem out of the way first.

 If $\sigma$ is a component of $\bd N_{i}$, there is a unique simple, closed, $2$-sided geodesic $\rho_{\sigma}\subset\intr N_{i}$ which cobounds an annulus with $\sigma$.  While $h(\rho_{\sigma})$ may not be $\rho_{h(\sigma)}$, they are freely homotopic to $h(\sigma)$, hence to each other, and a theorem of D.~B.~A.~Epstein~\cite[Theorem~2.1]{Epstein:isotopy} gives a smooth ambient isotopy $\phi$, compactly supported in $h(N_{i})$, such that $\phi(h(\rho_{\sigma}))=\rho_{h(\sigma)}$.  Thus, by replacing $h$ with the diffeomorphism $\phi\circ h$ isotopic to $h$ and agreeing with $h$ outside $N_{i}$, we can assume that  $h(\rho_{\sigma})=\rho_{h(\sigma)}$.  These isotopies can be chosen to have disjoint supports, hence we assume that $h$ permutes the set of loops $\rho_{\sigma}$ as $\sigma$ ranges over the boundary components of the nuclei in the $h$-orbit of $N_{i}$. 
Let $N_{i}'$ denote the connected subsurface of $N_{i}$ bounded by the circles $\rho_{\sigma}$, as $\sigma$ ranges over the components of $\bd N_{i}$. Thus, $N'_{i}$ has smooth boundary and is obtained from  $N_{i}$  by trimming off annular collars. To distinguish $N'_{i}$ from the nucleus $N_{i}$, we refer to it as the \emph{core} of the principal region and remark that $h^{p_{i}}  $ carries the core onto itself.

By the Nielsen-Thurston theory, there is an integer $k_{i}>0$ and an isotopy of $h^{p_{i}}|N'_{i}$ to a diffeomorphism $\theta_{i}$, such that there is a family of simple closed curves in $N'_{i}$, permuted among themselves by $\theta_{i}$,  splitting that surface into subsurfaces, each of which is invariant under $\theta_{i}^{k_{i}}$. On each of these subsurfaces, $\theta_{i}^{k_{i}}$ is  a pseudo-Anosov ``diffeomorphism'' or a periodic diffeomorphism.  In the pseudo-Anosov case, $\theta_{i}^{k_{i}}$ is a diffeomorphism outside of a finite set of multi-pronged singularities, at which it cannot even be $\CI$. Actually, the reducing circles need to be slightly thickened to a  family of $\theta_{i}^{k_{i}}$-invariant annuli in order that $\theta_{i}$ be smooth, or even continuous on $N'_{i}$.

\begin{rem}
While the pseudo-Anosov automorphism $\theta_{i}^{k_{i}}$ is  not quite a diffeomorphism, it is close enough for our purposes.  In fact, the $\LL_{h}$-saturation of the pseudo-Anosov subsurface  is of class $\COO$, the well understood structure of $\LL_{h}$ being smooth in the complement of the singular orbits and of class $\COO$ at each of these orbits~\cite[Appendix~B]{cc:LB}.  Following standard usage, we call it a pseudo-Anosov diffeomorphism.
\end{rem}

\begin{lemma}
There is an isotopy of $h$, supported in the nuclei, which produces the isotopies of $h^{p_{i}}|N'_{i}$ in the Nielsen-Thurston theory.
\end{lemma}

\begin{proof}
First of all, we must deal with the fact that $\theta_{i}$ and $h^{p_{i}}$ may not agree on $\bd N'_{i}$.
Fix a choice of $N_{i}$ and, for notational simplicity, set $p_{i}=p$,  $\theta_{i}=\theta$ and re-index the $h$-orbit of $N_{i}$ as $N_{0},N_{1},\dots,N_{p-1}$, where the index is taken mod $p$. Then $h^{p}$ and $\theta$ are  isotopic on each boundary circle $\rho_{\sigma}$, hence $h^{-p}\circ\theta$ is isotopic to the identity on $\rho_{\sigma}$, as this curve varies over the components of $\bd N_{j}$, $0\le j<p$.  Using this isotopy, extend $h^{-p}\circ\theta$ over all of the nucleus $N_{j}$ to a diffeomorphism $\phi$, isotopic to the identity and equal to  the identity in a neighborhood of $\bd N_{j}$.  Then $h^{p}\circ\phi$  restricts to $\theta$ on the core $N'_{j}$ and agrees with $h^{p}$ near the boundary of the nucleus $N_{j}$.  Rename $h^{p}\circ\phi$, denoting it by $\theta$.

Let $h_{j}=h|N_{j}:N_{j}\to N_{j+1}$, $0\le j\le p$. Set
$$
h'_{0}=h_{1}^{-1}\circ h_{2}^{-1}\circ\cdots\circ h_{p-1}^{-1}\circ\theta,
$$
a diffeomorphism isotopic to $h_{0}$.  Replacing $h_{0}$ with $h'_{0}$ changes $h$ by an isotopy supported in the given nucleus.  We do not modify $h_{j}$, $1\le j\le p-1$.  One easily checks that, with this new choice of $h$, $h^{p}|N_{j}$ is in Nielsen-Thurston form, $0\le j\le p-1$.
\end{proof}

\begin{defn}\label{tight}
The Handel-Miller monodromy, modified on $N$ as above, is said to be \emph{tight}.
\end{defn}

From now on, we assume that $h$ is tight. There is a finite, $h$-invariant family of annuli in the Nielsen-Thurston decomposition, those separating the pseudo-Anosov and/or periodic pieces and the collars that were trimmed off of the $N_{i}$'s to produce the smooth cores $N'_{i}$. Exactly as in the proof of Lemma~\ref{AM}, we obtain the following.

\begin{lemma}
The $\LL_{h}$-saturation of the finite, $h$-invariant family of annuli contributes at most finitely many generators to the cone $\C'_{h}$.
\end{lemma}

\begin{lemma}
Let $S\subset N'_{i}$ be such that $h^{k_{i}p_{i}}|S$ is periodic.  Then the $\LL_{h}$-saturation of $S$ contributes only one generator to the cone $\C'_{h}$.
\end{lemma}

\begin{proof}
A  theorem of Epstein, concerning foliations of compact $3$-manifolds by circles,  asserts that the foliation is a Seifert fibration~\cite{Ep:3}. The saturation of $S$ is a compact 3-manifold with boundary that is foliated by circles.    It follows that every fiber is homologically a rational multiple of every other fiber, completing the proof. 
\end{proof}

Thus, to complete the proof of Theorem~\ref{finitegen}, we must show that the saturation of the pseudo-Anosov components and of the meager invariant set contribute finitely many generators.  In both cases, the proof is the same, making use of  Markov dynamics.  This will be carried out in the next section.

\subsection{Markov dynamics and generators of $\C'_{h}$}\label{polyh}
We fix a tight Handel-Miller monodromy diffeomorphism $h:L\to L$.

 If  for some positive integer $q$, $h^{q}:S\to S$ is pseudo-Anosov, where $S$ is a subsurface of a core, it is well known that there is a Markov partition for the dynamical system generated by $h^{q}$~\cite[Section~6]{bca} and~\cite[Expos\'e~9]{FLP}.  This is a family of ``rectangles'' $R_{1},R_{2},\dots,R_{r}$ with disjoint interiors, each with a pair of opposite sides called ``top'' and ``bottom'' and a pair of opposite sides called ``left'' and ``right''.  These have the property that, for $1\le i,j\le r$, $h^{q}(R_{i})$ crosses $R_{j}$, if at all, just once and does so through the top and bottom.  Similarly, $h^{-q}(R_{i})$ crosses $R_{j}$ at most once and does so through the left and right sides.  Similarly, the dynamical system $h:X^{*}_{h}\to X^{*}_{h}$ admits a Markov partition~\cite[Section~9]{cc:hm}. In this latter case, the top and bottom sides of the rectangles are arcs in leaves of $\Lambda_{-}$ and the left and right sides are arcs in $\Lambda_{+}$.  We allow rectangles that degenerate to a single arc or even a point.  In the pseudo-Anosov case, the Markov rectangles are non-degenerate.  In the following discussion $g:X\to X$ represents either $h^{q}:S\to S$ or $h:X^{*}_{h}\to X^{*}_{h}$.  We let $\XX$ denote the $\LL_{h}$-saturation of $X$, a compact lamination.  Our goal in this section is to prove the following.

\begin{theorem}\label{hull}
The asymptotic cycles for $\XX$ contribute finitely many generators to the cone $\C_{h}$.
\end{theorem}

Together with the results of the previous section, this will complete the proof of Theorem~\ref{finitegen}.

There is an $r\x r$  matrix $A=[a_{ij}]$ of 0's and 1's encoding which letter can follow which. That is, $j$ can follow $i$ if and only if $a_{ij}=1$. The  set of all allowable bi-infinite sequences is denoted by $\mathcal{S}_{A}$ and the so-called subshift of finite type $ \sigma_{A}:\mathcal{S}_{A}\to\mathcal{S}_{A}$ shifts each sequence one step to the right.  There is a compact, totally disconnected (usually Cantor) topology on $\mathcal{S}_{A}$ and $g:X\to X$ is semi-conjugate to $\sigma_{A}$ as we now describe. 
An element $\iota=(i_{k})_{k\in\Z}\in\mathcal{S}_{A}$ represents a unique point $x_{ \iota}\in R_{i_{0}}\cap X$ such that $g^{k}(x_{ \iota})\in R_{i_{k}}$, $\forall k\in\Z$.  In terms of the lamination $\XX$, this means that the leaf issuing from $x_{ \iota}$ meets $L$ successively in $R_{i_{0}},R_{i_{1}},\dots,R_{i_{k}},\dots$ in forward time, with a corresponding statement for backward time.  While each $\iota\in\mathcal{S}_{A}$ encodes a unique point of  $X$, some points of $X$  may have finitely many such representatives.  The problem is that distinct Markov rectangles  may  meet along parts of their boundaries.  Thus the map $\iota\mapsto x_{ \iota}$ is finite to one, defining a semi-conjugacy of $ \sigma_{A}$ to $g:X\to X$.  We omit details since the theory of Markov partitions is a well understood part of dynamical system theory.

\begin{rem}
In the case of $h:X^{*}_{h}\to X^{*}_{h}$, the rectangles can be chosen to be disjoint and the semi-conjugacy is an honest conjugacy.
\end{rem}

The periodic elements of $\mathcal{S}_{A}$ are those carried to themselves by some power $ \sigma_{A}^{p}$, $p\ge1$. These correspond to closed leaves in $\XX$. The substring $( i_{0}, i_{1},\dots, i_{p-1})$ of a periodic sequence $ \iota$, $ \sigma_{A}^{p}( \iota)={ \iota}$, where $p\ge1$ is minimal, will be called the period of $ \iota$.  The substring $(i_{0},i_{1},\dots,i_{p-1},i_{0})$ will be called a periodic string.  If no proper substring of a period is a periodic string, we say that the period is \emph{minimal}.
Since there are only finitely many distinct entries occurring    in the sequences $\iota\in\mathcal{S}_{A}$, it is evident that there are
only finitely many minimal periods.  Those closed leaves $\gamma $ of
$\XX$ that correspond to minimal periods in the symbolic system will
be called minimal loops in $\XX$ and denoted by  $ \gamma_{1}, \gamma_{2},\dots, \gamma_{n}$.

Let $\Phi_{t}$ denote the flow on $\wh W$ that stabilizes $\tb\wh W$ pointwise, has flow lines in $W$ coinciding with the leaves of $\LL_{h}$ and is parametrized so as to preserve $\FF|W$ and so that $\Phi_{1}|L=h$.

Let $\iota =(i_{k})_{k=-\infty}^{\infty}\in \mathcal{S} _{A}$ and suppose
that $i_{q}=i_{0}$ for some $q>0$. Let $x\in R_{\iota }=\bigcap_{j=-\infty}^{\infty}R_{i_{j}}$.  Then there
is a corresponding singular cycle $\Gamma _{q}$ formed from the
orbit segment $\gamma _{q}=\{\Varphi _{t}(x)\}_{0\le t\le q}$ and an
arc $\tau \subset R_{i_{0}}$ from $\Varphi _{q}(x)=h^{q}(x) \text{ to }
x$.  Also, since $i_{q}=i_{0}$, there is a periodic element $\iota
'\in
\Sigma _{A} $ with period $i_{0},\dots,i_{q-1}$ and a corresponding
closed leaf $\Gamma _{\iota '}=\Gamma '$ of $\XX$.

\begin{lemma}\label{periodic} 
The singular cycle $\Gamma _{q}$ and closed leaf $\,\Gamma '$, obtained as
above, are homologous in $\wh W$. In particular, the homology class of $\Gamma _{q}$
depends only on the periodic element $\iota '$.
\end{lemma}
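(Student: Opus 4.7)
The plan is to construct an explicit singular 2-chain in $\wh{W}$ whose boundary is $\Gamma'-\Gamma_q$ plus a residual 1-cycle lying entirely inside the contractible rectangle $R_{i_0}$. First, I would choose an arc $\tau_0\subset R_{i_0}$ from $x$ to $x_{\iota'}$ inside the cylinder set
\[
R \;=\; R_{i_0}\cap h^{-1}(R_{i_1})\cap\cdots\cap h^{-(q-1)}(R_{i_{q-1}})\cap h^{-q}(R_{i_0}),
\]
a horizontal subrectangle of $R_{i_0}$ which by the standard properties of the Markov partitions of \cite[Section~9]{cc:hm} is connected and contains both $x$ (by definition of $R_\iota$) and $x_{\iota'}$ (because $h^k(x_{\iota'})\in R_{i_k}$ for every $k$ and $h^q(x_{\iota'})=x_{\iota'}$).

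Next, form the singular 2-chain $D=\Phi_{[0,q]}(\tau_0)\colon[0,1]\times[0,q]\to\wh{W}$, $(s,t)\mapsto\Phi_t(\tau_0(s))$. Since $\Phi_1|L=h$ and $\Phi$ is a flow, $\Phi_q|L=h^q$, so the four sides of $\partial D$, with their induced orientations, are $+\tau_0$ at $t=0$, $+\Gamma'$ at $s=1$, $-h^q(\tau_0)$ at $t=q$, and $-\gamma_q$ at $s=0$. Writing $\Gamma_q=\gamma_q+\tau$ and rearranging gives
\[
\partial D \;=\; \Gamma' - \Gamma_q + \lambda, \qquad \lambda \;=\; \tau + \tau_0 - h^q(\tau_0).
\]
By the choice of $\tau_0$, we have $h^q(\tau_0)\ss h^q(R)\ss R_{i_0}$, so all three arcs comprising the 1-cycle $\lambda$ lie in the contractible rectangle $R_{i_0}$. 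Hence $\lambda=\partial E$ for some 2-chain $E\ss R_{i_0}\ss L\ss\wh{W}$, and $\Gamma_q-\Gamma'=\partial(E-D)$, which is the desired homology.

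The independence clause follows from the same construction: any two choices of $x\in R_\iota$ and of $\tau\ss R_{i_0}$ produce cycles $\Gamma_q$ each homologous to the same closed leaf $\Gamma'$ determined by $\iota'$. The degenerate case, in which $R_{i_0}$ collapses to a single point (a periodic piece of some $N'_i$), is immediate: then $x=x_{\iota'}$, $\tau$ is constant, and $\Gamma_q=\Gamma'$ on the nose. The principal technical step is thus the connectedness and non-emptiness of the cylinder set $R$ as a horizontal subrectangle of $R_{i_0}$ meeting both $x$ and $x_{\iota'}$; this is an inherent property of the Markov partition associated to the Handel-Miller dynamics and is the only place where anything beyond formal chain-level manipulation enters.
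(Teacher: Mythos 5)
Your proof is correct and takes essentially the same approach as the paper: you construct the same flow-box $2$-chain $D(s,t)=\Phi_t(\tau_0(s))$ (the paper's $H$) and bound the residual $1$-cycle $\tau+\tau_0-h^q(\tau_0)$ inside the contractible rectangle $R_{i_0}$, which is exactly what the paper does with its arcs $\tau'$ and $\tau''=h^q(\tau')$.
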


\begin{proof}
The loop $\Gamma '$ is the orbit segment $\{\Varphi _{t}(y)\}_{0\le
t\le q}$, for a periodic point $$y\in R_{i_{0}}\cap
h^{-1}(R_{i_{1}})\cap\dots\cap h^{-q}(R_{i_{q}})=R'.$$ Remark that
$x\in R'$ also.  Let $\tau '$ be an arc in the rectangle $R'$ from $x$ to $y$ and set $\tau ''=h^{q}(\tau ')$, an arc in
$h^{q}(R')$ from $h^{q}(x)$ to $y$.  Since $i_{q}=i_{0}$,
$h^{q}(R')\subset R_{i_{0}}$ and the cycle $\tau +\tau '-\tau ''$ in the
rectangle $R_{i_{0}}$ is homologous to 0.  That is, we can replace the
cycle $\Gamma _{q}=
\gamma _{q}+ \tau $ by the homologous cycle $\gamma _{q}-\tau '+\tau
''$.  Finally, a homology between this cycle and $\Gamma '$ is
given by the map $$H:[0,1]\times [0,q]\ra \wh W,$$  defined by
parametrizing $\tau '$ on $[0,1]$ and setting
$$H(s,t)=\Varphi _{t}(\tau '(s)).$$
\end{proof}

\begin{cor}\label{minloops}
Every closed leaf $\Gamma $ of $\XX$ is homologous in $\wh W$ to a linear
combination of the minimal loops in $\XX$ with non--negative integer
coefficients.
\end{cor}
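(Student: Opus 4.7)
I propose an induction on the length $q$ of the minimal period $(i_{0},\dots,i_{q-1})$ representing $\Gamma$. The base case $q=1$ is vacuous: a single-letter period has no proper periodic substring, so it is already minimal and $\Gamma = \gamma_{i}$ for some $i$.

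For the inductive step, if $(i_{0},\dots,i_{q-1})$ is minimal there is nothing to prove. Otherwise, by the definition of minimality there are indices $0\le a<b\le q$ with $(a,b)\ne(0,q)$ and $i_{a}=i_{b}$, where $i_{q}:=i_{0}$. First I would reduce to the case $a=0$ by a cyclic rotation of the period: replacing $(i_{0},\dots,i_{q-1})$ by $(i_{a},\dots,i_{q-1},i_{0},\dots,i_{a-1})$ simply moves the basepoint along the same closed orbit in $\wh W$, so the corresponding closed leaf is unchanged as a subset of $\wh W$ and its homology class is preserved.

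With $a=0$ and $i_{b}=i_{0}$, $0<b<q$, the geometric splitting goes as follows. Let $x$ be the periodic point realizing $\Gamma$, set $y=h^{b}(x)\in R_{i_{b}}=R_{i_{0}}$, and choose an arc $\tau\subset R_{i_{0}}$ from $y$ to $x$. Form the two singular $1$-cycles
\[
C_{1}=\{\Phi_{t}(x)\}_{0\le t\le b}+\tau,\qquad C_{2}=\{\Phi_{t}(y)\}_{0\le t\le q-b}-\tau.
\]
Then $[C_{1}]+[C_{2}]=[\Gamma]$, since $\tau$ and $-\tau$ cancel in the sum. Both cycles fit the setup of Lemma~\ref{periodic}: $C_{1}$ is the cycle associated with the period $P_{1}=(i_{0},\dots,i_{b-1})$ at $x$, and $C_{2}$ is associated with $P_{2}=(i_{0},i_{b+1},\dots,i_{q-1})$ at $y$. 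The admissibility of $P_{2}$ in $\SS_{A}$ follows because the only new transition involved, $i_{q-1}\to i_{b}=i_{0}$, already occurs as $i_{q-1}\to i_{q}$ in the original periodic string. Lemma~\ref{periodic} now gives $[C_{1}]=[\Gamma_{P_{1}}]$ and $[C_{2}]=[\Gamma_{P_{2}}]$, where $\Gamma_{P_{1}},\Gamma_{P_{2}}\in\XX^{\bullet}_{h}$ are closed leaves of periods $b$ and $q-b$, both strictly less than $q$.

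The inductive hypothesis then writes each of $[\Gamma_{P_{1}}]$ and $[\Gamma_{P_{2}}]$ as a non-negative integer combination of the minimal loops $\gamma_{1},\dots,\gamma_{r}$, and summing yields the same for $[\Gamma]$. I expect the main points requiring care to be the bookkeeping around cyclic rotation and the admissibility of the spliced period $P_{2}$ in $\SS_{A}$; both are routine consequences of the wrap-around identity $i_{q}=i_{0}$ and of the fact that the homology class of a closed orbit is independent of its chosen basepoint.
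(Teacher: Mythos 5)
Your proposal is correct and takes essentially the same approach as the paper: after cyclically rotating so that the inner repetition $i_b=i_0$ starts at the base index, the orbit is split at time $b$ into two shorter singular cycles, each of which Lemma~\ref{periodic} identifies with a closed leaf of strictly shorter period, and the argument closes by induction on period length (the paper phrases this as ``finite iteration of this procedure''). Your explicit admissibility check for $P_2$ and the basepoint-rotation bookkeeping are fine but are already handled implicitly by Lemma~\ref{periodic}, since it applies to any orbit segment in $\SS_A$ that begins and ends in the same Markov rectangle.
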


\begin{proof}
The closed leaf $\Gamma$ corresponds to a period $(i_{0},\dots,i_{q-1})$. If this period is minimal, we are done.  Otherwise, after a cyclic permutation, we can assume that the period is of the form  $(i_{0},i_{1},\dots,i_{p}=i_{0},i_{p+1},\dots,i_{q-1})$.  We then see that $\Gamma$ is homologous to the sum of two loops, one being the arc $\gamma$ of $\Gamma$ corresponding to the periodic string $(i_{0},\dots,i_{p}=i_{0})$ followed by an arc $\tau$ from the endpoint of $\gamma$ to its initial point, and one being $-\tau+\gamma'$, where $\gamma'$ is the subarc of $\Gamma$ corresponding to the periodic string $(i_{0},i_{p+1},\dots,i_{q-1},i_{q}=i_{0})$.  By Lemma~\ref{periodic}, both $\gamma+\tau$ and $-\tau+\gamma'$ are homologous to closed orbits corresponding to periods strictly shorter than $(i_{0},\dots,i_{q-1})$.  Thus, finite iteration of this procedure proves the corollary.
\end{proof}

Let $\iota=(i_{k})_{k=-\infty}^{\infty},\iota'=(i'_{k})_{k=-\infty}^{\infty}\in\mathcal{S}_{A}$ and suppose that 
$$
(i_{0},i_{1},\dots,i_{q})=(i'_{0},i'_{1},\dots,i'_{q}),
$$  not necessarily a period.  Let $x=x_{\iota}$ and $x'=x_{\iota'}$.  Both of these points are in 
$$
R' = R_{i_{0}}\cap
h^{-1}(R_{i_{1}})\cap\dots\cap h^{-q}(R_{i_{q}}).
$$  Choose a path $\tau$ in $R_{i_{0}}$ from $x'$ to $x$ and a path  $\tau'\subset R_{i_{q}}$ from $h^{q}(x')$ to $h^{q}(x)$.  Consider the orbit segments $\Gamma=\{\Phi_{t}(x)\}_{t=0}^{q}$ and $\Gamma'=\{\Phi_{t}(x')\}_{t=0}^{q}$. 
Let $K$ be an upper bound of the diameters of $R_{i}$, $1\le i\le n$.  Then the paths $\tau$ and $\tau'$  can always be chosen to have length less than $K$.

 The following is proven analogously to Lemma~\ref{periodic}.

\begin{lemma}\label{homologouschains}
The singular chains $\Gamma'$ and $\tau+\Gamma-\tau'$ are homologous.  In particular, for each closed $1$-form $\eta$ on $\wh W$, 
$$
\int_{\Gamma'}\eta=\int_{\tau+\Gamma-\tau'}\eta.
$$  Here, the paths $\tau$ and $\tau'$ have length less than $K$.
\end{lemma}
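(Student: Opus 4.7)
The plan is to follow the template of Lemma~\ref{periodic}, after a preliminary reduction. Since $R_{i_{0}}$ and $R_{i_{q}}$ are rectangles and hence simply connected, any two paths in either one with the same endpoints are homotopic rel endpoints, and therefore homologous. Consequently, I may replace the arbitrary $\tau \subset R_{i_{0}}$ with a path $\tau_{0}$ running from $x'$ to $x$ inside the smaller rectangle $R' \subset R_{i_{0}}$ (possible because $R'$ is a rectangle containing both $x$ and $x'$), and I may replace the arbitrary $\tau' \subset R_{i_{q}}$ with $\Phi_{q}(\tau_{0}) = h^{q}(\tau_{0}) \subset h^{q}(R') \subset R_{i_{q}}$, whose endpoints $h^{q}(x')$ and $h^{q}(x)$ coincide with those of $\tau'$.

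Next I would exhibit an explicit singular 2-chain realizing the homology. Parametrize $\tau_{0}\colon [0,1] \to R'$ with $\tau_{0}(0) = x'$ and $\tau_{0}(1) = x$, and set
\[
H\colon [0,1]\times[0,q] \to \wh{W}, \qquad H(s,t) = \Phi_{t}(\tau_{0}(s)),
\]
which is continuous because $\Phi$ is a flow and $\tau_{0}$ is a path. With the standard orientation on the rectangle $[0,1]\times[0,q]$, the four edges $s=0$, $s=1$, $t=0$, $t=q$ trace out $\Gamma'$, $\Gamma$, $\tau_{0}$, $\Phi_{q}(\tau_{0})$ respectively, giving
\[
\partial H \;=\; \tau_{0} + \Gamma - \Phi_{q}(\tau_{0}) - \Gamma'.
\]
Thus $\Gamma' \sim \tau_{0} + \Gamma - \Phi_{q}(\tau_{0})$ in $H_{1}(\wh{W})$, and combining with the rectangle reductions of the first paragraph gives $\Gamma' \sim \tau + \Gamma - \tau'$, as claimed. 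The integral identity is then immediate from Stokes's theorem applied to the closed $1$-form $\eta$, since $\Gamma' - (\tau + \Gamma - \tau')$ is a $1$-boundary.

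The length bound on $\tau$ and $\tau'$ is a routine bookkeeping remark: each lies in a Markov rectangle of diameter at most $K$, and one may select chords of length less than $K$ by the standard properties of Markov partitions on a surface. I expect no serious obstacle at any step; the one place demanding a bit of care is the boundary computation of $H$ and keeping the orientations consistent so that the signs on $\tau$ and $\tau'$ come out exactly as in the statement.
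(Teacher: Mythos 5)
Your proof is correct and follows essentially the same approach that the paper intends, since the paper merely asserts that this lemma "is proven analogously to Lemma~\ref{periodic}," and your construction of the singular $2$-chain $H(s,t)=\Phi_{t}(\tau_{0}(s))$ is precisely the mechanism used in the proof of that earlier lemma. The preliminary reduction replacing $\tau$ and $\tau'$ by $\tau_{0}$ and $h^{q}(\tau_{0})$ via simple connectivity of the Markov rectangles is a sensible adaptation (needed here because $\tau'$ lives in $R_{i_{q}}$ rather than $R_{i_{0}}$), and plays the same role as the observation in Lemma~\ref{periodic} that $\tau+\tau'-\tau''$ bounds in $R_{i_{0}}$; one small nit is that $\Gamma'$ and $\tau+\Gamma-\tau'$ are singular chains with nonzero boundary, so the phrase ``in $H_{1}(\wh{W})$'' should really be ``as singular $1$-chains, differing by $\partial H$,'' which is what you in fact establish.
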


\begin{prop}\label{combsofminloops}
Every homology direction can be arbitrarily well approximated by nonnegative linear combinations of the minimal loops.  
\end{prop}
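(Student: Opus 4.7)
The plan is to produce, for each homology direction $\bar\Gamma$, an approximating sequence of closed-leaf classes in $\XX^{\bullet}_{h}$, each of which is a nonnegative integer combination of the minimal loops by Corollary~\ref{minloops}. These closed leaves will be extracted from long orbit segments via Lemma~\ref{periodic}, applied at pigeonhole-guaranteed recurrences of a fixed Markov label.

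Since $\XX^{\bullet}_{h}$ carries every asymptotic cycle needed to span $\C'_\FF$, I may take $\bar\Gamma = \lim_k (1/\tau_k)[\Gamma'_{\tau_k}]$ to arise from an orbit $\gamma(t) = \Phi_t(x)$ of some $x\in X^{\bullet}_{h}$ with symbolic code $\iota = (i_k)_{k\in\Z} \in \SS_{A}$. If $\iota$ is eventually periodic---in particular, if $x$ lies in a periodic Nielsen--Thurston piece of $N'$---then $\bar\Gamma$ is itself the class of a closed leaf and Corollary~\ref{minloops} finishes the argument. Otherwise, by pigeonhole on the $n$-letter alphabet, some letter $a$ recurs in $\iota$ at an infinite sequence of positive positions $k_0 < k_1 < \cdots$. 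After passing to a subsequence of the defining sequence so that $\tau_k = k_{m(k)}$ for some $m(k)\to\infty$, the substring $(i_{k_0}, i_{k_0+1},\ldots, i_{\tau_k})$ begins and ends with $a$, and Lemma~\ref{periodic} gives that the cycle $\Gamma^{(k)} := \gamma|_{[k_0,\tau_k]} + \tau^{(k)}$, with $\tau^{(k)} \ss R_a$ an arc of length at most $K$ from $\Phi_{\tau_k}(x)$ to $\Phi_{k_0}(x)$, is homologous in $\wh W$ to the closed leaf of $\XX^{\bullet}_{h}$ coded by this periodic string. By Corollary~\ref{minloops}, $[\Gamma^{(k)}]$ is a nonnegative integer combination of $[\gamma_1],\ldots,[\gamma_r]$.

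The difference $[\Gamma'_{\tau_k}] - [\Gamma^{(k)}]$ is then represented by a cycle built from the fixed initial orbit segment $\gamma|_{[0,k_0]}$, the closing arc $\tau_{\tau_k}$ in $L$ controlled by Lemma~\ref{homologouschains}, and the arc $-\tau^{(k)}$ in $R_a$---all of uniformly bounded length and supported in a fixed compact subset of $\wh W$---so $(1/\tau_k)([\Gamma'_{\tau_k}] - [\Gamma^{(k)}]) \to 0$ in $H_1(\wh W)$, while $(1/\tau_k)[\Gamma^{(k)}]$ remains inside the nonnegative rational cone generated by $[\gamma_1],\ldots,[\gamma_r]$. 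This will exhibit $\bar\Gamma$ as the desired limit. The main obstacle will be to justify that the subsequential replacement $\tau_k \mapsto k_{m(k)}$ preserves the limiting homology direction; I intend to handle this either by a counting argument forcing $\tau_k - k_{m(k)} = o(\tau_k)$ for a pigeonhole-chosen letter $a$ of positive recurrence density, or, failing that, by invoking the specification property of the Handel-Miller subshift on each of its transitive (mixing pseudo-Anosov and meager) components, so that a bounded connecting word can be appended and absorbed via Lemma~\ref{homologouschains}.
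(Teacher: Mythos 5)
Your overall strategy matches the paper's: close a long orbit segment up to a nearby periodic orbit, apply Corollary~\ref{minloops} to the periodic orbit, and show that the normalized difference goes to zero via Lemma~\ref{homologouschains}. Where you diverge is in \emph{how} you produce the periodic orbit, and your primary route has a real gap that you yourself flag. Truncating at the last recurrence position $k_{m(k)}\le\tau_{k}$ of a pigeonhole-chosen letter $a$ does not in general give $\tau_{k}-k_{m(k)}=o(\tau_{k})$: by pigeonhole you can only guarantee $a$ has positive \emph{upper} density among $(i_{0},\ldots,i_{\tau_{k}})$, and positive upper density is compatible with the occurrences of $a$ being concentrated in $[0,\tau_{k}/2]$ for infinitely many $k$, making the discarded tail comparable to $\tau_{k}$ and thus capable of shifting the limiting homology direction. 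The ``counting argument'' you gesture at would require positive lower density with a rate, or bounded gaps, neither of which pigeonhole alone supplies.

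Your second, fallback route is the correct one and is essentially what the paper does, though the paper implements it directly from the transition matrix rather than appealing to an abstract specification property. The paper first shifts the code $\iota$ so that $i_{0}$ recurs infinitely often in forward time (pigeonhole again, but used to pick the \emph{target} letter, not the truncation points). Then, because every letter $i$ appearing at a nonnegative position of $\iota$ is eventually followed by an $i_{0}$, there is a word of some length $k_{i}$ in the subshift joining $i$ to $i_{0}$; taking $k=\max_{i}k_{i}$ over the finite alphabet gives a \emph{uniform} bound. Now, instead of truncating backward to a recurrence of $a$, the paper \emph{extends forward}: it appends to the initial segment $(i_{0},\ldots,i_{q})$ a connecting word of length $s\le k$ to obtain a periodic string of length $q+s$ and hence a closed leaf $\Gamma'_{q}$. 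The extension has uniformly bounded length, so Lemma~\ref{homologouschains} (applied to the shared initial $q$-block, together with the $O(1)$ tail) gives $\tfrac{1}{q}([\Gamma'_{\tau_{k}}]-[\Gamma'_{q}])\to0$ along the defining sequence $q=\tau_{k}$ without any subsequential replacement of the times, and Corollary~\ref{minloops} finishes as in your sketch. This forward-extension trick is precisely the bounded-length device that would repair your first route; if you commit to it, your argument is complete.
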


\begin{proof}
 Let $\Gamma=\{\Phi(t)(x)\}_{t=-\infty}^{\infty}$ be an orbit and suppose that $x$ corresponds to the symbol $\iota=\{i_{r}\}_{r=-\infty}^{\infty}$.  By a suitable shift, we can assume that $i_{0}$ occurs infinitely often in forward time in this symbol.  Consequently, for each index $i$ in $\iota$, there is a positive integer $k_{i}$ such the the $(i,i_{0})$-entry in $A^{k_{i}}$ is strictly positive.  Let $k$ be the largest of the $k_{i}$.  Thus, given a substring $(i_{0},i_{1},\dots,i_{q})$ of $\iota$, there is a periodic element $\iota'\in\Sigma_{A}$ with period $(i_{0},i_{1},\dots,i_{q},i_{q+1},\dots,i_{q+s})$, where $s\le k$. Let $\Gamma'_{q}$ denote the corresponding periodic orbit.  If we parametrize the flow $\Phi_{t}$ by the invariant measure for $\FF$ of period 1,  then the length of the segment $\Gamma_{q}$ of $\Gamma$ corresponding to the string $(i_{0},i_{1},\dots,i_{q})$ is $q$.  Choosing a suitable sequence $q\uparrow\infty$, we obtain the general homology direction $$\mu=\lim_{q\to\infty}\frac{1}{q}\int_{\Gamma_{q}}.$$  Passing to a subsequence, we also obtain a cycle $$
\mu'=\lim_{q\to\infty}\frac{1}{q}\int_{\Gamma'_{q}}.
$$  Since $s$ is bounded independently of $q$, Lemma~\ref{homologouschains} implies that $\mu$ and $\mu'$ agree on all closed 1-forms, and so $\mu'$ is a cycle homologous to $\mu$ (both in $(\DD'_{*},\bd)$ and $(\EE'_{*},\bd)$).  Corollary~\ref{minloops} then implies the assertion.
\end{proof}

By Lemma~\ref{span}, Theorem~\ref{hull} follows, hence the proof of Theorem~\ref{finitegen} is complete.

\subsection{Uniqueness of the Handel-Miller  cone}

Each periodic piece in the Nielsen-Thurston decomposition of the nuclei contributes just a ray of homology classes  to $\C'_{h}$.  Each pseudo-Anosov piece contributes a closed, convex subcone.  Although the choice of the restriction of $h$ to a pseudo-Anosov pieces  is not unique, any two choices $h$ and $h'$ are related by $(h')^{p}=\phi\circ h^{p}\circ\phi^{-1}$, where $p$ is the power of $h$ leaving the pseudo-Anosov piece invariant and $\phi:L\to L$ is a homeomorphism isotopic to the identity and supported on the pseudo-Anosov piece (cf.~\cite{th:surfaces}). This also holds, by the same reference, for the periodic pieces. By an argument entirely analogous to the proof of Corollary~\ref{conj}, we obtain the following.
  
  \begin{prop}\label{Nindep}
  The Handel-Miller  cone $\C'_{h}=\C'_{h^{p}}$ is independent of the allowable choices of $h$ on the pseudo-Anosov and periodic pieces in the nuclei.
  \end{prop}
  
  \begin{rem}
   In particular, not only are the cones independent of the allowable choices of $h$ but also the homology directions associated to the map $h^{p}$ span the same cone as those of $h$. 
  \end{rem}

\begin{theorem}\label{hmindep}
The Handel-Miller  cone $\C'_{h}\subset H_{1}(\wh{W})$ is independent of the choice of the tight Handel-Miller representative of the isotopy class of the endperiodic monodromy of $L$.
\end{theorem}

\begin{proof}
Let $h$ and $g$ be two such choices.  
By~\cite[Theorem~10.10]{cc:hm}, the laminations $\Lambda_{\pm}$ associated to $h$ and the laminations $\Pi_{\pm}$ associated to $g$ are simultaneously ambiently isotopic.  That is $\Pi_{\pm}=\phi(\Lambda_{\pm})$ where $\phi:L\to L$ is a homeomorphism isotopic to the identity.  Let $X^{*}_{h}$ and $X^{*}_{g}$ denote the meager invariant sets for the respective laminations.

While the choices of $h$ and $g$ associated to these laminations is not unique, the restrictions $h|X^{*}_{h}$ and $g|X^{*}_{g}$ are unique.  Combined with Proposition~\ref{Nindep}, this allows us to assume that 
$\phi^{-1}\circ g\circ\phi=h$ on the full invariant set.  By Corollary~\ref{conj},  $\C'_{h}=\C'_{g}$.
\end{proof}

\begin{rem}
Because of this theorem, we will  denote the cone $\C'_{h}$ by $\C'_{\FF}$ and the dual $\hk(\wh{W})$-cohomology cone by  $\Ck_{\FF}$ and  the dual $\hI(\wh{W})$-cohomology cone by  $\C_{\FF}$. Note that $h$ is tight (and smooth) Handel-Miller monodromy.
\end{rem}

\begin{defn}

The cohomology cones   $\C_{\FF}$ and $\Ck_{\FF}$ are called \emph{Handel-Miller foliation cones}.

\end{defn}

\begin{rem}

The cohomology cones   $\C_{\FF}$ and $\Ck_{\FF}$ are the cones in Theroems~\ref{cone} and~\ref{conesmooth}.

\end{rem}

\subsection{The proof of Theroem~\ref{conesmooth}}

We showed in Theorem~\ref{hmindep} that the Handel-Miller  cones $\C'_{\FF}\subset H_{1}(\wh{W})$ are well defined. It follows that the Handel-Miller foliation cones $\Ck_{\FF}\subset\hk(\wh{W})$ are well defined. By Corollary~\ref{conepoly}, the Handel-Miller foliation cones are closed, convex, and polyhedral with  finitely many faces of codimension one.

Suppose $\wh\HH$ is an admissible foliation of $\wh W$ and $\wh\LL$ is a stongly transverse one-dimensional foliation. Then by Theorem~\ref{folforms} the open cone $\intr\C_{\XX}\subset\hI(\wh W)$ consists of foliated forms transverse to $\LL$.  Thus, there exists a foliation of relative depth one in $\C_{\XX}$ with monodromy $f$, $\LL = \LL_{f}$ and $\Ck_{\XX} = \Ck_{f}$ which, by Theorem~\ref{max}, is contained in some Handel-Miller foliation cone $\Ck_{\FF}$. Thus, $\wh\HH\in\Ck_{\FF}$. Thus, the $\CO$ isotopy classes of admissible foliations $\wh\HH$ of $\wh W$ that are  trivial in the arms are in natural  correspondence with the rays out of the origin in the interiors of the Handel-Miller foliation cones. We will prove that this correspondance is one-to-one in~\cite{cc:isorel}.

To prove Theroem~\ref{conesmooth}, it remains to show that the foliation cones have disjoint interiors, which we do in Theorem~\ref{max}, and that the Handel-Miller foliation cones are finite in number, which we do in Section~\ref{conefinite}.

\subsubsection{Maximality of the Handel-Miller  foliation cones}   Our next goal is to show that, if $g$ is an endperiodic map  in the  isotopy class of $h$, then $\C_{g}^{\kappa}\subseteq\Ck_{\FF}$. In fact, we will show that if $g$ is a monodromy map for \emph{any} smooth fibration $\GG$ of $W$, then either $(\intr\Ck_{g})\cap\Ck_{\FF}=\0$, or $\Ck_{g}\subseteq\Ck_{\FF}$.

We begin with an analysis of the group $G=H^{1}_{\kappa}(\wh{W})\cap H^{1}(\wh{W};\Z)$.  This will play the role of the integer lattice in $\hk(\wh{W})$.  The ``rational rays'' in this vector space will be those rays issuing from the origin that meet $G$ in nonzero points.  It will be crucial that the union of the rational rays be dense in $\hk(\wh{W})$.

Recall from Subsection~\ref{tops} that the topology on $\hk(\wh{W})=\ura{\lim}\hk(K_{i}^{\circ})$  can be taken to be the weak topology.  

By $\bd K_{i}$ we will mean the relative boundary of $K_{i}$ in $\wh W$. It consists of finitely many disjoint rectangles and/or annuli. Each annular component interfaces with an arm of the octopus decomposition and is $\LL_{h}$-saturated. Each remaining arm  interfaces with $K_{i}$ along a union of some of the rectangular components. No component can be a M\"obius strip since it will be fibered by \emph{oriented} intervals that are arcs of $\LL_{h}$.  Recall that $K_{i}^{\circ}=K_{i}\sm\bd K_{i}$.

\begin{lemma}\label{vanonbd'}
The subspace $\hk(K_{i}^{\circ})\subset\hI(K_{i})$ consists of those classes which restrict to $0$ in $\hI(\bd K_{i})$.
\end{lemma}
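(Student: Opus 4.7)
The plan is to recognize the claim as the exactness of the cohomology long exact sequence of the pair $(K_{i},\bd K_{i})$, after identifying relative cohomology with compactly supported cohomology of the relative interior.

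First I would use the collar structure of the relative boundary $\bd K_{i}$ (which, as noted in the paper, consists of finitely many bicollared rectangles and annuli in $K_{i}$) to observe that the inclusion $K_{i}^{\o}\hra K_{i}$ is a homotopy equivalence, so that $\hI(K_{i}^{\o})=\hI(K_{i})$ canonically.  Under this identification, $\hk(K_{i}^{\o})$ is, by the definition given just before Theorem~\ref{cone}, precisely the image of the natural map $H^{1}_{c}(K_{i}^{\o})\to\hI(K_{i}^{\o})$.  Next, a collar-excision argument produces the standard isomorphism $\hI(K_{i},\bd K_{i})\cong H^{1}_{c}(K_{i}^{\o})$: a relative cocycle on $K_{i}$ can be normalized to vanish on a collar of $\bd K_{i}$ and then restricted to a compactly supported cocycle on $K_{i}^{\o}$, while every compactly supported cocycle on $K_{i}^{\o}$ extends by zero to a cocycle on $K_{i}$ vanishing near $\bd K_{i}$.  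This isomorphism visibly commutes with the forgetful maps to $\hI(K_{i})=\hI(K_{i}^{\o})$ on both sides.

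Plugging this into the long exact sequence of the pair,
\[
\cdots\to\hI(K_{i},\bd K_{i})\xra{\;j^{*}\;}\hI(K_{i})\xra{\;r^{*}\;}\hI(\bd K_{i})\to\cdots,
\]
exactness at $\hI(K_{i})$ yields $\ker r^{*}=\im j^{*}=\hk(K_{i}^{\o})$, which is exactly the claim.  No serious obstacle is anticipated: the only mildly delicate step is the compatibility of the excision isomorphism $\hI(K_{i},\bd K_{i})\cong H^{1}_{c}(K_{i}^{\o})$ with the forgetful maps into $\hI(K_{i})=\hI(K_{i}^{\o})$, and this is transparent on the cochain level from the description above.
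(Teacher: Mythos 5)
Your proof is correct.  It reaches the conclusion by packaging the claim as exactness of the long exact sequence of the pair $(K_{i},\bd K_{i})$ together with the collar/excision isomorphism $\hI(K_{i},\bd K_{i})\cong H^{1}_{c}(K_{i}^{\o})$, whereas the paper argues directly with differential forms: given $\omega$ whose restriction to $\bd K_{i}$ is exact, a primitive is chosen on a collar, damped off to a global function $\lambda$, and $\omega-d\lambda$ is then compactly supported in $K_{i}^{\o}$, with the converse being immediate.  The two arguments are really the same computation in different clothing, since unwinding the cochain-level excision isomorphism you mention (normalize a relative cocycle to vanish on a collar) reproduces exactly the paper's damping construction; what your version buys is conceptual clarity and an automatic appeal to standard exactness, while the paper's version is shorter, self-contained, and avoids invoking the identification $\hI(K_{i},\bd K_{i})\cong H^{1}_{c}(K_{i}^{\o})$, which requires a small amount of care precisely because $\bd K_{i}$ is only the frontier of $K_{i}$ in $\wh W$ rather than the full topological boundary of the compact sutured manifold $K_{i}$.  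Your remark that the only delicate point is the compatibility of the excision isomorphism with the forgetful maps to $\hI(K_{i})=\hI(K_{i}^{\o})$ is well taken, and your cochain-level description does handle it.
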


\begin{proof}
If $[\omega]\in\hI(K_{i})$ restricts to $0$ on $\bd K_{i}$, then $\omega|\bd K_{i}$ is exact, hence is also exact in a normal neighborhood $N$ of $\bd K_{i}$. That is, $\omega|N=d\lambda$ for a smooth function $\lambda$ which can be damped off to $0$ and extended by $0$ to a smooth function $\lambda$ on all of $K_{i}$.  Then $\omega-d\lambda$ has compact support in $K_{i}^{\circ}$.  The converse is immediate.
\end{proof}

Set $G_{i}=\hk(K_{i}^{\circ})\cap\hI(K_{i};\Z)$, a finitely generated, free abelian group. 

\begin{lemma}\label{fullattice}
The subgroup $G_{i}\subset\hk(K_{i}^{\circ})$ is a full lattice subgroup of \upn{(}i.e., spans\upn{)}  this finite dimensional vector space.
\end{lemma}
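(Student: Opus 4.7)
The plan is to combine Lemma~\ref{vanonbd'} with an elementary fact about lattices in finite-dimensional real vector spaces. First I would use Lemma~\ref{vanonbd'} to identify $\hk(K_i^{\o})$ with the kernel of the restriction homomorphism $r\colon\hI(K_i)\ra\hI(\bd K_i)$, a linear map between finite-dimensional real vector spaces (both $K_i$ and $\bd K_i$ being compact). This map is induced on cochains by inclusion and hence commutes with change of coefficients: it is the real extension of the integer restriction $r_{\Z}\colon\hI(K_i;\Z)\ra\hI(\bd K_i;\Z)$. Moreover, $\hI(K_i;\Z)$ and $\hI(\bd K_i;\Z)$ are free abelian (the universal coefficient theorem contributes no $\mathrm{Ext}$ term because $H_{0}$ of each space is free), so they embed as full lattices in their respective real cohomologies.

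The core step would then be the following elementary fact, which I would state and verify in one sentence: if $f\colon V\ra V'$ is a linear map of finite-dimensional real vector spaces, $L\ss V$ is a full lattice, and $f(L)\sseq L'$ for some lattice $L'\ss V'$, then $L\cap\ker f$ is a full lattice in $\ker f$. Indeed, $f(L)$ is a finitely generated subgroup of $L'$, hence free abelian of some rank $s$; since $L$ spans $V$, $f(L)$ spans $\im f$, forcing $\dim\im f=s$; the first isomorphism theorem then gives
\[
\rank(L\cap\ker f)=\rank L-s=\dim V-s=\dim\ker f,
\]
which is exactly the condition for $L\cap\ker f$ to be a full lattice in $\ker f$.

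Applying this fact with $f=r$, $L=\hI(K_i;\Z)$, and $L'=\hI(\bd K_i;\Z)$ yields $G_i=\hI(K_i;\Z)\cap\ker r$ as a full lattice in $\hk(K_i^{\o})=\ker r$, which is precisely the conclusion of the lemma. There is no serious obstacle here: the real content is already carried by Lemma~\ref{vanonbd'}, and once that identification is in hand, the result is a routine consequence of the fact that the restriction map is defined over $\Z$.
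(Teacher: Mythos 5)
Your argument is correct, but it is packaged differently from the paper's. The paper works on the homology side: it passes to $\Gamma_i = H_1(K_i;\Z)/\tors$, takes the smallest direct summand $\Gamma_i'$ containing the peripheral classes (core loops of the annular components of $\bd K_i$), extends a basis of $\Gamma_i'$ to one of $\Gamma_i$, and then observes via Lemma~\ref{vanonbd'} that the complementary members of the Kronecker-dual basis of $\hI(K_i)$ lie in $G_i$ and span $\hk(K_i^{\o})$. You instead work entirely on the cohomology side, identifying $\hk(K_i^{\o})$ with $\ker\bigl(r\colon\hI(K_i)\to\hI(\bd K_i)\bigr)$ by Lemma~\ref{vanonbd'}, noting that $r$ is the real extension of the integral restriction map, and invoking a clean general lemma: if a linear map between finite-dimensional real vector spaces carries a full lattice into a lattice, then the intersection of the full lattice with the kernel is again full. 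Your general lemma is correctly proved (the splitting of $0\to L\cap\ker f\to L\to f(L)\to 0$ via freeness of $f(L)$, the rank count, and discreteness of $L\cap\ker f$ in $\ker f$ all check out, and the freeness of $\hI(K_i;\Z)$ and $\hI(\bd K_i;\Z)$ follows as you say from the UCT). The net effect is the same; what you gain is a reusable abstract statement and no need to choose bases or direct-summand complements, at the cost of being slightly less explicit than the paper about where the lattice generators come from.
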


\begin{proof}
The core loops of the annular components of $\bd K_{i}$  represent elements (not necessarily linearly independent) of the finitely generated, free abelian group $\Gamma_{i}=H_{1}(K_{i};\Z)/\tors$. We call them ``peripheral'' elements. Let $\Gamma'_{i}\subseteq\Gamma_{i}$ be the smallest direct summand in this lattice that contains the peripheral elements.  In particular, by Lemma~\ref{vanonbd'}, the classes in $\hI(K_{i})$ which vanish on $\Gamma'_{i}$ are exactly the elements of $\hk(K_{i}^{\circ})$.   If $[\sigma_{1}],[\sigma_{2}],\dots,[\sigma_{r}]$ are a basis of $\Gamma'_{i}$, then extend to an ordered basis $([\sigma_{1}],\dots,[\sigma_{r}],[\sigma_{r+1}],\dots,[\sigma_{q}])$  of $\Gamma_{i}$, hence of $H_{1}(K_{i})$, Let $([\omega_{1}],\dots,[\omega_{r})],[\omega_{r+1}],\dots,[\omega_{q}])$ be the dual basis of $\hI(K_{i})$. Then, by Lemma~\ref{vanonbd'}, the classes $[\omega_{j}]$ lie in $G_{i}$, $r+1\le j\le q$, and form a basis of $\hk(K_{i}^{\circ})$.
\end{proof}

It is standard that the rays in the finite dimensional vector space $\hk(K_{i}^{\circ})$ which meet the full lattice $G_{i}$ at nonzero points  (the rational rays) unite to form a dense subset of that vector space.  Furthermore, note that $$G_{0}\subseteq G_{1}\subseteq\cdots\subseteq G_{i}\subseteq\cdots$$ has increasing union $G\subset\hk(\wh{W})$.  An appeal to the definition of the weak topology makes the following clear.

\begin{cor}\label{ratdense}
The union of the rational rays is dense in $\hk(\wh{W})$.
\end{cor}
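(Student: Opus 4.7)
The plan is to reduce the density question to each finite-dimensional stage $\hk(K_{i}^{\o})$, where Lemma~\ref{fullattice} already does the work, and then transport density through the direct limit.

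First I would unpack the topology. By the discussion in Subsection~\ref{tops}, $\hk(\wh{W})=\ura{\lim}\,\hk(K_{i}^{\o})$ carries the weak (direct limit) topology, so a subset $U\ss\hk(\wh{W})$ is open if and only if $U\cap\hk(K_{i}^{\o})$ is open in $\hk(K_{i}^{\o})$ for every $i\ge0$. Moreover, as a set, $\hk(\wh{W})=\bigcup_{i}\hk(K_{i}^{\o})$. Inclusion $K_{i}^{\o}\hra K_{i+1}^{\o}$ pulls back integral classes to integral classes and compactly supported classes to compactly supported classes, so $G_{i}\sseq G_{i+1}$ and the increasing union $\bigcup_{i}G_{i}$ is precisely $G$.

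Next I would establish the following general density principle for ascending unions with the weak topology: if $V=\bigcup_{i}V_{i}$ with the direct limit topology and $S_{i}\ss V_{i}$ is dense in $V_{i}$ for each $i$, then $\bigcup_{i}S_{i}$ is dense in $V$. Given $v\in V$ and an open neighborhood $U$ of $v$, choose $i$ with $v\in V_{i}$; then $U\cap V_{i}$ is open in $V_{i}$ and contains $v$, so by density it meets $S_{i}\ss\bigcup_{j}S_{j}$. I would apply this with $V_{i}=\hk(K_{i}^{\o})$ and $S_{i}=\{tg\mid g\in G_{i}\sm\{0\},\,t\ge0\}$, the union of the rational rays in $\hk(K_{i}^{\o})$. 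Note that the $S_{i}$ are nested since $G_{i}\sseq G_{i+1}$, and their union in $\hk(\wh W)$ is exactly the set of rational rays of $\hk(\wh W)$.

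Finally I would verify that each $S_{i}$ is dense in $\hk(K_{i}^{\o})$. By Lemma~\ref{fullattice}, $G_{i}$ is a full lattice in the finite-dimensional real vector space $\hk(K_{i}^{\o})$. A standard approximation argument (the lattice $\frac{1}{N}G_{i}$ has fundamental domain of diameter $O(1/N)$ for each norm on $\hk(K_{i}^{\o})$) shows that for every nonzero $x\in \hk(K_{i}^{\o})$ and every $\epsilon>0$ one can find $g\in G_{i}\sm\{0\}$ and $N>0$ with $\|x-g/N\|<\epsilon$, placing the rational ray $\R^{+}g$ within $\epsilon$ of $x$; the origin is itself the vertex of every such ray. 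Combining this with the two preceding paragraphs yields the corollary. I do not foresee a real obstacle: the only slightly delicate point is the density principle for the direct limit topology, but this is immediate from the characterization of open sets above.
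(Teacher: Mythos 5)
Your proof is correct and follows essentially the same approach the paper uses: density of rational rays in each finite-dimensional $\hk(K_i^\circ)$ via Lemma~\ref{fullattice}, combined with the nesting $G_i \sseq G_{i+1}$ and the definition of the direct limit (weak) topology. You simply make explicit the density-passage-to-limit step that the paper leaves as "an appeal to the definition of the weak topology."
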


 \begin{prop}\label{samecone}
The rational foliated ray $\left<\GG\right>$ lies in $\intr\Ck_{\FF}$  if and only if $\Ck_{\GG}=\Ck_{\FF}$.
\end{prop}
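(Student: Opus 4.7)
The plan is to derive both directions from the Handel-Miller maximality statement of this subsection, together with an auxiliary fact that I will prove first: $\left<\GG\right>$ always lies in the interior of its own cone $\Ck_{\GG}$, independently of any assumption relating $\GG$ to $\FF$. This auxiliary fact drives both implications.

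To prove the auxiliary fact, note that the proper foliation $\GG$ of $\wh{W}$ is defined by a foliated form $\eta$ in the class $\left<\GG\right>$, transverse to a smooth choice of $\LL_{g}$ whose induced monodromy $g$ on a leaf of $\GG$ is a smooth monodromy of $\GG$. Applying Theorem~\ref{folforms} with the roles of $\FF$ and $\GG$ interchanged gives $[\eta]\in\intr\Ck_{g}$, and Handel-Miller maximality (with $\GG$ in the role of $\FF$) gives $\Ck_{g}\sseq\Ck_{\GG}$. Since $\intr\Ck_{g}$ is open in $\hk(\wh{W})$, this containment lifts to $\intr\Ck_{g}\sseq\intr\Ck_{\GG}$, so $\left<\GG\right>\in\intr\Ck_{\GG}$.

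With the auxiliary fact in hand, the backward implication is immediate. For the forward one, assume $\left<\GG\right>\in\intr\Ck_{\FF}$, so that combined with the auxiliary fact $\left<\GG\right>\in\intr\Ck_{\FF}\cap\intr\Ck_{\GG}$. Pick a regular Handel-Miller representative $h_{\GG}$ of the monodromy isotopy class of $\GG$, available via~\cite[Theorem~12.1]{cc:hm}; it is a monodromy for the fibration $\GG$, which appropriately extends $\FF|(M\sm W)$. Maximality for $\FF$ applied to $g=h_{\GG}$ then dichotomizes, and the common point $\left<\GG\right>\in\intr\Ck_{h_{\GG}}\cap\Ck_{\FF}$ rules out the disjoint case, yielding $\Ck_{\GG}=\Ck_{h_{\GG}}\sseq\Ck_{\FF}$ (the first equality by Theorem~\ref{hmindep}). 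The reverse inclusion $\Ck_{\FF}\sseq\Ck_{\GG}$ follows by symmetric application: choose a regular Handel-Miller $h_{\FF}$ in the $\FF$-class, observe $\left<\GG\right>\in\intr\Ck_{h_{\FF}}\cap\Ck_{\GG}$, and apply maximality with $\GG$ in the role of $\FF$.

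The main technical point is to confirm that the stronger maximality statement of this subsection genuinely applies to regular Handel-Miller representatives (which need not be smooth), not only to smooth monodromies of smooth fibrations. The proof of maximality proceeds through homology directions and long, almost closed orbits, both of which only require $\LL_{g}$ of class $\COO$, which is exactly the regularity afforded by~\cite[Theorem~12.1]{cc:hm}. A secondary subtlety, the openness of $\intr\Ck_{g}$ in the direct-limit topology on $\hk(\wh{W})$, is handled by the intrinsic characterization of $\intr\Ck_{g}$ from Theorem~\ref{intr:cone} in terms of closed forms transverse to $\XX_{g}$.
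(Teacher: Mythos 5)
Your proposal is circular. Both directions of your argument invoke the maximality statement of this subsection (Theorem~\ref{max}), but in the paper Theorem~\ref{max} is proved \emph{after} Proposition~\ref{samecone} and depends on it: the proof of Theorem~\ref{max} reduces to Corollary~\ref{notinbd}, whose proof in turn applies Proposition~\ref{samecone} (via Corollary~\ref{ratdense}) to a proper foliated ray $\<\HH\>$ in $\intr\Ck_{\GG}\cap\intr\Ck_{\FF}$. So you cannot use maximality here; that is exactly the reason the paper must supply a different, independent argument at this point.

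The ingredient the paper uses, and which your proof lacks entirely, is the transfer theorem~\cite[Theorem~11.1]{cc:hm}. Given $\<\GG\>\in\intr\Ck_{\FF}$, Theorem~\ref{folforms} lets one realize $\GG$ by a foliated form transverse to $\LL_{h}$ (the Handel-Miller $1$-dimensional foliation of $\FF$). The transfer theorem then asserts that $\LL_{h}$ already induces Handel-Miller monodromy on each leaf of $\GG|W$. Consequently the two Handel-Miller cones $\Ck_{\GG}$ and $\Ck_{\FF}$ are computed from the \emph{same} core lamination $\XX_{h}$, so they coincide. This is the non-circular route: it relates the dynamics of $\GG$ directly to $\XX_{h}$ rather than trying to compare two a priori unrelated cones via a maximality principle that is not yet available. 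Your auxiliary observation that $\<\GG\>\in\intr\Ck_{\GG}$ is correct (and it is exactly what makes the converse direction ``clear''), but it can and should be established directly from Theorem~\ref{intr:cone}: a foliated form defining $\GG$ is transverse to $\XX_{h_{\GG}}$, hence its class lies in $\intr\Ck_{h_{\GG}}=\intr\Ck_{\GG}$; you do not need maximality even for that.
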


\begin{proof}
Suppose that $\<\GG\>\subset\intr\Ck_{\FF}$.
By Theorem~\ref{folforms}, we take the foliation $\GG$  to be transverse to $\LL_{h}$.  By the ``transfer theorem''~\cite[Theorem~12.5]{cc:hm}, $\LL_{h}$ induces Handel-Miller monodromy $g$ on each leaf of $\GG|W$.  A comment is needed since, in~\cite{cc:hm}, it was not required that Handel-Miller monodromy be exactly Nielsen-Thurston on $N'$. But, as the transfer theorem is obvious on the Nielsen-Thurston periodic pieces and Fried~\cite{fried} proves the transfer theorem for the Nielsen-Thurston pseudo-Anosov pieces, the discussion in Section~12 of~\cite{cc:hm}  is easily augmented to accomodate that requirement.  Thus, the cones $\Ck_{\GG}$ and $\Ck_{\FF}$ are determined by the same core lamination $\XX_{g}=\XX_{h}$ and so are identical.  For the converse, $\Ck_{\GG}=\Ck_{\FF}$, for rational foliated rays $\<\GG\>$ and $\<\FF\>$, clearly implies that $\<\GG\>\subset\intr\Ck_{\FF}$.
\end{proof}

\begin{cor}\label{notinbd}
No rational foliated ray is contained in $\bd\Ck_{\FF}$.
\end{cor}

\begin{proof}
If there is a rational foliated ray $\<\GG\>\subset\bd\Ck_{\FF}$,  then $\intr\Ck_{\GG}\cap\intr\Ck_{\FF}\ne\0$.  By Corollary~\ref{ratdense}, there is a rational foliated ray $\<\HH\>\subset\intr\Ck_{\GG}\cap\intr\Ck_{\FF}$.  By Proposition~\ref{samecone}, we see that $\Ck_{\GG}=\Ck_{\HH}=\Ck_{\FF}$.  That is, $\<\GG\>\subset\intr\Ck_{\FF}$, contrary to our hypothesis.
\end{proof}

The boundary $\bd\Ck_{\FF}$ is made up of  $r$ top faces $F_{1},\dots,F_{r}$, where $F_{i}$ is a convex, polyhedral cone with nonempty (relative) interior in the hyperplane $[ \mu_{i}]=0$.  

\begin{lemma}\label{denseinFi}
Each $F_{i}$ contains a dense family of rays that meet nontrivial points of the integer lattice $\hk(\wh W;\Z)$.  
\end{lemma}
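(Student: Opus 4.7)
The plan is to exploit the direct limit structure $\hk(\wh{W})=\ura{\lim}\,\hk(K_{m}^{\o})$ in order to reduce the density question to a finite-dimensional rational polyhedral cone. Given any $[\eta_{0}]\in F_{i}$ and any open neighborhood $U$ of $[\eta_{0}]$ in $\hk(\wh{W})$, I would first choose $m$ large enough that $[\eta_{0}]\in\hk(K_{m}^{\o})$ and that all of the compact minimal loops $\gamma_{1},\dots,\gamma_{r}$ furnished by Theorem~\ref{hull} represent classes in $H_{1}(K_{m};\Z)$. Both are achievable because every element of $\hk(\wh{W})$ lies in some $\hk(K_{m}^{\o})$ and each $\gamma_{j}$ is a compact cycle.

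Inside the finite-dimensional space $\hk(K_{m}^{\o})$, the slice
\[
F_{i}^{(m)}=F_{i}\cap\hk(K_{m}^{\o})=\{[\eta]\in\hk(K_{m}^{\o})\mid [\eta]([\gamma_{i}])=0,\ [\eta]([\gamma_{j}])\ge0\text{ for }j\ne i\}
\]
is cut out by the evaluation functionals against the classes $[\gamma_{j}]\in H_{1}(K_{m};\Z)$, each of which takes integer values on the full lattice $G_{m}=\hk(K_{m}^{\o})\cap H^{1}(K_{m};\Z)$ supplied by Lemma~\ref{fullattice}. Hence $F_{i}^{(m)}$ is a rational polyhedral cone in the lattice sense, and by the rational Minkowski--Weyl structure theorem it is the non-negative convex hull of finitely many rational rays through $G_{m}$. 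Since $[\eta_{0}]$ is a non-negative real combination of these rational generators, approximating the coefficients by non-negative rationals produces rational rays in $F_{i}^{(m)}\sseq F_{i}$ converging to $[\eta_{0}]$ in the unique topology on the finite-dimensional space $\hk(K_{m}^{\o})$.

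Because the inclusion $\hk(K_{m}^{\o})\hra\hk(\wh{W})$ is continuous by construction of the direct limit topology, these approximating rational rays also converge to $[\eta_{0}]$ in $\hk(\wh{W})$ and so eventually lie in $U$, establishing the claimed density. The only substantive point (rather than a serious obstacle) is the verification that $F_{i}^{(m)}$ is a rational polyhedral cone with respect to $G_{m}$; this is automatic because Theorem~\ref{hull} lets us take the defining half-spaces of $F_{i}$ to be those cut out by integer homology classes---the minimal loops themselves---so rationality passes down to every sufficiently large finite-dimensional slice for free.
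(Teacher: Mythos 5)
Your argument is correct and is essentially the paper's proof, fleshed out: you reduce via the direct limit $\hk(\wh W)=\ura{\lim}\hk(K_m^\o)$ to the finite-dimensional case, observe that the face $F_i^{(m)}$ is a rational polyhedral cone because the defining classes $[\gamma_j]$ are integral and $G_m$ is a full lattice (Lemma~\ref{fullattice}), invoke the standard density of rational rays there, and pass back through the continuous inclusion. The paper's own proof compresses exactly these steps into two sentences, so the content agrees.
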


\begin{proof}
Since $[ \mu_{i}]$ is an integral homology class, it takes integer values on the integer lattice.  The corresponding claim is standard in the finite dimensional spaces $\hk(K_{j}^{\circ})$ and remains true in the direct limit by the definition of the weak topology.
\end{proof}

\begin{theorem}\label{max}
If $g$ is a monodromy map \upn{(}endperiodic\upn{)} for a fibration $\GG$ of $W$, then either $(\intr\Ck_{g})\cap\Ck_{\FF}=\0$, or $\Ck_{g}\subseteq\Ck_{\FF}$.  In particular, $\Ck_{\FF}=\Ck_{h}$ is the maximal foliation cone for monodromies in the isotopy class of $h$  and two maximal foliation cones either coincide or have disjoint interiors.
\end{theorem}

\begin{proof}
If $(\intr\Ck_{g})\cap\Ck_{\FF}\ne\0$ and $\Ck_{g}\not\subseteq\Ck_{\FF}$, then Lemma~\ref{denseinFi} implies that there is a rational foliated ray in $\bd\Ck_{\FF}$, contradicting Corollary~\ref{notinbd}.
\end{proof}

\begin{rem}
Correspondingly, the dual homology cone $\C'_{\FF}=\C'_{h}$ is the minimal $\C'_{g}$ for all monodromies $g$ isotopic to $h$.  In this sense, we can say that the tight Handel-Miller monodromy has the ``tightest'' dynamics in its isotopy class.
\end{rem}

\subsubsection{Finiteness of the set of Handel-Miller  foliation cones}\label{conefinite} 

The nucleus $K_{0}$ of the octopus decomposition of $\wh W$ cuts off finitely many arms which are of the form $B\x I$, where $B\subset \tb \wh W$ is connected and noncompact.  All of our foliations $\GG$ that appropriately extend $\FF|(M\sm W)$, when restricted to an arm, are transverse to the $I$ fibers.  Thus,  for all the appropriate foliations $\GG$ with Handel-Miller monodromy $g$, the core lamination $\XX_{g}$ lies in $K_{0}$.  The cone $\C_{\GG}^{\kappa}$ is defined by  inequalities $[\gamma_{i}]\ge0$, $1\le i\le r$, where each $\gamma_{i}$ is  a periodic orbit in $\XX_{g}$.  These same inequalities define the cone $\C_{\GG|K_{0}}\subset H^{1}(K_{0})$ determined by the depth one foliation $\GG|K_{0}$ of the sutured manifold $K_{0}$.  This sets up a one-to-one correspondence between the set $\KK_{W}$ of Handel-Miller foliation cones in $\hk(\wh{W})$ and a subset of the set of Handel-Miller foliation cones in $H^{1}(K_{0})$. By~\cite[Theorem~6.4]{cc:cone}, it follows that there are only finitely many of these cones.

The proof of Theorem~\ref{conesmooth}  is complete.

\section{Foliations of class $\COO$}\label{HMcones1}

In this section, we prove Theorem~\ref{cone}. We relax the smoothness condition on the foliation $\FF$ of $M$ to $\COO$ and analyze the cones $\C_{i}\subset\hi(\wh W)$.

The isomorphism $\hI(\wh W)=\ula{\lim}\,\hI(K_{i})$ is induced by the natural homomorphisms $\psi_{i}:\hI(\wh{W})\to\hI(K_{i})$. Recall from Subsection~\ref{tops} that the topology on $\hI(\wh{W})$ is the standard inverse limit topology, relativized from the Tychonov topology via the inclusion $$\ula{\lim}\hI(K_{i})\subset\hI(K_{0})\x\hI(K_{1})\x\cdots\x\hI(K_{i})\x\cdots.$$  Since $\psi_{i}$ is induced by projection of the product onto its $i$th factor, it is continuous.

Let $V_{i}\subset\hI(K_{i})$ denote the image of $\psi_{i}$ and remark that $\hI(\wh{W})=\ula{\lim}V_{i}$. Also note that, since $\hI(\wh{W})=\hI(\wh{W};\Z)\otimes\R$, the integer lattice $\hI(\wh{W};\Z)$ is carried onto a full lattice subgroup of $V_{i}$, $i\ge0$. Indeed, it is carried onto $V_{i}\cap\hI(K_{i};\Z)$ and this must be a full lattice subgroup since $\psi_{i}$ surjects onto $V_{i}$.  Recall that the rays issuing from the origin in $\hI(\wh{W})$ that meet nonzero integer lattice points are called  rational rays.

\begin{lemma}\label{uniondense}
The union of rational rays in $\hI(\wh{W})$ is everywhere dense.
\end{lemma}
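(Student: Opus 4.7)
The plan is to reduce density in the infinite-dimensional inverse limit $\hI(\wh W)$ to density of rational rays in the finite-dimensional image $V_i$, which is standard once a full integer lattice is in hand.

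First I would analyze basic open neighborhoods in the inverse limit topology. Since $\hI(\wh W)$ carries the topology induced from the Tychonov topology via $\hI(\wh W) \ss \prod_{j}\hI(K_j)$, a basic open neighborhood $\UU$ of a class $[\omega]\in\hI(\wh W)$ is determined by finitely many constraints: there exist indices $i_1<\cdots<i_n$ and open sets $U_{i_j}\ss\hI(K_{i_j})$ containing $\psi_{i_j}([\omega])$ such that $[\alpha]\in\UU$ iff $\psi_{i_j}([\alpha])\in U_{i_j}$ for $j=1,\ldots,n$.  Because the restriction maps $r_{i_n,i_j}\colon\hI(K_{i_n})\to\hI(K_{i_j})$ are continuous linear maps and compatibility in the inverse limit forces $\psi_{i_j}([\alpha])=r_{i_n,i_j}(\psi_{i_n}([\alpha]))$, these $n$ constraints consolidate into a single open condition $\psi_i([\alpha])\in U'$, where $i=i_n$ and $U' = U_i\cap\bigcap_{j<n}r_{i,i_j}^{-1}(U_{i_j})$ is an open neighborhood of $\psi_i([\omega])$ in $\hI(K_i)$.

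Second I would exploit the finite-dimensional lattice structure on $V_i$.  By the paragraph preceding the lemma, $V_i\ss\hI(K_i)$ is finite dimensional and $V_i\cap\hI(K_i;\Z)$ is a full lattice subgroup of $V_i$. In any finite-dimensional real vector space endowed with a full lattice, the rays through nonzero lattice points are dense (approximate coordinates in a lattice basis by rationals $p_k/N$ and take the ray through $\sum_k p_k e_k$). Since $V_i\cap U'$ is a nonempty open subset of $V_i$ (it contains $\psi_i([\omega])$), it therefore contains a point of the form $t[\beta]$ with $[\beta]\in V_i\cap\hI(K_i;\Z)$ nonzero and $t>0$.

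Finally I would lift this approximation back to $\hI(\wh W)$. Since $V_i\cap\hI(K_i;\Z)$ is, as recorded just before the lemma, the $\psi_i$-image of $\hI(\wh W;\Z)$, write $[\beta]=\psi_i([\alpha])$ for some nonzero $[\alpha]\in\hI(\wh W;\Z)$. Then $t[\alpha]$ lies on the rational ray $\R^{+}[\alpha]\ss\hI(\wh W)$ and $\psi_i(t[\alpha])=t[\beta]\in U'$, so $t[\alpha]\in\UU$, as required. The only genuine obstacle is the bookkeeping needed to collapse the Tychonov basic neighborhood to the single coordinate $i=i_n$; once that is done the lemma is a direct consequence of the standard density of rational rays in a finite-dimensional space together with the already-recorded surjection $\psi_i\colon\hI(\wh W;\Z)\to V_i\cap\hI(K_i;\Z)$.
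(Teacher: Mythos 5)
Your proof is correct and follows essentially the same route as the paper's: both collapse the finitely many constraints of a basic Tychonov-type open set to a single condition in the last finite-dimensional coordinate (you via pullbacks of the restriction maps $r_{i,i_j}$ into $\hI(K_i)$, the paper via the surjections $\theta_j\colon V_n\to V_j$), both invoke density of rational rays in the finite-dimensional $V_i$ using the full lattice $V_i\cap\hI(K_i;\Z)$, and both lift the chosen lattice ray back along the surjection $\psi_i\colon\hI(\wh W;\Z)\to V_i\cap\hI(K_i;\Z)$. The differences are purely notational.
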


\begin{proof}
The open sets in $\hI(\wh{W})=\ula{\lim}V_{i}$ are unions of sets of the form $$U=\ula{\lim}V_{i}\cap (U_{0}\x U_{1}\x\cdots U_{n}\x V_{n+1}\x V_{n+2}\x\cdots),$$  where $U_{i}\subseteq V_{i}$ is open, $0\le i\le n$.  If this set is nonempty, we must prove that it meets a rational ray.  Let $\theta_{i}:V_{n}\to V_{i}$ be the natural surjection, $0\le i<n$.  Then the set
$$
Y=U_{n}\cap\theta_{n-1}^{-1}(U_{n-1})\cap\cdots\cap\theta_{0}^{-1}(U_{0})
$$ is open and the assumption that $U\ne\0$ implies that $Y\ne\0$.  Since $V_{n}$ is finite dimensional, the rational rays do have dense union there, hence we select such a ray $\rho$ that meets $Y$.  There is a rational ray $\rho'$ in $\hI(\wh{W})$ which is mapped onto $\rho$ by $\psi_{n}$.  Viewing $\rho'$ in $\ula{\lim}V_{i}$, one sees that it meets $U$. 
\end{proof}

Let $ \zeta\in H_{1}(\wh{W})$.  This class is represented by a singular cycle which necessarily lives in $K_{k}$, for large enough $k\ge0$. Then $\zeta:\hI(K_{k})\to\R$ is a continuous linear functional. We can define $\zeta:\hI(\wh{W})\to\R$ by the composition
$$
\hI(\wh{W})\xra{\psi_{k}}\hI(K_{k})\xra{\zeta}\R,
$$
remarking that this is independent of the choice of large enough $k$ and defines a continuous linear functional.

We assume $\FF$ is only of class $\COO$.    
The reader will note that the lack of smoothness is only for the foliations in $M$, the foliation in each $\wh{W}$ being itself of class $\Ci$. 
We consider $\FF|\wh W$.  While this foliation may never trivialize outside any $K_{i}$, $\FF|K_{i}$ is a depth one foliation and our previous discussion can be applied to it. Note that, since we are working in the compact manifold $K_{i}$ rather  than $K_{i}^{\circ}$, all differential forms are compactly supported. We can assume that the arms are the connected components of $\wh{W}\sm K_{0}^{\circ}$, hence that $\FF$ is transverse to the interval fibers in these arms.  

The above discussion works in dimensions $\ge3$ without restrictions on the topology of leaves.  We now require $\dim M=3$ and that semiproper leaves satisfy the assumption in Subsection~\ref{restrict}. Handel-Miller theory~\cite{cc:hm} does not apply directly to the foliations of $\wh W$ that are fibrations extending the foliation $\FF|(M\sm W)$.  Indeed, such a foliation may be nontrivial outside every $K_{i}$, hence  the monodromy will not be endperiodic. It will be necessary to work in the progressively expanding sutured manifolds $K_{i}$, in each of which the endperiodic theory works fine, in order to produce the Handel-Miller foliation cones in $\hI(\wh W)=\ula{\lim}\,\hI(K_{i})$. We only need note that, by the assumption that no semiproper leaf is contractible or homotopic to the circle, we can guarantee that each component of $\tb K_{i}$ has negative Euler characteristic.

We can choose the monodromy $h$ so that, in the arms,  it is given by the flow along these interval fibers and is tight (and smooth) Handel-Miller monodromy for $\FF|K_{0}$, hence also for $\FF|K_{i}$, $i\ge0$.  (One should note that a leaf of $\FF$ may intersect $K_{i}$ in a finite family of leaves of $\FF|K_{i}$, but this really doesn't matter.)  Thus, the core lamination $\XX_{h}$ of $\LL_{h}|K_{i}$ lives in $K_{0}$ and is independent of $i$.  Recall from Theorem~\ref{finitegen} and Corollary~\ref{conepoly} that this lamination gives a set $[ \mu_{1}],[ \mu_{2}],\dots,[ \mu_{r}]\in H_{1}(\wh{W})$, hence the linear inequalities $[ \mu_{i}]\ge0$ define a polyhedral cone in $\hI(\wh{W})$, the  cone of foliations we have designated by $\C_{h}$ and now designate by $\C_{\FF}$.

\subsection{The proof of Theroem~\ref{cone}}

We showed in Theorem~\ref{hmindep} that the Handel-Miller  cones $\C'_{\FF}\subset H_{1}(\wh{W})$ are well defined. It follows that the Handel-Miller foliation cones $\C_{\FF}\subset\hI(\wh{W})$ are well defined.   By Corollary~\ref{conepoly}, the Handel-Miller foliation cones are closed, convex, and polyhedral with  finitely many faces of codimension one.

Suppose $\wh\HH$ is an admissible foliation of $\wh W$ and $\wh\LL$ is a stongly transverse one-dimensional foliation. Then, by Theorem~\ref{folforms}, the open cone $\intr\C_{\XX}\subset\hI(\wh W)$ consists of foliated forms. Thus, there exists a foliation of relative depth one in $\C_{\XX}$ with monodromy $f$. Therefore, $\LL = \LL_{f}$ and $\C_{\XX} = \C_{f}$ which, by Theorem~\ref{max0} is contained in some Handel-Miller foliation cone $\C_{\FF}$. Thus, $\wh\HH\in\C_{\FF}$. Thus, the $\CO$ isotopy classes of admissible foliations $\wh\HH$ of $\wh W$ that are  trivial in the arms are in natural  correspondence with the rays out of the origin in the interiors of the Handel-Miller foliation cones. We will prove that this correspondance is one-to-one in~\cite{cc:isorel}.

To prove Theroem~\ref{cone}, it remains to show that the Handel-Miller foliation cones have disjoint interiors, which we prove in Theorem~\ref{max0},  and that the Handel-Miller foliation cones are finite in number, which  follows as before from the finiteness of the cones for $K_{0}$.

We have seen that $\intr\C_{\FF}$ consists of classes of foliated forms.  Any such form in the integer lattice restricts to an element of the integer lattice in each $\hI(K_{i})$,  hence has as period group a subgroup of $\Z$.  Thus the rational rays in $\intr\C_{\FF}$  correspond to  fibrations of $W$.

\begin{prop}\label{samecone'}
A rational foliated ray $\<\GG\>$ lies in $\intr\C_{\FF}$ if and only if $\C_{\GG}=\C_{\FF}$.
\end{prop}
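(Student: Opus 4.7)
The plan is to adapt the argument of Proposition~\ref{samecone} to the $\COO$ setting, where the cones live in $\hI(\wh W)$ rather than $\hk(\wh W)$. The essential feature that makes the adaptation work is that the core lamination $\XX_h$ sits inside the compact nucleus $K_0$ and is independent of $i$, so the dynamical input to both cones comes from a region where the $\Cii$ theory of~\cite{cc:hm} applies verbatim, even though $\FF$ itself is only of class $\COO$ globally.

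For the forward direction, suppose $\<\GG\>\ss\intr\C_\FF$. By Theorem~\ref{folforms} I would represent the class by a foliated form transverse to $\LL_h$; thus $\LL_h$ serves as a $1$-dimensional foliation transverse to $\GG$ as well. On each compact sutured piece $K_i$, both $\FF|K_i$ and $\GG|K_i$ are smooth depth-one foliations, and the first-return map $g$ along $\LL_h$ on a leaf of $\GG|K_i$ is endperiodic. Applying the transfer theorem~\cite[Theorem~11.1]{cc:hm} (augmented, as in the proof of Proposition~\ref{samecone}, to respect the Nielsen-Thurston requirement on pseudo-Anosov pieces) to each $K_i$ then shows that $g$ is a Handel-Miller representative. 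Its core lamination $\XX_g$, being the $\LL_h$-saturation of the corresponding invariant set inside $K_0$, coincides with $\XX_h$. Since $\C_\GG$ and $\C_\FF$ are determined by the same minimal loops in this common core lamination (Theorem~\ref{hull}), we conclude $\C_\GG=\C_\FF$.

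For the converse, a proper foliated ray always lies in the interior of its own foliation cone: $\GG$ comes equipped with a transverse $1$-dimensional foliation $\LL_g$ whose core lamination $\XX_g$ is transverse to the foliated form representing $\<\GG\>$, and Theorem~\ref{intr:cone} places such a class in $\intr\C_\GG$. Hence $\C_\GG=\C_\FF$ immediately yields $\<\GG\>\ss\intr\C_\FF$.

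The main obstacle is the noncompactness phenomenon peculiar to the $\COO$ case: the monodromy of $\GG$ on all of $W$ need not be endperiodic, since $\GG$ may fail to trivialize in the arms. The remedy is the localization strategy already set up in this section, namely to run the Handel-Miller and transfer arguments only on the compact sutured manifolds $K_i$, relying on the fact that the core lamination remains confined to $K_0$ and stable as $i$ grows. Once this localization is in place, the rest of the argument copies the $\Cii$ case almost verbatim, with $\hI(\wh W)$ playing the role of $\hk(\wh W)$.
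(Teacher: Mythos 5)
Your proposal matches the paper's proof in all essential respects: you localize to the compact sutured pieces $K_i$, invoke Theorem~\ref{folforms} to make $\GG$ transverse to $\LL_h$, apply the transfer theorem on each $K_i$ (with the same Nielsen--Thurston augmentation flagged in Proposition~\ref{samecone}) to conclude that the core lamination, confined to $K_0$, is unchanged, and hence that the same minimal loops define both cones. The converse you spell out via Theorem~\ref{intr:cone} is exactly the step the paper dismisses as trivial, so the argument is correct and takes the same route.
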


\begin{proof}
Suppose that $\<\GG\>\subset\intr\C_{\FF}$. By Theorem~\ref{folforms}, we choose $\GG$ transverse to $\LL_{h}$. 
The proof of Proposition~\ref{samecone} carries through for $\<\GG|K_{i}\>$ and $\Ck_{\FF|K_{i}}$, $0\le i<\infty$, showing that  $\LL_{h}$ induces Handel-Miller monodromy for  $\GG|K_{i}$ also, $0\le i<\infty$.  In particular, the same family $\mu_{1},\dots,\mu_{r}$ defines the Handel-Miller foliation cone for both $\FF|K_{i}$ and $\GG|K_{i}$, $0\le i<\infty$, hence  $\C_{\FF}=\C_{\GG}$.  Again, the converse is trivial. 
\end{proof}

\begin{cor}\label{notinbd'}
No rational foliated ray is contained in $\bd\C_{\FF}$.
\end{cor}

\begin{proof}

This follows from Lemma~\ref{uniondense} and Proposition~\ref{samecone'} exactly as Corollary~\ref{notinbd} follows from Corollary~\ref{ratdense} and Proposition~\ref{samecone}.
\end{proof}

The boundary $\bd\C_{\FF}$ is made up of  $r$ top faces $F_{1},\dots,F_{r}$, where $F_{i}$ is a convex, polyhedral cone with nonempty (relative) interior in the hyperplane $[ \mu_{i}]=0$.

\begin{lemma}\label{denseinFi0}
Each $F_{i}$ contains a dense family of rays that meet nontrivial points of the integer lattice $\hI(\wh W;\Z)$.  
\end{lemma}

\begin{proof}

This is proven analogously to Corollary~\ref{uniondense}. Simply add the constraint $[ \mu_{i}]=0$.
\end{proof}

\begin{theorem}\label{max0}
If $g$ is a monodromy map \upn{(}endperiodic\upn{)} for a fibration $\GG$ of $W$, then either $(\intr\C_{g})\cap\C_{\FF}=\0$, or $\C_{g}\subseteq\C_{\FF}$.  In particular, $\C_{\FF}=\C_{h}$ is the maximal foliation cone for monodromies in the isotopy class of $h$  and two maximal foliation cones either coincide or have disjoint interiors.
\end{theorem}

\begin{proof}
If $(\intr\C_{g})\cap\C_{\FF}\ne\0$ and $\C_{g}\not\subseteq\C_{\FF}$, then Lemma~\ref{denseinFi0} implies that there is a rational foliated ray in $\bd\C_{\FF}$, contradicting Corollary~\ref{notinbd'}.
\end{proof}

\bibliographystyle{amsplain}

\end{document}